\newtheorem*{thm*}{Theorem}
\newtheorem{thm}{Theorem}[section]
\newtheorem{lem}[thm]{Lemma}
\newtheorem{prop}[thm]{Proposition}
\newtheorem{cor}[thm]{Corollary}
\theoremstyle{definition}
\newtheorem{defn}[thm]{Definition}
\theoremstyle{remark}
\newtheorem{rem}[thm]{Remark}
\numberwithin{equation}{section}
\def \Q {\textbf{Q}}
\def \wh {\widehat}
\def \to {\rightarrow}
\def \id {\text{id}}
\def \d {\delta}
\def \l {\lambda}
\def \I {\mathcal{I}}
\def \W {\I(S)\vee\I(T)}
\def \Z {\mathcal{Z}}
\newcommand*\normm[1]{\left|\mspace{-1mu}\left|\mspace{-1mu}\left|#1\right|\mspace{-1mu}\right|\mspace{-1mu}\right| }
\begin{document}
\title
[Pointwise averages for systems with two commuting transformations]{Pointwise Multiple averages for systems with two commuting transformations}
\author{Sebasti\'an Donoso}
\address{Center for Mathematical Modeling, University of Chile, Beauchef 851, Santiago, Chile.} \email{sdonoso@dim.uchile.cl}

\author{Wenbo Sun}
\address{Department of Mathematics, Northwestern University, 2033
Sheridan Road Evanston, IL 60208-2730, USA}
 \email{swenbo@math.northwestern.edu}

\subjclass[2010]{Primary: 37A30 ; Secondary: 54H20} \keywords{Pointwise convergence, multiple averages, topological models}

\thanks{The first author is supported by grants Basal-CMM. The second author is partially supported by NSF grant 1200971. The second author thanks the hospitality of University of Chile where this research was started, and the first author thanks the hospitality of Northwestern University where this research was finished.}

\begin{abstract}

We show that for every ergodic measure preserving system $(X,\mathcal{X},\mu,S,T)$ with commuting transformations $S$ and $T$, the average 

\[\frac{1}{N^3} \sum_{i,j,k=0}^{N-1} f_0(S^{j}T^{k}x)f_1(S^{i+j}T^{k}x)f_2(S^{j}T^{i+k}x)\] 
converges for $\mu$-a.e. $x\in X$ as $N\to \infty$ for all $f_0,f_1, f_2\in L^{\infty}(\mu)$. We also show that if $(X,\mathcal{X},\mu,S,T)$ is an ergodic measurable distal system, then the average \[ \frac{1}{N}\sum_{i=0}^{N-1} f_1(S^ix)f_2(T^ix) \] 
converges for $\mu$-a.e. $x\in X$ as $N\to \infty$ for all $f_1,f_2\in L^{\infty}(\mu)$.
\end{abstract}

\maketitle

\section{Introduction}
The convergence of multiple ergodic averages is a widely studied question in ergodic theory. The question is to know whether the average 
\begin{equation}\label{11111}
 \frac{1}{N}\sum_{i=0}^{N-1} f_1(T_1^ix)f_2(T_2^ix)\cdots f_d(T_d^i x)
 \end{equation}
converges as $N\to \infty$ for bounded functions $f_1,\ldots,f_d$, where $(X,\mathcal{X},\mu)$ is a probability space and $T_1,\ldots,T_d$ are measure preserving transformations of $X$ (we refer to $(X,\mathcal{X},\mu,$ $T_1,\ldots,T_d)$ as a \emph{system}). In the $L^2$ setting, this problem has a long history and satisfactory answers have been given up to now \cite{Aus,HK05,H,Tao}. The first breakthrough was done by Host and Kra \cite{HK05}, where they derived the $L^2$ convergence of
\begin{equation} \label{HKmultiple}
 \frac{1}{N}\sum_{i=0}^{N-1} f_1(T^ix)f_2(T^{2i}x)\cdots f_d(T^{di} x)
 \end{equation}
 as a consequence of a celebrated structure theorem for measure preserving systems with a single transformation.
The most general result was given by Walsh \cite{Walsh}, where he proved that (\ref{11111}) (and more general expressions) converges in the $L^2$ setting when $T_1,\ldots,T_d$ span a nilpotent group. 

In the pointwise setting, the situation is completely different: very few results are known. The most remarkable ones are those by Bourgain \cite{Bo}, where he proved the pointwise convergence of 
$ \frac{1}{N} \sum_{i=0}^{N-1} f_1(T^{ai}x)f_2(T^{bi}x)$ ($a,b\in \mathbb{Z}$), and that of Huang, Shao and Ye \cite{HSY}, who proved the convergence for the average \eqref{HKmultiple} in a measurable distal system. Their proof presents an original application of topological models to prove pointwise convergence results. 

In this article, we push forward this technique to the case of two commuting transformations, continuing the program started in \cite{DS}. We prove:

\begin{thm} \label{THM:1}
Let $(X,\mathcal{X},\mu,S,T)$ be an ergodic system with commuting transformations $S$ and $T$ ({ i.e.} $ST=TS$). Then the average
\[\frac{1}{N^3} \sum_{i,j,k=0}^{N-1} f_0(S^jT^kx)f_1(S^{i+j}T^k x)f_2(S^j T^{i+k}x) \]
converges for $\mu$-a.e. $x\in X$ as $N\to \infty$ for all $f_0,f_1,f_2\in L^{\infty}(\mu)$.
\end{thm}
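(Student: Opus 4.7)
The starting point is the algebraic rearrangement. Setting $y=S^jT^kx$ we have
\[
A_N(x) \;=\; \frac{1}{N^2}\sum_{j,k=0}^{N-1} (f_0\Phi_N)(S^jT^kx),\qquad \Phi_N(y) := \frac{1}{N}\sum_{i=0}^{N-1} f_1(S^iy)f_2(T^iy).
\]
So $A_N$ is a $2$-dimensional $\mathbb{Z}^2$-Birkhoff average of the $N$-dependent observable $f_0\Phi_N$, and the problem splits into two subproblems: (i) pointwise control of the inner diagonal average $\Phi_N$, and (ii) combining such control with the Tempelman pointwise ergodic theorem for the ergodic $\mathbb{Z}^2$-action generated by $S$ and $T$.

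Subproblem (ii) is the easier half. If $\Phi_N\to \Phi$ a.e.\ for some bounded $\Phi$, Egorov's theorem produces, for any $\delta>0$, a set $E$ with $\mu(E)>1-\delta$ on which the convergence is uniform. Splitting the outer sum over $(j,k)$ according to whether $S^jT^kx\in E$ or not, the $E$-contribution is bounded by $\|f_0\|_\infty \sup_E|\Phi_N-\Phi|$, which tends to $0$, while the $E^c$-contribution is dominated by $4\|f_0\|_\infty\|f_1\|_\infty\|f_2\|_\infty$ times the density of visits of the $\mathbb{Z}^2$-orbit of $x$ to $E^c$, which by Tempelman's theorem tends a.e.\ to $\mu(E^c)<\delta$. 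Letting $\delta\downarrow 0$ then yields $A_N(x)\to \int f_0\Phi\,d\mu$ a.e.

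Subproblem (i) is the heart of the matter: on a general ergodic system, pointwise convergence of $\Phi_N$ is the open Conze--Lesigne-type problem. The plan is to identify a \emph{distal} characteristic factor $\pi\colon X\to Y$ for $A_N$, using Host's $L^2$-seminorm machinery for commuting transformations: the $L^2$-limit of $A_N$ is controlled by the projections of $f_0,f_1,f_2$ onto a structured (pronilpotent, hence distal) factor of the $\mathbb{Z}^2$-action. On $Y$ the second theorem of the abstract (pointwise convergence of $N^{-1}\sum f_1(S^ix)f_2(T^ix)$ in distal systems) applies, giving the required pointwise convergence of $\Phi_N$, after which subproblem (ii) closes the argument.

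The main obstacle is passing from an $L^2$ characteristic-factor statement to a pointwise one, since orthogonality cannot be invoked directly at the pointwise level. The plan here is to follow the topological-model strategy initiated by Huang--Shao--Ye and developed in [DS]: realize the ergodic $\mathbb{Z}^2$-system as a continuous action on a compact metric space via a Jewett--Krieger-type theorem for $\mathbb{Z}^2$, build a topological model of the characteristic factor in which $f_0,f_1,f_2$ correspond to continuous functions, and use a topological van der Corput inequality together with the $2$-dimensional pointwise ergodic theorem to show that $L^2$-orthogonal contributions to $A_N$ vanish \emph{pointwise}. Making this lifting step quantitative is the technical crux of the argument.
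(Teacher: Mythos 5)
Your algebraic rearrangement and the Egorov/Tempelman argument for subproblem~(ii) are sound: if $\Phi_N$ converged $\mu$-a.e.\ to some bounded $\Phi$, then the outer $\mathbb{Z}^2$-Birkhoff average of $f_0\Phi_N$ would indeed converge a.e.\ to $\int f_0\Phi\,d\mu$. The genuine gap is in subproblem~(i), and it is not merely a technical lifting issue. You propose to reduce the pointwise convergence of $\Phi_N(y)=\tfrac{1}{N}\sum_{i<N}f_1(S^iy)f_2(T^iy)$ to a ``structured (pronilpotent, hence distal) characteristic factor'' of the $\mathbb{Z}^2$-action. No such factor exists for two general commuting transformations: there is no Host--Kra-type structure theorem here, and the $L^2$ characteristic factors that do control this average are $\mathcal{Z}_{S,R}$ and $\mathcal{Z}_{T,R}$, which by Lemma~2.2 are (measure-theoretic) products $X_S\times X_R$ and $X_T\times X_R$ of ergodic factors of $X$. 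These product systems are not distal in general, so Theorem~1.2 does not apply to them, and your intended use of the distal result to settle subproblem~(i) breaks down. This is not a fixable detail: the pointwise convergence of $\Phi_N$ for general ergodic $(X,\mu,S,T)$ remains open, which is precisely why the theorem as stated averages over the extra parameters $j,k$.

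The paper's route is structurally different and never passes through $\Phi_N$. The triple sum $\tfrac{1}{N^3}\sum_{i,j,k}f_0(S^jT^kx)f_1(S^{i+j}T^kx)f_2(S^jT^{i+k}x)$ is read as the Birkhoff average of $f_0\otimes f_1\otimes f_2$ along the orbit of $(x,x,x)$ under the $\mathbb{Z}^3$-action $H_{S,T}=\langle \mathrm{id}\times S\times T,\ S\times S\times S,\ T\times T\times T\rangle$ on $N_{S,T}(X)=\overline{\{(y,S^iy,T^iy)\}}$. The whole technical effort (triple magic extension, Lemma~3.11, Theorem~3.3) is devoted to building a topological model of an extension of $X$ on which $(N_{S,T}(X),H_{S,T})$ is strictly ergodic. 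For continuous $f_0,f_1,f_2$, unique ergodicity then gives convergence of the triple average at \emph{every} point, with no characteristic-factor reduction needed; for bounded measurable functions, one approximates in $L^1$ and uses exactly the kind of outer pointwise ergodic theorem control you invoke in subproblem~(ii). The extra $j,k$ averaging is thus what makes the problem tractable --- it turns the expression into a genuine multi-parameter ergodic average on a single auxiliary system --- and your decomposition, by isolating the $i$-sum first, discards precisely the mechanism that the proof relies on.
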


In the distal case (see Section \ref{Sec:MultipleAverage} for definitions), we have: 
\begin{thm} \label{THM:2}
Let $(X,\mathcal{X},\mu,S,T)$ be an ergodic distal system with commuting transformations $S$ and $T$. Then the average
\[\frac{1}{N} \sum_{i=0}^{N-1} f_1(S^{i}x)f_2(T^{i}x) \]
converges for $\mu$-a.e. $x\in X$ as $N\to \infty$ for all  $f_1,f_2\in L^{\infty}(\mu)$.
\end{thm}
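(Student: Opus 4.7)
The plan is to combine Theorem \ref{THM:1} with the Furstenberg structure theorem for distal $\mathbb{Z}^2$-systems and the topological model techniques of \cite{HSY,DS}. First I would replace $(X,\mathcal{X},\mu,S,T)$ by a topological model in which $X$ is a compact metric space, $S$ and $T$ are commuting homeomorphisms, $\mu$ is a fully supported Borel probability measure, and the $\mathbb{Z}^2$-action is topologically distal. Such a model exists by standard realization theorems, and the topological distality is crucial: it makes orbit closures minimal, forces equicontinuity on fibers of isometric extensions, and permits pointwise arguments unavailable in the purely measurable setting.

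Next I would invoke the Furstenberg--Zimmer theorem for ergodic distal $\mathbb{Z}^2$-systems, presenting $X$ as an inverse limit of a (transfinite) tower of isometric extensions $\{\text{pt}\}=X_0\leftarrow X_1\leftarrow\cdots\leftarrow X_\eta=X$. By a routine $L^\infty$-approximation, it suffices to prove pointwise convergence when $f_1,f_2$ are continuous and measurable with respect to some level $X_\alpha$, so I would induct transfinitely on $\alpha$. In the base case, $X_1$ is a compact abelian group (the Kronecker factor) on which $S$ and $T$ act by translations $x\mapsto x+a$ and $x\mapsto x+b$; the average becomes $\frac{1}{N}\sum_{i=0}^{N-1}(f_1\otimes f_2)(x+ia,\,x+ib)$, a single ergodic average on $X_1\times X_1$ along the orbit of $(x,x)$ under translation by $(a,b)$, which converges at every $x$ by unique ergodicity on the orbit closure.

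The inductive step is where Theorem \ref{THM:1} enters. Setting $y=S^jT^kx$ and $A_N(y):=\frac{1}{N}\sum_{i=0}^{N-1}f_1(S^iy)f_2(T^iy)$, Theorem \ref{THM:1} gives pointwise convergence of
\[
\frac{1}{N^2}\sum_{j,k=0}^{N-1} f_0(S^jT^kx)\,A_N(S^jT^kx)
\]
for every $f_0\in L^\infty(\mu)$. Letting $f_0$ range over a countable dense family in $C(X)$ and exploiting the equicontinuity on fibers of the isometric extension $X_{\alpha+1}\to X_\alpha$, together with the inductive hypothesis applied on $X_\alpha$, one aims to conclude that $A_N(x)$ itself converges for $\mu$-a.e.\ $x$. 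Along the way it is natural to replace the model by one that is strictly (or uniquely) ergodic for the $\mathbb{Z}^2$-action, so that the Theorem \ref{THM:1} limit can be identified as a continuous function of $x$.

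The main obstacle will be precisely this inductive step: promoting the spatially averaged convergence produced by Theorem \ref{THM:1} to honest pointwise convergence of $A_N(x)$. Here I expect to need a Van der Corput / Cauchy--Schwarz argument along the fiber group of the isometric extension, combined with a decomposition of the defining cocycle into a Kronecker-like component (on which the average is handled by the base case extended to a rotation on $X_\alpha\times G$) and an orthogonal part whose spatial average vanishes. Closing this dichotomy using distality and the equicontinuous structure of the tower, in the spirit of \cite{HSY,DS}, is the technical heart of the argument.
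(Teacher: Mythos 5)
Your high-level plan matches the paper's: pass to a topological model, present the distal system as an inverse limit of a tower of isometric extensions, settle a base case, and lift pointwise convergence inductively through the isometric steps with Theorem~\ref{THM:1} doing the work of identifying limits. But there are substantive differences, and the step you yourself flag as ``the main obstacle'' is a genuine gap.

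First, two departures that are not fatal but do cost you. (i) You anchor the tower at the trivial system, making the base case the Kronecker factor and then running a single tower to $X$. The paper instead uses Theorem~\ref{Cauchy}-(4) to reduce to the characteristic factor pair $(\Z_{S,R}(X),\Z_{T,R}(X))$ and builds a \emph{pair} of towers $(X_{\theta,1},X_{\theta,2})$ with $X_{\theta,1}$ over $\Z_{S,R}$ and $X_{\theta,2}$ over $\Z_{T,R}$; the base case then reduces to the one-dimensional Birkhoff theorem because $S$ and $T$ act identically on $X_R$. Your Kronecker base case is fine, but without the characteristic-factor reduction you are committed to climbing all the way to $X$ with a lifting lemma you have not established. (ii) You assume a topologically distal model for a distal $\mathbb{Z}^2$-system exists ``by standard realization theorems,'' and then you lean on that topological distality (equicontinuity on fibers, etc.). The paper never needs topological distality; it uses Weiss's relative Jewett--Krieger theorem to get a strictly ergodic model in which $N_{S,T}(X)$ is uniquely ergodic (Theorem~\ref{thm:uniquelyergodicmodel}), which is a different and weaker structural input. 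Whether a $\mathbb{Z}^2$-distal Lindenstrauss-type model exists is not something you can assume without argument, and in any case it is not what powers the proof.

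The real gap is the inductive lifting step, which you describe but do not close. You propose to average $A_N(y)=\tfrac1N\sum f_1(S^iy)f_2(T^iy)$ over $y=S^jT^kx$ and then ``promote'' the resulting Theorem~\ref{THM:1} convergence back to pointwise convergence of $A_N$ via a van der Corput / cocycle-decomposition argument along the fiber group. That is not what the paper does, and the sketch leaves the actual mechanism unspecified. The paper's Proposition~\ref{liftiso} is the technical heart: starting from the unique ergodicity of $N_{S,T}(X)$ one obtains the disintegration $\lambda_{S,T}=\int \delta_x\times\mu'_x\,d\mu(x)$, so that the $L^2$-limit of the averages is $\int f_1\otimes f_2\,d\mu_x$; then to lift along an isometric extension $\pi_i\colon X_i\to Y_i$ one takes an arbitrary weak-* limit $\mu'_x$ of $\tfrac1N\sum (S^i\times T^i)\delta_{(p_1x,p_2x)}$, introduces weighted conditional expectations $\mathbb{E}_{\phi}$ along the fiber group, shows the resulting measure $\mu_{x,\phi_1,\phi_2}$ is both $(S\times T)$-invariant and absolutely continuous with respect to the $(S\times T)$-ergodic $\mu_x$, hence equal to it, and finally lets the weights concentrate at the identity using a Borel--Cantelli argument over a countable dense family to conclude $\mu'_x=\mu_x$ on a full-measure set. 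None of the weighted-expectation / absolute-continuity / Borel--Cantelli machinery appears in your proposal, and the van der Corput route you suggest has no obvious replacement for the crucial identification $\mu_{x,\phi_1,\phi_2}=\mu_x$. Until that step is carried out, the induction does not close.
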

  
The construction of a suitable topological model is essential in proving these theorems. A {\em topological model} for an ergodic system $(X,\mathcal{X},\mu,S,T)$ is a topological dynamical system with a probability measure for which the systems are measure theoretical isomorphic. The importance of a topological model is that its algebra of continuous functions naturally provides a dense algebra of functions (in $L^1$ norm for example) to work with. The strategy is to require additional properties to the model such that this algebra satisfies suitable properties related to multiple averages.

In this paper, we introduce a topological structure $N_{S,T}(X)$ (see Section \ref{Sec:BuildTopModel} for the definition) and prove in Section 3 that passing to a suitable extension, every system $X$ has a topological model whose $N_{S,T}(X)$ structure is strictly ergodic (see Section \ref{Sec:Background} for definitions). We then use this model to deduce 
Theorems \ref{THM:1} and \ref{THM:2} in Section 4.

\section*{Acknowledgments} We thank Bernard Host, Bryna Kra and Alejandro Maass for useful and clarifying discussions. We also thank the referee for valuable comments and pointing out some unclear statements in an early version of this paper.
 

\section{Background and notation} \label{Sec:Background}
\subsection{Measure theoretic and topological systems}
A \emph{measure preserving system} is a tuple $(X,\mathcal{X},\mu,G)$, where $(X,\mathcal{X},\mu)$ is a probability space and $G$ is a group of measurable, measure preserving transformations acting on $X$. It is \emph{ergodic} if all $G$-invariant sets have measure either 0 or 1. We omit writing the $\sigma$-algebra $\mathcal{X}$ when there is no possible confusion. 
 
A measure preserving system $(X,\mathcal{X},\mu,G)$ is {\em free} (or the action of $G$ on $(X,\mathcal{X},\mu)$ is free) if all elements different from the identity have no fixed points, {\em i.e. } $\mu(\{x: gx=x\})=0$ for all $g\neq \id$.   

Given two $\sigma$-algebras $\mathcal{A}$ and $\mathcal{B}$, $\mathcal{A}\vee \mathcal{B}$ denotes the $\sigma$-algebra generated by $\{A\cap B: A\in \mathcal{A}, B \in \mathcal{B}\}$. It is the smallest $\sigma$-algebra containing $\mathcal{A}$ and $\mathcal{B}$. If $f$ is a bounded function on $X$ and $\mathcal{A}$ is a $\sigma$-algebra, $\mathbb{E}(f\vert\mathcal{A})$ denotes the conditional expectation of $f$ with respect to $\mathcal{A}$. 

A \emph{factor map} $\pi\colon X\to Y$ between the measure preserving systems $(X,\mathcal{X},\mu,G)$ and $(Y,\mathcal{Y},\nu,G)$ is a measurable function  such that $\pi_{*} \mu =\nu$ and $\pi \circ g=g\circ \pi$ for every $g\in G$ (here $\pi_{*}\mu(A)\coloneqq\mu(\pi^{-1}(A))$, $A\in \mathcal{Y}$ is the pushforward measure of $\mu$, and in a slight abuse of notation, $G$ denotes the group action on both $X$ and $Y$). In this case we say that $(Y,\mathcal{Y},\nu,G)$ is a \emph{factor} of $(X,\mathcal{X},\mu,G)$ and  $(X,\mathcal{X},\mu,G)$ is an \emph{extension} of $(Y,\mathcal{Y},\nu,G)$. It is worth noting that $\mathcal{Y}$ can be viewed as an invariant sub $\sigma$-algebra of $\mathcal{X}$ by identifying $\mathcal{Y}$ with $\pi^{-1}(\mathcal{Y})$. If $\pi$ is bijective (modulo null sets), we say that $\pi$ is an  {\em isomorphism} and that $(X,\mathcal{X},\mu,G)$ and $(Y,\mathcal{Y},\nu,G)$ are {\em isomorphic}.

Given a factor map $\pi\colon X\to Y$ between the measure preserving systems $(X,\mathcal{X},\mu,G)$ and $(Y,\mathcal{Y},\nu,G)$ and a function $f\in L^2(\mu)$, the {\em conditional expectation of $f$ with respect to $Y$} is the function $\mathbb{E}(f\vert Y)\in L^2(\nu)$ such that $\mathbb{E}(f\vert Y)\circ \pi=\mathbb{E}(f\vert \mathcal{Y})$ (we regard $\mathcal{Y}$ as a sub $\sigma$-algebra of $\mathcal{X}$). This expectation is characterized by the equation (see for example \cite{Fu}, Chapter 5) \[\int_Y \mathbb{E}(f\vert Y)\cdot g d\nu=\int_X f \cdot g\circ\pi d\mu \quad \text{ for every } g \in L^2(\nu). \]
There exists a unique measurable map $Y\to M(X)$, $y\mapsto \mu_y$ such that
$\mathbb{E}(f\vert Y)(y)=\int f d\mu_y$ for every $f\in L^1(\mu)$. The expression $\mu=\int_Y \mu_y d\nu(y)$ is called the {\it disintegration} of $\mu$ over $\nu$.

\

A \emph{topological dynamical system} is a pair $(X,G)$, where $X$ is a compact metric space and $G$ is a group of homeomorphisms of the space $X$. $(X,G)$ is \emph{minimal} if for any $x\in X$, its orbit $\{gx: g\in G\}$ is dense in $X$. $(X,G)$ is \emph{strictly ergodic} if it is minimal and its convex set of invariant measures consists of just one measure. 
A topological factor map is an onto continuous function $\pi\colon X\to Y$ such that $\pi \circ g=g\circ \pi$ for every $g\in G$. 

Usually we write $(X,\mathcal{X},\mu,T_1,\ldots,T_d)$ to denote that $T_1,\ldots,T_d$ span a group of measurable measure preserving transformations on $X$ (we adapt the same convention in the topological context) and sometimes we write a subscript to the transformations (like $S_X$ or $T_X$) to stress the space where they are acting. 

{\bf Convention: } When there is no confusion, if $(X,\mathcal{X},\mu,S,T)$ is a system with commuting transformations $S$ and $T$, we always write $R=S^{-1}T$ and $G=\langle S,T\rangle=\langle S,R\rangle=\langle T,R\rangle$ for the group spanned by $S$ and $T$. We add some subscripts to avoid confusion when several systems are involved. We also use this convention in the topological context.

\subsection{Relative Jewett-Krieger Theorem}
Let $(X,\mathcal{X},\mu,G_{0})$ be a measure preserving system. A {\em strictly ergodic model} for $(X,\mathcal{X},\mu,G_{0})$ is a strictly ergodic topological dynamical system $(\widehat{X},G_{0})$ which is measurable isomorphic to $(X,\mathcal{X},\mu,G_{0})$ when we endowed it with its unique invariant measure $\widehat{\mu}$. We usually use $\widehat{\cdot}$ to denote a topological model for a system. 

When the acting group is the integers $\mathbb{Z}$, the well-known Jewett-Krieger Theorem \cite{J,K} states that every ergodic measure preserving system has a strictly ergodic model. Weiss \cite{W} generalized this result to abelian group actions and gave a relative version of it, which is a fundamental tool we use in this article. 
\begin{thm}[Weiss, \cite{W}] \label{Weiss}
Let $G_0$ be an abelian group and $\pi\colon(X,\mathcal{X},\mu,G_{0})\to (Y,\mathcal{Y},\nu,G_{0})$ be a factor map between ergodic and free systems. Let  $(\wh{Y},G_{0})$ be a strictly ergodic model for $(Y,\mathcal{Y},\nu,G_{0})$. Then there exist a strictly ergodic model $(\wh{X}, G_{0})$ for $(X,\mathcal{X},\mu,G_{0})$ and a topological factor map $\wh{\pi}:\wh{X}\to \wh{Y}$ such that the following diagram commutes:

\begin{figure}[h]
 \begin{tikzpicture}
  \matrix (m) [matrix of math nodes,row sep=3em,column sep=4em,minimum width=2em,ampersand replacement=\&]
  {
     X \& \widehat{X} \\
     Y \& \widehat{Y} \\};
  \path[-stealth]
     (m-1-1) edge node [left] {$\pi$} (m-2-1)
    (m-1-1) edge node [above] {$\Phi$} (m-1-2)
    (m-1-2) edge (m-1-1)
    (m-2-2) edge (m-2-1)
    (m-1-2) edge node [right] {$\widehat{\pi}$} (m-2-2)
    (m-2-1) edge node [below] {$\phi$} (m-2-2);
\end{tikzpicture}
 \end{figure}
where $\Phi$ and $\phi$ are measure preserving isomorphisms such that $\pi\circ \Phi=\phi \circ \widehat{\pi}$.
\end{thm}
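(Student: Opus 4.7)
The plan is to adapt the classical Jewett--Krieger construction of Rosenthal--Weiss to the relative setting over the prescribed strictly ergodic model $\widehat{Y}$. In the non-relative case one builds a strictly ergodic model for an ergodic system by producing a refining sequence $\xi_{1}\le \xi_{2}\le \cdots$ of finite measurable partitions of $X$ that generate $\mathcal{X}$ and whose cell indicators have Birkhoff averages (along a suitable F\o{}lner sequence for $G_{0}$) that converge to their integrals \emph{uniformly} in $x$. Each such $\xi_{n}$ determines, via itineraries, a symbolic factor $\widehat{X}_{n}$ of $X$; the uniformity forces $\widehat{X}_{n}$ to be uniquely ergodic and minimal, and the desired model $\widehat{X}$ is obtained as the inverse limit of the tower $\widehat{X}_{1}\leftarrow \widehat{X}_{2}\leftarrow \cdots$.

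For the relative statement I would run the same machinery subject to an additional compatibility constraint. Fix a refining sequence of finite clopen partitions $\eta_{1}\le \eta_{2}\le \cdots$ of $\widehat{Y}$ that generates the Borel $\sigma$-algebra modulo $\widehat{\nu}$-null sets, and require every $\xi_{n}$ to refine the pullback $(\phi\circ\pi)^{-1}(\eta_{n})$ inside $X$. Under this constraint the symbolic factor $\widehat{X}_{n}$ projects continuously onto the symbolic factor of $\widehat{Y}$ determined by $\eta_{n}$, and these continuous projections assemble to a topological factor map $\widehat{\pi}\colon\widehat{X}\to\widehat{Y}$. The measurable isomorphism $\Phi\colon X\to \widehat{X}$ comes from the generating property of $(\xi_{n})$ together with the Kolmogorov extension theorem on itineraries, and commutativity of the square is built into the construction.

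The construction of the $\xi_{n}$ is inductive. At stage $n$ one fixes a function $f_{n}$ taken from a countable $L^{2}$-dense family in $L^{\infty}(\mu)$, a tolerance $\varepsilon_{n}>0$ that decreases fast enough, and a large approximately invariant F\o{}lner set $F_{n}\subset G_{0}$. Using the freeness of the $G_{0}$-action and the Ornstein--Weiss Rokhlin lemma for free actions of countable amenable groups (abelian groups being amenable), one produces an $(F_{n},\varepsilon_{n})$-Rokhlin tower with base $B_{n}$ measurable with respect to $\pi^{-1}(\mathcal{Y})\vee\sigma(\xi_{n-1})$ and total mass exceeding $1-\varepsilon_{n}$. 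One then refines the levels of the tower to separate the values of $f_{n}$ up to $\varepsilon_{n}$, intersects with the cells of $(\phi\circ\pi)^{-1}(\eta_{n})$, and completes $\xi_{n}$ outside the tower by the pulled-back partition. A large-deviation estimate on the tower levels guarantees that the Birkhoff averages of $\mathbf{1}_{A}$, $A\in\xi_{n}$, along $F_{n}$ are within $\varepsilon_{n}$ of $\mu(A)$ uniformly in $x$.

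The main obstacle is precisely this relative Rokhlin step: the base $B_{n}$ must be chosen to respect the prescribed sub-$\sigma$-algebra coming from the factor $\widehat{Y}$ and from the previous partition, while the tower itself must be sufficiently long in the F\o{}lner sense to yield the uniform ergodic control. Once both the uniformity and the refinement constraint are in place, a standard verification shows that every $\widehat{X}_{n}$ is strictly ergodic, that strict ergodicity passes to the inverse limit $\widehat{X}$, that $\Phi$ is a measure-preserving isomorphism onto the unique invariant measure of $\widehat{X}$, and that the diagram in the statement commutes.
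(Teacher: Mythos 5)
The paper does not prove this theorem; it states it as a known result of Weiss \cite{W} (the relative Jewett--Krieger theorem for free ergodic actions of abelian groups) and uses it as a black box throughout Section 3. Your outline therefore cannot be checked against a proof contained in this paper, but it does capture the essential architecture of the known argument: a refining sequence of finite partitions whose cell indicators have Birkhoff averages along a F{\o}lner sequence converging uniformly to their integrals, obtained via Ornstein--Weiss Rokhlin towers for free actions of countable amenable groups, constrained so that each $\xi_n$ refines the pullback of a generating sequence of clopen partitions of $\widehat{Y}$, and the resulting symbolic factors assembled into an inverse limit carrying a topological factor map to $\widehat{Y}$.

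One caveat worth flagging: uniformity of Birkhoff averages for the cells of $\xi_n$ is not automatic after intersecting tower levels with pullbacks of clopen cells of $\widehat{Y}$, since products of uniformly convergent averages need not converge uniformly. The argument goes through because the averages for pulled-back clopen sets are already uniform by the strict ergodicity of $\widehat{Y}$, and the tower is built so that the $\xi_n$-name of a point over a long F{\o}lner window is essentially determined by its $\widehat{Y}$-name together with its height in the tower; this is what makes the combined partition uniform. Relatedly, requiring the tower base $B_n$ to be measurable with respect to $\pi^{-1}(\mathcal{Y})\vee\sigma(\xi_{n-1})$ is not the decisive constraint in Weiss's construction; what matters is the rigidity of the column structure with respect to the factor coloring. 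Your sketch identifies the correct plan but leaves this compatibility verification implicit, and that verification is where the technical work of the theorem lives.
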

We refer to $\wh{\pi}\colon \wh{X}\to \wh{Y}$ as a {\it topological model} for $\pi\colon X\to Y$.

\subsection{Facts about the $\Z_{W_1,W_2}$ factor}

In the measure theoretic context, if $W$ is a measure preserving transformation on a probability space $X$, we let $\I(W)$ denote the $\sigma$-algebra of $W$-invariant sets. 
For a system  $(X, \mathcal{X},\mu,S,T)$ with commuting transformations $S$ and $T$, let $X_{W}$ denote the factor associated to the $\sigma$-algebra $\I(W)$ and $\nu_{W}$ denote the projection of $\mu$ on $X_{W}$ for $W=S,T$ or $R$. For $(W_1,W_2) = (S,T)$, $(T,R)$ or $(S,R)$, let $\Z_{W_1,W_2}(X)$ denote the factor associated to the $\sigma$-algebra $\I(W_1)\vee \I(W_2)$. When there is no ambiguity, we write $\Z_{W_1,W_2}=\Z_{W_1,W_2}(X)$ for short. Let $\tilde{\pi}=\pi_{R}\times\pi_{T}\times\pi_{S}$ be the projection from $X^{3}$ onto $\Z_{S,T}\times\Z_{S,R}\times\Z_{T,R}$. 

The following lemma follows from Lemma 3.3 of \cite{DS}:
\begin{lem}\label{2p}
	Let $(X,\mathcal{X},\mu,S,T)$ be an ergodic system with commuting transformations $S$ and $T$. Then for $(W_1,W_2)=(S,T), (S,R)$ or $(T,R)$, we have
	$$ (\Z_{W_1,W_2},\I(W_1)\vee \I(W_2),\mu,W_{1},W_{2})\cong (X_{W_{1}}\times X_{W_{2}},\I(W_1)\times \I(W_2),\nu_{W_{1}}\times\nu_{W_{2}},\id\times W_{1},W_{2}\times \id),$$
	where $\id$ is the identity transformation.
\end{lem}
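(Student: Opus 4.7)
The plan is to exhibit the natural map $\Psi\colon X\to X_{W_1}\times X_{W_2}$ defined by $\Psi(x)=(\pi_{W_1}(x),\pi_{W_2}(x))$ and show that, after restriction to $\Z_{W_1,W_2}$, it realizes the claimed isomorphism. Equivariance comes almost for free: each $\pi_{W_i}$ factors through $\I(W_i)$, so $W_i$ acts trivially on $X_{W_i}$, while the commutation of $W_1$ and $W_2$ ensures that $W_i$ descends to a well-defined action (still denoted $W_i$) on $X_{W_j}$ for $j\neq i$. Consequently, $W_1$ acts on $X_{W_1}\times X_{W_2}$ as $\id\times W_1$ and $W_2$ as $W_2\times\id$, matching the statement. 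Moreover, the pullback via $\Psi$ of the product $\sigma$-algebra on $X_{W_1}\times X_{W_2}$ is exactly $\I(W_1)\vee\I(W_2)$, so $\Psi$ descends to a $G$-equivariant embedding of measure algebras from $\Z_{W_1,W_2}$ into $X_{W_1}\times X_{W_2}$.

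The substantive step is to show that $\Psi_*\mu=\nu_{W_1}\times\nu_{W_2}$; equivalently, that $\I(W_1)$ and $\I(W_2)$ are independent sub-$\sigma$-algebras of $\mathcal{X}$. Fix $f\in L^{\infty}(\nu_{W_1})$ and $g\in L^{\infty}(\nu_{W_2})$ with $\int g\,d\nu_{W_2}=0$, and write
\[\int_X (f\circ\pi_{W_1})(g\circ\pi_{W_2})\,d\mu=\int_X(f\circ\pi_{W_1})\cdot\mathbb{E}(g\circ\pi_{W_2}\,|\,\I(W_1))\,d\mu.\]
I would then compute the conditional expectation via the mean ergodic theorem along $W_1$. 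Since $W_1$ commutes with $\pi_{W_2}$,
\[\frac{1}{N}\sum_{n=0}^{N-1}(g\circ\pi_{W_2})\circ W_1^n=\Bigl(\frac{1}{N}\sum_{n=0}^{N-1}g\circ W_1^n\Bigr)\circ\pi_{W_2},\]
and the inner average converges in $L^2(\nu_{W_2})$ to the projection of $g$ onto the $W_1$-invariant functions on $X_{W_2}$.

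The key (and essentially only) point to verify is that $W_1$ acts ergodically on $(X_{W_2},\nu_{W_2})$. This is the main obstacle, though a mild one: it follows because the $G=\langle W_1,W_2\rangle$-action on $X$ is ergodic and ergodicity passes to factors, while on $X_{W_2}$ the transformation $W_2$ is the identity, so $G$-invariance on $X_{W_2}$ collapses to $W_1$-invariance. Granted this, the inner average converges to the constant $\int g\,d\nu_{W_2}=0$, forcing $\mathbb{E}(g\circ\pi_{W_2}\,|\,\I(W_1))=0$ and hence making the product integral vanish. Bilinearity in $f,g$ then yields $\Psi_*\mu=\nu_{W_1}\times\nu_{W_2}$, and combined with the first paragraph this gives the isomorphism of systems asserted in the lemma.
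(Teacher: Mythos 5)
Your proof is correct. The paper itself gives no argument for this lemma — it simply cites Lemma 3.3 of [DS] — so there is nothing to compare against; your argument is the standard one. The key chain you identify is exactly right: $G=\langle W_1,W_2\rangle$ acts ergodically on $X$ and hence on the factor $X_{W_2}$, where $W_2$ is trivial, so $W_1$ alone is ergodic there; the von Neumann average of $g\circ\pi_{W_2}$ along $W_1$ then pushes down to the constant $\int g\,d\nu_{W_2}$, which gives independence of $\I(W_1)$ and $\I(W_2)$, and the equivariance and $\sigma$-algebra identification you record in the first paragraph upgrade this to the asserted isomorphism of systems.
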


The following lemma was proved essentially in Section 3 of \cite{DS}.

\begin{lem} 
	Let $(X,\mathcal{X},\mu,S,T)$ be an ergodic system with commuting transformations $S$ and $T$. Then $(\Z_{S,T},\I(S)\vee \I(T),\mu,S,T)$ has a strictly ergodic topological model of the form $(Y\times Z,S\times \id, \id \times T)$, where $(Y,S)$ and $(Z,T)$ are strictly ergodic topological dynamical systems. 
\end{lem}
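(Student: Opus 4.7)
The plan is to combine Lemma \ref{2p} with the classical (single-transformation) Jewett--Krieger theorem and then verify that the resulting product system is strictly ergodic.

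First I would apply Lemma \ref{2p} with $(W_1,W_2)=(S,T)$ to obtain a measure-theoretic isomorphism
\[
(\Z_{S,T},\I(S)\vee \I(T),\mu,S,T)\;\cong\;(X_S\times X_T,\nu_S\times \nu_T,\id\times S, T\times \id).
\]
Swapping the two coordinates, this is isomorphic to $(X_T\times X_S,\nu_T\times \nu_S, S\times \id, \id\times T)$. The next step is to observe that $(X_T,\nu_T,S)$ and $(X_S,\nu_S,T)$ are each ergodic $\mathbb{Z}$-systems: any $S$-invariant set in $X_T$ pulls back to an $S$-invariant, $T$-invariant subset of $X$, hence to a $G$-invariant set, which must be trivial by ergodicity of the $G$-action; the argument for $X_S$ is symmetric. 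Applying the classical Jewett--Krieger theorem (which requires only ergodicity, not freeness) produces strictly ergodic topological models $(Y,S)$ for $(X_T,\nu_T,S)$ and $(Z,T)$ for $(X_S,\nu_S,T)$, with unique invariant measures $\mu_Y$ and $\mu_Z$.

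It remains to prove that the topological $\mathbb{Z}^2$-system $(Y\times Z,\, S\times \id,\, \id\times T)$ is itself strictly ergodic; composing the Jewett--Krieger isomorphisms then gives the desired model for $\Z_{S,T}$. For \emph{minimality}: the orbit of $(y,z)$ under the $\mathbb{Z}^2$ action is $\{S^i y:i\in\mathbb{Z}\}\times\{T^j z:j\in\mathbb{Z}\}$, which is dense in $Y\times Z$ because each factor is minimal. For \emph{unique ergodicity}, by unique ergodicity of $(Y,S)$ and $(Z,T)$ one has the \emph{uniform} convergence
\[
\frac{1}{N^2}\sum_{i,j=0}^{N-1} f(S^i y)\,g(T^j z) \;\longrightarrow\; \int f\,d\mu_Y \int g\,d\mu_Z,
\]
uniformly in $(y,z)$, for all $f\in C(Y)$ and $g\in C(Z)$. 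By Stone--Weierstrass the linear span of product functions $f\otimes g$ is dense in $C(Y\times Z)$, so uniform convergence of Birkhoff averages extends to all continuous functions on $Y\times Z$; this forces the unique $\mathbb{Z}^2$-invariant measure to be $\mu_Y\times \mu_Z$.

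There is no genuine obstacle; the only point requiring a little care is the strict ergodicity of the product, where one must avoid the trap of only invoking marginal invariance and instead use the uniform Birkhoff convergence (equivalent to unique ergodicity) on each factor combined with Stone--Weierstrass. The measure-theoretic isomorphism then comes for free from the Jewett--Krieger isomorphisms on each coordinate, since the product measure on $Y\times Z$ corresponds to $\nu_T\times\nu_S$, which matches the disintegration identified in Lemma \ref{2p}.
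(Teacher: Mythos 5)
Your proof is correct and follows the natural route that the paper (by reference to Section 3 of [DS]) also intends: reduce via Lemma \ref{2p} to a product system $(X_T\times X_S,\nu_T\times\nu_S,S\times\id,\id\times T)$, note that $(X_T,\nu_T,S)$ and $(X_S,\nu_S,T)$ are ergodic $\mathbb{Z}$-systems because an $S$-invariant set in $\I(T)$ (respectively a $T$-invariant set in $\I(S)$) is $G$-invariant, apply the classical Jewett--Krieger theorem coordinatewise, and verify that the topological product of two strictly ergodic $\mathbb{Z}$-systems with independent coordinate actions is strictly ergodic via the factorization of Birkhoff averages and Stone--Weierstrass. Both the identification of the correct marginal systems and the unique-ergodicity argument for the product are handled correctly, so this is essentially the same proof the authors point to.
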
 
\begin{rem}
	We refer to $(Y\times Z,S\times \id, \id \times T)$ as a \emph{product system}.
\end{rem}

\subsection{Host's magic systems and seminorms} \label{Subsec:Magic}

The following notions were introduced by Host in \cite{H}, inspired by the Austin's work \cite{Aus}, in order to study the $L^2$ convergence of multiple ergodic averages for commuting transformations. We briefly recall the construction for two commuting transformations $S$ and $T$. A more detailed exposition can be found in \cite{Chu,DS,H}. 

Let $\mu_S$ be the relative independent square of $\mu$ over $\mathcal{I}(S)$, {\em i.e.}  $$ \int_{X^2} f_0\otimes f_1 d\mu_S =\int_{X} \mathbb{E}(f_0\vert\mathcal{I}(S))\mathbb{E}(f_1\vert\mathcal{I}(S))d\mu$$
for all $f_0,f_1\in L^{\infty}(\mu)$. Then $\mu_S$ is a measure on $X^2$ invariant under $\id \times S$ and $g\times g$ for $g\in G=\langle S, T \rangle$. The measures $\mu_{T}$ and $\mu_{R}$ can be defined in a similar way.

Let $\mu_{S,T}$ denote the relative independent square of $\mu_{S}$ over $\mathcal{I}(T\times T)$, { \em i.e.}  
$$\int_{X^4}f_0\otimes f_1\otimes f_2\otimes f_3 d\mu_{S,T}=\int_{X^2} \mathbb{E}(f_0\otimes f_1\vert\mathcal{I}(T\times T))\mathbb{E}(f_2\otimes f_3\vert\mathcal{I}(T\times T))d\mu_{S}$$
for all $f_0,f_1,f_2,f_3\in L^{\infty}(\mu)$. Then $\mu_{S,T}$ is a measure on $X^4$ invariant under $\id\times S\times \id \times S$, $\id\times \id \times T \times T$ and under $g\times g\times g\times g $ for all $g\in G$. The measures $\mu_{S,R}$ and $\mu_{T,R}$ can be defined similarly. 

Write ${S^{\ast}}=\id\times S\times \id \times S$ and ${T^{\ast}}=\id\times \id \times T \times T$. Then $(X^4,\mathcal{X}^4,\mu_{S,T}, S^{\ast},T^{\ast})$ is a system with commuting transformations $S^{\ast}$ and $T^{\ast}$. The projection $\pi\colon(x_0,x_1,x_2,x_3)\to x_3$ defines a factor map between $(X^4,\mathcal{X}^4,\mu_{S,T},  S^{\ast},T^{\ast})$ and $(X,\mathcal{X},\mu,S,T)$. We remark that the system $(X^4,\mathcal{X}^{4},\mu_{S,T}, S^{\ast},T^{\ast})$ is not ergodic even when $(X,\mathcal{X},\mu,S,T)$ is. Nevertheless, it can be proved that $\mu_{S,T}$ is ergodic under the action spanned by $S^{\ast}$, $T^{\ast}$ and $g\times g\times g\times g$, $g\in G$. This can be deduced from page 12 in \cite{H} or can be derived as a consequence of Theorem 4.1 in \cite{DS}. Particularly (projecting into the first half), $\mu_W$ is ergodic under the action spanned by $\id \times W$ and $g\times g$, $g\in G$ for $W=S,T,R$.

\begin{defn}
	 For $f\in L^{\infty}(\mu)$, the \emph{Host seminorms} are  the quantities

$$\normm{f}_{\mu,W}=\Bigl(\int_{X^2}  f\otimes f d\mu_{W}\Bigr)^{1/2}$$
for $W=S,T,R$, and

$$\normm{f}_{\mu,W_1,W_2}=\Bigl(\int_{X^4}  f\otimes f\otimes f\otimes f d\mu_{W_1,W_2}\Bigr)^{1/4}$$
for $(W_1,W_2)=(S,T)$, $(S,R)$ or $(T,R)$.
\end{defn}

We summarize some results concerning these seminorms for later use.
\begin{thm}[\cite{H}, Sections 2,3,4; \cite{Chu} Section 3] \label{Cauchy}
Let $(W_1,W_2)=(S,T)$, $(S,R)$ or $(T,R)$. Then
\begin{enumerate}
\item ({\it Cauchy-Schwartz type inequality}) For $f_0,f_1,f_2,f_3\in L^{\infty}(\mu)$, we have
 $$\int_{X^4} f_0\otimes f_1 \otimes f_2\otimes f_3 d\mu_{W_1,W_2}\leq \normm{f_0}_{\mu,W_1,W_2}\normm{f_1}_{\mu,W_1,W_2}\normm{f_2}_{\mu,W_1,W_2}\normm{f_3}_{\mu,W_1,W_2}{\rm ;} $$

\item $\normm{\cdot}_{\mu,W_1,W_2}$ is a seminorm on $L^{\infty}(\mu)$. Moreover $\normm{\cdot}_{\mu,W_1,W_2}=\normm{\cdot}_{\mu,W_2,W_1}$ and $\normm{\cdot}_{\mu,W_1,W_2}\leq \|\cdot\|_{L^4(\mu)}{\rm ;}$

\item $\normm{f}_{\mu,W_1,W_2}= \lim\limits_{H\to \infty} \frac{1}{H} \sum\limits_{h=0}^{H-1} \normm{f\circ W_2^h \cdot f}_{\mu,W_1}= \lim\limits_{H\to \infty} \frac{1}{H} \sum\limits_{h=0}^{H-1} \Bigl\|\mathbb{E}(f\circ W_2^h \cdot f \vert\I(W_1))\Bigr\|_{L^2(\mu)}{\rm ;}$

\item \[\limsup_{N\to \infty} \left \|\frac{1}{N} \sum_{i=0}^{N-1} f_1(W_1^ix)f_2(W_2^ix) \right \|_{L^2(\mu)} \leq \min\{ \normm{f_1}_{\mu,W_1,W_1^{-1}W_2},\normm{f_2}_{\mu,W_2,W_1^{-1}W_2}\};\]
Particularly, \[\limsup_{N\to \infty} \left \|\frac{1}{N} \sum_{i=0}^{N-1} f_1(S^ix)f_2(T^ix) \right \|_{L^2(\mu)} \leq
\min\{\normm{f_1}_{\mu,S,R}, \normm{f_2}_{\mu,T,R}\}{\rm ;}\]

\item If $\pi\colon (X,\mathcal{X},\mu,W_1,W_2)\to (Y,\mathcal{Y},\nu,W_1,W_2)$ is a factor map, then 
 \[\normm{f}_{\nu,W_1,W_2}=\normm{f\circ \pi}_{\mu,W_1,W_2}{\rm ;}\]

\item If $\normm{f}_{\mu,W_1,W_2}=0$, then $\mathbb{E}(f\mid \I(W_1)\vee \I(W_2))=0$. 

\end{enumerate}
\end{thm}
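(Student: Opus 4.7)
The plan is to establish the six items in turn, exploiting that $\mu_{W_1,W_2}$ is built as a two-step relative independent square and that the resulting measures are invariant under rich enough group actions; the arguments follow the Host--Kra theory of uniformity seminorms, adapted to commuting transformations.

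For item (1), I would unravel the definition of $\mu_{W_1,W_2}$ to rewrite
\[\int_{X^4} f_0\otimes f_1\otimes f_2\otimes f_3\, d\mu_{W_1,W_2}=\int_{X^2}\mathbb{E}(f_0\otimes f_1\mid\I(W_2\times W_2))\cdot\mathbb{E}(f_2\otimes f_3\mid\I(W_2\times W_2))\, d\mu_{W_1},\]
and then apply Cauchy--Schwartz at each stage of the tower (including the inner one describing $\mu_{W_1}$ over $\I(W_1)$) to peel off the four factors. Item (2) follows as a direct corollary: subadditivity comes from expanding $\normm{f+g}^4$ into multilinear pieces bounded by (1); positive homogeneity is clear; the bound by $\|\cdot\|_{L^4(\mu)}$ is conditional Jensen; and the symmetry under swapping $W_1,W_2$ is obtained by rewriting $\int f^{\otimes 4}\, d\mu_{W_1,W_2}$ as an iterated integral manifestly symmetric in the two conditioning stages. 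Item (5) is immediate from the naturality of the conditional expectation: a factor map sends $\I(W_i)$ to $\I(W_i)$ and hence pushes $\mu_{W_1,W_2}$ to $\nu_{W_1,W_2}$, so both sides of the identity agree.

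For item (3), I would start from $\normm{f}_{\mu,W_1,W_2}^4=\|\mathbb{E}(f\otimes f\mid\I(W_2\times W_2))\|_{L^2(\mu_{W_1})}^2$, which is immediate from the definition, and apply the mean ergodic theorem for the action of $W_2\times W_2$ on $(X^2,\mu_{W_1})$ to replace the conditional expectation by the Ces\`aro limit of $(W_2^h\times W_2^h)(f\otimes f)$ in $L^2(\mu_{W_1})$; carrying this limit through the norm and expanding yields the averaged identity relating $\normm{f}_{\mu,W_1,W_2}$ to $\normm{f\circ W_2^h\cdot f}_{\mu,W_1}$, while the second equality in (3) is just the definition of $\normm{\cdot}_{\mu,W_1}$. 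For item (4), my plan is to apply the van der Corput lemma to the sequence $u_i=f_1\circ W_1^i\cdot f_2\circ W_2^i\in L^2(\mu)$; the inner products $\langle u_{i+h},u_i\rangle_{L^2(\mu)}$ rearrange, via invariance of $\mu$, into integrals of the shape $\int(f_j\cdot f_j\circ W_j^h)\cdot(\text{other factor})\circ(W_1^{-1}W_2)^i\, d\mu$, and the mean ergodic theorem for $R=W_1^{-1}W_2$ combined with item (3) then bounds the resulting limsup by $\normm{f_j}_{\mu,W_j,R}$ for either choice of $j$.

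Finally, item (6) is the structural one: combining (3) with Lemma 2.3, the hypothesis $\normm{f}_{\mu,W_1,W_2}=0$ forces $\mathbb{E}(f\cdot f\circ W_2^h\mid\I(W_1))$ to vanish in $L^2$ in an averaged sense over $h$, and testing against rank-one tensors in $L^2(X_{W_1})\otimes L^2(X_{W_2})$ --- which generate $L^2(\Z_{W_1,W_2})$ by the product description of $\Z_{W_1,W_2}$ --- forces $\mathbb{E}(f\mid\I(W_1)\vee\I(W_2))=0$. The principal obstacle I foresee is exactly this last item, which bridges an analytic vanishing statement with a measurable factor identification: the other items reduce to conditional-expectation manipulations and standard $L^2$ ergodic tools, whereas (6) relies on the explicit structural content of Lemma 2.3 and on carefully converting averaged vanishing into a pointwise identity between conditional expectations.
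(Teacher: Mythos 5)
The paper does not prove this theorem: it is presented as a summary of known facts, with an explicit citation to Host \cite{H} (Sections 2--4) and Chu \cite{Chu} (Section 3), so there is no in-paper argument for you to match. Your reconstruction follows the same route as the cited references---nested Cauchy--Schwarz through the two-stage relative independent product for (1), multilinear expansion plus (1) and conditional Jensen for (2), the von Neumann ergodic theorem for $W_2\times W_2$ on $(X^2,\mu_{W_1})$ for (3), van der Corput together with (3) for (4), and a testing/duality argument using Lemma~\ref{2p} for (6)---so at the level of a sketch your plan is sound and matches the standard Host seminorm machinery.

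One imprecision worth flagging in your item (5): it is not true in general that a factor map ``sends $\I(W_i)$ to $\I(W_i)$'' in the sense you seem to use it. The invariant $\sigma$-algebra $\I_X(W_i)$ upstairs may be strictly larger than $\pi^{-1}(\I_Y(W_i))$, so one cannot simply identify $\mathbb{E}(f\circ\pi\mid\I_X(W_1))$ with $\mathbb{E}(f\mid\I_Y(W_1))\circ\pi$. The correct statement, which is what the identity actually requires, is that $(\pi\times\pi\times\pi\times\pi)_{*}\mu_{W_1,W_2}=\nu_{W_1,W_2}$. To see this, express the defining integral of $\mu_{W_1,W_2}$ against a box tensor as an iterated Ces\`aro limit by applying the von Neumann ergodic theorem at each conditioning stage; each averaged integrand transports along $\pi$ because $\pi$ is equivariant and measure preserving, and the limits of the averages agree even though the conditional expectations themselves need not correspond. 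This yields $\normm{f\circ\pi}_{\mu,W_1,W_2}=\normm{f}_{\nu,W_1,W_2}$ without any claim about the invariant $\sigma$-algebras matching. Your item (6) sketch is also thinner than the rest---you would need to spell out how the Ces\`aro vanishing of $\|\mathbb{E}(f\circ W_2^h\cdot f\mid\I(W_1))\|_{L^2}$ converts into $\int f\,g\,h\,d\mu=0$ for $g$ $W_1$-invariant and $h$ $W_2$-invariant, say by an ergodic-average approximation of $g$ and $h$ together with the Cauchy--Schwarz inequality of item (1)---but the route you indicate is the right one.
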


\begin{defn}
 Let $(X,\mathcal{X},\mu,S,T)$ be a measure preserving system with commuting transformations $S$ and $T$. We say that $(X,\mathcal{X},\mu,S,T)$ is {\it magic} if
 $$\mathbb{E}(f\vert\mathcal{I}(S)\vee \mathcal{I}(T))=0 \text{ if and only if } \normm{f}_{\mu,S,T}=0.$$

\end{defn}
The connection between the Host measure $\mu_{S,T}$ and magic systems is:
\begin{thm}[\cite{H}, Theorem 2]\label{mig}
The system $(X^4,\mathcal{X}^4,\mu_{S,T}, {S^{\ast}},{T^{\ast}})$ defined in Section \ref{Subsec:Magic} is a magic extension system of $(X,\mathcal{X},\mu,S,T)$.
\end{thm}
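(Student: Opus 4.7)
The statement packages two assertions that I would treat separately: that the projection $\pi \colon (x_0,x_1,x_2,x_3)\mapsto x_3$ realizes $(X^4,\mathcal{X}^4,\mu_{S,T}, S^{\ast},T^{\ast})$ as an extension of $(X,\mathcal{X},\mu,S,T)$, and that this particular extension satisfies the magic property
\[\mathbb{E}\bigl(F\mid \mathcal{I}(S^{\ast}) \vee \mathcal{I}(T^{\ast})\bigr)=0 \iff \normm{F}_{\mu_{S,T},S^{\ast},T^{\ast}}=0\]
for every $F\in L^{\infty}(\mu_{S,T})$. The extension claim is essentially bookkeeping: since $\mu_{S,T}$ is built from $\mu_{S}$, which in turn is built from $\mu$, by iterated relative independent products, each of the four $X$-marginals equals $\mu$; and $\pi$ is equivariant because $S^{\ast}=\id\times S\times \id \times S$ and $T^{\ast}=\id\times\id\times T\times T$ act as $S$ and $T$ on the fourth coordinate.

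For the magic property, one implication is free: Theorem \ref{Cauchy}(6) applied to the system $(X^4,\mu_{S,T},S^{\ast},T^{\ast})$ immediately gives $\normm{F}_{\mu_{S,T},S^{\ast},T^{\ast}}=0 \Rightarrow \mathbb{E}(F\mid \mathcal{I}(S^{\ast})\vee \mathcal{I}(T^{\ast}))=0$. So the substance of the theorem is the converse, which uses the specific way $\mu_{S,T}$ was constructed.

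My plan for the converse is to expand
\[\normm{F}_{\mu_{S,T},S^{\ast},T^{\ast}}^{4}=\int_{X^{16}} F^{\otimes 4}\, d(\mu_{S,T})_{S^{\ast},T^{\ast}}\]
and unfold the four nested relative independent squares that build $(\mu_{S,T})_{S^{\ast},T^{\ast}}$ out of $\mu$. This produces a measure on $X^{16}$ whose coordinates are naturally indexed by $\{0,1\}^{4}$, two bits coming from the cubic structure of $\mu_{S,T}$ itself and two more from the two doubling steps implicit in the seminorm. Each doubling is symmetric under the corresponding axis-swap, so the integral inherits a large permutation symmetry. Using the elementary identity $\int f\otimes g\, d(\lambda\times_{\mathcal{A}}\lambda)=\int \mathbb{E}(f\mid \mathcal{A})\,\mathbb{E}(g\mid \mathcal{A})\, d\lambda$ inductively, I would peel off the doublings one at a time, converting the integral into a nested $L^{2}$-pairing of conditional expectations onto the anchor $\sigma$-algebras $\mathcal{I}(S)$ and $\mathcal{I}(T\times T)$.

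The main obstacle, and the heart of Host's \cite{H} argument, is the identification step: after all four unfoldings one has to verify that the only information about $F$ that survives is its conditional expectation onto $\mathcal{I}(S^{\ast})\vee \mathcal{I}(T^{\ast})$. Concretely, one must match the algebra $\mathcal{I}(S)$, promoted to the appropriate coordinates of the $\{0,1\}^{4}$-cube, with $\mathcal{I}(S^{\ast})$ on $X^{4}$, and likewise for $\mathcal{I}(T\times T)$ with $\mathcal{I}(T^{\ast})$. This compatibility is exactly the reason $\mu_{S,T}$ was constructed using those specific anchor algebras rather than any others; once it is established, vanishing of $\mathbb{E}(F\mid \mathcal{I}(S^{\ast})\vee \mathcal{I}(T^{\ast}))$ makes one of the intermediate factors in the nested inner product vanish, killing the whole integral.
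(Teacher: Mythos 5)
The paper does not actually prove this statement; it is quoted directly from Host \cite{H}, Theorem~2, and used as a black box. So there is no in-paper argument to compare your sketch against, and the comparison must be with Host's original proof.

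Your scaffolding is sound: the extension claim is indeed bookkeeping, and the implication $\normm{F}_{\mu_{S,T},S^{\ast},T^{\ast}}=0 \Rightarrow \mathbb{E}(F\mid\mathcal{I}(S^{\ast})\vee\mathcal{I}(T^{\ast}))=0$ does follow from Theorem~\ref{Cauchy}(6). The gap is in the converse. You correctly identify that the crux is ``matching'' $\mathcal{I}(S)$ and $\mathcal{I}(T\times T)$ against $\mathcal{I}(S^{\ast})$ and $\mathcal{I}(T^{\ast})$, but you then defer it as though it were a sanity check; in Host's proof this identification \emph{is} the theorem. What one actually needs is the structural lemma that, modulo $\mu_{S,T}$-null sets, $\mathcal{I}(T^{\ast})$ coincides with the $\sigma$-algebra generated by the coordinates $(x_0,x_1)$ and $\mathcal{I}(S^{\ast})$ with the one generated by $(x_0,x_2)$, so that $\mathcal{I}(S^{\ast})\vee\mathcal{I}(T^{\ast})$ is the $\sigma$-algebra generated by the first three coordinates. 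Establishing this uses both the iterated relative-independent-product construction of $\mu_{S,T}$ and the ergodicity of $\mu_{S,T}$ under the enlarged group spanned by $S^{\ast}$, $T^{\ast}$ and the diagonal $g\times g\times g\times g$ (a fact the paper records in Section~\ref{Subsec:Magic} and itself attributes to \cite{H}). Without that identification in hand, the last step of your plan is circular: you cannot recognize the nested conditional expectations appearing in the $X^{16}$ integral as projections onto $\mathcal{I}(S^{\ast})\vee\mathcal{I}(T^{\ast})$ until you know what those invariant $\sigma$-algebras are, so ``vanishing of $\mathbb{E}(F\mid\mathcal{I}(S^{\ast})\vee\mathcal{I}(T^{\ast}))$ kills the integral'' remains an assertion rather than a deduction.
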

 
The following theorem stated in \cite{DS} Section 3 strengthens this result.
\begin{thm} \label{MagicFree}
Let $(X,\mathcal{X},\mu,S,T)$ be an ergodic system with commuting transformations $S$ and $T$. Suppose that $S^i$ and $T^j$ are not the identity for any $i,j\in \mathbb{Z}\setminus \{0\}$ (equivalently, $(X,\mathcal{X},\mu,S)$ and $(X,\mathcal{X},\mu,T)$ are free). Then there exists a magic extension $(X',\mathcal{X}',\nu, {S^{\ast
}},{T^{\ast}})$ of $X$ such that the action of $\langle S^{\ast},T^{\ast} \rangle$ is free and ergodic on $X'$.
\end{thm}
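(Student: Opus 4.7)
The plan is to take Host's magic extension $(X^4,\mathcal{X}^4,\mu_{S,T},S^\ast,T^\ast)$ from Theorem \ref{mig} as the starting point and pass to a suitable ergodic component under $\langle S^\ast,T^\ast\rangle$ to obtain an extension that is simultaneously magic, free, and ergodic.

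\textbf{Step 1 (freeness is automatic).} Since $(S^\ast)^a(T^\ast)^b=\mathrm{id}\times S^a\times T^b\times S^a T^b$, its set of fixed points is contained in $X\times\{x_1:S^a x_1=x_1\}\times X\times X$ when $a\neq 0$, and in $X\times X\times\{x_2:T^b x_2=x_2\}\times X$ when $a=0$ but $b\neq 0$. The marginals of $\mu_{S,T}$ on the second and third coordinates are $\mu$, so the freeness hypotheses on $(X,\mu,S)$ and $(X,\mu,T)$ force both sets to be $\mu_{S,T}$-null. Hence $\langle S^\ast,T^\ast\rangle$ acts freely on $(X^4,\mu_{S,T})$.

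\textbf{Step 2 (ergodic decomposition).} Because the first coordinate of $X^4$ is pointwise fixed, $\mu_{S,T}$ is never $\langle S^\ast,T^\ast\rangle$-ergodic. Decompose $\mu_{S,T}=\int \eta_\omega\, dP(\omega)$ into its ergodic components. Let $\pi:X^4\to X$ be projection to the fourth coordinate; it conjugates $S^\ast,T^\ast$ to $S,T$. Each $\pi_\ast\eta_\omega$ is $S,T$-invariant and ergodic (continuous image of an ergodic measure under an equivariant map), and $\mu=\pi_\ast\mu_{S,T}=\int \pi_\ast\eta_\omega\, dP(\omega)$. By uniqueness of the ergodic decomposition of the ergodic measure $\mu$, $\pi_\ast\eta_\omega=\mu$ for $P$-a.e.\ $\omega$, so each such $\eta_\omega$ is an extension of $(X,\mu,S,T)$. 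Freeness from Step 1 descends to $P$-a.e.\ $\eta_\omega$ by integrating against the decomposition.

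\textbf{Step 3 (magic property on components).} Recall from Section \ref{Subsec:Magic} that $\mu_{S,T}$ is ergodic under the larger commuting group $\langle S^\ast,T^\ast,\Delta_G\rangle$ with $\Delta_G=\{g\times g\times g\times g:g\in G\}$. Since $\Delta_G$ commutes with $\langle S^\ast,T^\ast\rangle$ it permutes the $\langle S^\ast,T^\ast\rangle$-ergodic components; ergodicity of the enlarged system forces this action to be transitive, so all $\eta_\omega$ are mutually isomorphic via translation by $\Delta_G$. The Host seminorm $\normm{\cdot}_{\cdot,S^\ast,T^\ast}$ and the $\sigma$-algebra $\mathcal{I}(S^\ast)\vee\mathcal{I}(T^\ast)$ are both preserved by such isomorphisms, so the magic property holds on every $\eta_\omega$ or on none. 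To rule out the latter alternative, combine the magic property of $\mu_{S,T}$ (Theorem \ref{mig}) with an integration identity along the decomposition of the form
\[ \normm{f}_{\mu_{S,T},S^\ast,T^\ast}^4 = \int \normm{f}_{\eta_\omega,S^\ast,T^\ast}^4\, dP(\omega), \]
together with the corresponding identity for $\mathbb{E}(f\mid\mathcal{I}(S^\ast)\vee\mathcal{I}(T^\ast))$, to transfer the magic equivalence to a.e.\ component. Picking any good $\omega_0$ and setting $(X',\mathcal{X}',\nu,S^\ast,T^\ast):=(X^4,\mathcal{X}^4,\eta_{\omega_0},S^\ast,T^\ast)$ yields the required extension.

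\textbf{Main obstacle.} Step 3 is the hardest part. The Host seminorm is built from an iterated relatively independent product, and its interaction with the ergodic decomposition is not immediate: the conditioning $\sigma$-algebras $\mathcal{I}(T\times T)$ used to define $\mu_{S,T}$ must be compared, component by component, with their analogues for the $\eta_\omega$'s. Establishing the integration identity above (or an adequate substitute) is where the specific structure of $\mu_{S,T}$ as a relative product, and the transitivity of $\Delta_G$ on the ergodic components, enter in an essential way.
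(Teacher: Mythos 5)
Your overall strategy — pass to an ergodic component of Host's magic extension $(X^4,\mu_{S,T},S^\ast,T^\ast)$ and show that almost every component is free, ergodic, and magic — is exactly the approach taken in \cite{DS}, which is the reference this paper cites for the theorem (see also the discussion in the proof of Proposition \ref{Prop:DistalMagicExtension}, where precisely this ergodic decomposition of $\mu_{S,T}$ is invoked). Steps 1 and 2 are correct: freeness follows from the marginals being $\mu$, and the pushforward of a.e.\ ergodic component under the fourth-coordinate projection must be $\mu$ by uniqueness of ergodic decomposition.

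The gap is Step 3, which you explicitly flag as unresolved. Two concrete issues. First, ``transitivity'' of $\Delta_G$ on the space of ergodic components is not the right statement when $G\cong\mathbb{Z}^2$ and the space of components is a nontrivial probability space: what ergodicity of the enlarged group gives you is ergodicity of the induced $\Delta_G$-action on the space of components, so that any $\Delta_G$-invariant measurable set of components has $P$-measure $0$ or $1$; you then need to check that $\{\omega:\eta_\omega\text{ is magic}\}$ is measurable and $\Delta_G$-invariant. Second, and more substantially, the proposed integration identity $\normm{f}_{\mu_{S,T},S^\ast,T^\ast}^4=\int\normm{f}_{\eta_\omega,S^\ast,T^\ast}^4\,dP(\omega)$ is asserted but not justified; it can be proved, but only by tracing through the two-step relatively independent product and observing that the invariant $\sigma$-algebra $\mathcal{I}(\langle S^\ast,T^\ast\rangle)$ indexing the decomposition sits inside both $\mathcal{I}(S^\ast)$ and $\mathcal{I}(T^\ast)$, so that the conditional expectations in each stage of the construction are compatible with the disintegration. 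Even granting both identities, they do not by themselves yield the magic property of a.e.\ component: given $f$ with $\mathbb{E}_{\eta_\omega}(f\mid\mathcal{I}(S^\ast)\vee\mathcal{I}(T^\ast))=0$ for a fixed $\omega$, you cannot feed $f$ directly into the $\mu_{S,T}$-magic equivalence, since its global conditional expectation may be nonzero. One has to pass to $g:=f-\mathbb{E}_{\mu_{S,T}}(f\mid\mathcal{I}(S^\ast)\vee\mathcal{I}(T^\ast))$, apply the magicness of $\mu_{S,T}$ and the integration identity to conclude $\normm{g}_{\eta_\omega}=0$ a.e., observe $g=f$ on the relevant fiber, and then uniformize over a countable dense family of test functions before one can claim ``a.e.\ component is magic''. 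These steps are all doable and are exactly what the reference \cite{DS} supplies, but as written your Step 3 is a sketch of the target rather than a proof.
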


\section{Building the topological model} \label{Sec:BuildTopModel}
In what follows we assume that $(X,\mathcal{X},\mu,S)$ and $(X,\mathcal{X},\mu,T)$ are free, since otherwise either $S$ or $T$ is periodic, and the averages we consider can be easily treated.
We study in detail the following topological structure.
\begin{defn}
	Let $(X,S,T)$ be a topological dynamical system with commuting transformations $S$ and $T$. We define $N_{S,T}(X)$ to be the set \[ N_{S,T}(X)=
	\overline{\{ (x,S^ix,T^ix)\colon  x\in X, i\in \mathbb{Z} \}} \subseteq X^3.\]
\end{defn}

Let $H_{S,T}\subseteq G^{3}$ be the group spanned by $\id\times S\times T$, $S\times S\times S$ and $T\times T\times T$. We remark that $H_{S,T}$ leaves invariant $N_{S,T}(X)$. Moreover, we have

\begin{prop}
	Let $(X,S,T)$ be a minimal topological dynamical system with commuting transformations $S$ and $T$. Then $(N_{S,T}(X),H_{S,T})$ is also a minimal topological dynamical system.
\end{prop}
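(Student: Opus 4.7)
My plan is to first establish minimality on a dense $H_{S,T}$-invariant subset of $N_{S,T}(X)$ and then extend to the whole space. Set $A := \{(x, S^i x, T^i x) : x \in X,\ i \in \mathbb{Z}\}$, so that $N_{S,T}(X) = \overline{A}$ by definition. I would first verify that $A$ is $H_{S,T}$-invariant: using $ST = TS$, applying $\id \times S \times T$ to $(x, S^i x, T^i x)$ gives $(x, S^{i+1} x, T^{i+1} x) \in A$, while applying $S \times S \times S$ or $T \times T \times T$ yields $(gx, S^i(gx), T^i(gx))$ with $g = S$ or $g = T$, again in $A$. Taking closure, $N_{S,T}(X)$ is also $H_{S,T}$-invariant, so $(N_{S,T}(X), H_{S,T})$ is a bona fide topological dynamical system.

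Next, for a point $p = (x, S^i x, T^i x) \in A$, a short computation shows the $H_{S,T}$-orbit equals $\{(x', S^k x', T^k x') : x' \in Gx,\ k \in \mathbb{Z}\}$: a general element $D_1^a D_S^b D_T^c$ sends $p$ to $(g'x, S^{i+a}(g'x), T^{i+a}(g'x))$ with $g' = S^b T^c \in G$. Since $(X, G)$ is minimal, $Gx$ is dense in $X$; combined with the uniform continuity of $S^k$ and $T^k$ on the compact space $X$ for each fixed $k \in \mathbb{Z}$, we conclude that this orbit is dense in $\{(x', S^k x', T^k x') : x' \in X,\ k \in \mathbb{Z}\} = A$, hence in $N_{S,T}(X) = \overline{A}$. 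So every point of $A$ has dense $H_{S,T}$-orbit.

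To extend this to a general point of $N_{S,T}(X)$, I would argue that every $H_{S,T}$-minimal subset $M \subseteq N_{S,T}(X)$ coincides with $N_{S,T}(X)$. For each $i \in \mathbb{Z}$, the slice $\Delta_i := \{(x, S^i x, T^i x) : x \in X\}$ is closed in $N_{S,T}(X)$ and invariant under the diagonal subgroup $\langle D_S, D_T \rangle$; under the identification $(x, S^i x, T^i x) \leftrightarrow x$, the action of this subgroup on $\Delta_i$ coincides with the $G$-action on $X$. Hence $M \cap \Delta_i$ corresponds to a closed $G$-invariant subset of $X$, which by minimality of $(X, G)$ is either empty or all of $\Delta_i$. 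If $M \cap \Delta_i = \Delta_i$ for some $i$, then by $D_1$-invariance of $M$ one has $\Delta_j \subseteq M$ for every $j \in \mathbb{Z}$, giving $A \subseteq M$ and hence $M = \overline{A} = N_{S,T}(X)$.

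The main obstacle is handling the opposite case $M \cap A = \emptyset$. To derive a contradiction, pick $p = (x_0, y_0, z_0) \in M$ and approximate it by $p_n = (x_n, S^{i_n} x_n, T^{i_n} x_n) \in A$. Applying $D_1^{-i_n}$ to $p_n$ gives $(x_n, x_n, x_n) \in \Delta_0$, converging to $(x_0, x_0, x_0)$. The delicate task is to show that $D_1^{-i_n} p = (x_0, S^{-i_n} y_0, T^{-i_n} z_0)$ also has a subsequence converging to $(x_0, x_0, x_0)$; the difficulty is that the iterates $(S \times T)^{-i_n}$ are not uniformly continuous in $n$. The plan is to exploit the freedom in choosing $p_n$ (since $p \in \overline{A}$, the error $d(p_n, p)$ can be made arbitrarily small) to match this error against the continuity modulus of $(S \times T)^{-i_n}$ on the last two coordinates at each $n$, ensuring $d(D_1^{-i_n} p, (x_n, x_n, x_n))$ tends to zero along a suitable subsequence. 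Since $M$ is closed and $D_1^{-i_n} p \in M$ by $H_{S,T}$-invariance, the limit $(x_0, x_0, x_0)$ lies in $M \cap \Delta_0$, contradicting $M \cap A = \emptyset$.
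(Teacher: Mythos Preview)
Your first two steps are fine: the verification that $A$ is $H_{S,T}$-invariant, that every point of $A$ has dense orbit, and that any minimal $M$ meeting some $\Delta_i$ must equal $N_{S,T}(X)$, are all correct and cleanly written. (The paper itself omits the proof entirely, citing the analogous argument in Glasner, so there is nothing to compare against at that level.)

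The gap is in the final step, and it is exactly the ``delicate task'' you flag: the proposed resolution is circular. You want to choose $p_n=(x_n,S^{i_n}x_n,T^{i_n}x_n)$ so close to $p$ that $d(D_1^{-i_n}p,D_1^{-i_n}p_n)$ is small; this requires $d(p_n,p)$ to be below the uniform-continuity modulus $\delta(i_n,\varepsilon)$ of $(S\times T)^{-i_n}$ at level~$\varepsilon$. But $i_n$ is determined by the very choice of $p_n$, so the required smallness of $d(p_n,p)$ depends on a quantity you only learn \emph{after} choosing $p_n$. Since $p\notin A$ forces $|i_n|\to\infty$ along any approximating sequence, there is no a priori bound preventing $\delta(i_n,\varepsilon)$ from decaying faster than $\operatorname{dist}(\Delta_{i_n},p)$; in that regime no admissible $p_n$ exists, and the argument stalls. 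The phrase ``exploit the freedom in choosing $p_n$'' does not break this loop, because the freedom in $d(p_n,p)$ and the constraint coming from $i_n$ are coupled, not independent.

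What the argument in Glasner uses instead (and what you would need here) is the almost-periodicity of points of $M$: since $M$ is minimal, for every neighbourhood $U$ of $q\in M$ the return set $\{h\in H_{S,T}:hq\in U\}$ is \emph{syndetic}, hence meets every coset of a fixed finite set $F\subseteq H_{S,T}$. This replaces the uncontrolled iterates $D_1^{-i_n}$ by elements drawn from a \emph{finite} family (up to composition with returns), for which equicontinuity is automatic. Alternatively, one can phrase the argument via the enveloping semigroup and minimal idempotents. Either route sidesteps the modulus-matching problem rather than trying to solve it head-on; your direct approach, as written, does not close.
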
 
We omit the proof of this fact since it is similar to the one in page 46 of \cite{Glas}.

The main result  concerning this structure is the following. 

\begin{thm}\label{thm:uniquelyergodicmodel}
	Every ergodic system $({X},\mu,{S},{T})$ with commuting transformations ${S}$ and ${T}$ has an extension system $({X'},\mu',{S'},{T'})$ which admits a strictly ergodic model $(\wh{X'},S',T')$ such that $(N_{S',T'}(\wh{X'}), H_{S',T'})$ is also strictly ergodic.
\end{thm}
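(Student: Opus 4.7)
The plan is to reduce first to the magic case, then apply Weiss's relative Jewett--Krieger theorem to lift a strictly ergodic product model of the factor $\Z_{S,T}(X)$ to a model of $X$, and finally verify the strict ergodicity of the $N_{S,T}$ structure in the resulting model. By Theorem \ref{MagicFree}, the (assumed free) ergodic system $(X,\mu,S,T)$ admits a magic extension $(X',\mu',S^{*},T^{*})$ on which the generated group acts freely and ergodically. Since the theorem allows us to replace $X$ by an extension, we assume without loss of generality from the start that $(X,\mu,S,T)$ is itself magic, free, and ergodic under $\langle S,T\rangle$.

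The lemma preceding the theorem supplies a strictly ergodic product model $(Y\times Z,\,S_Y\times\id,\,\id\times T_Z)$ for $(\Z_{S,T}(X),\mu,S,T)$. Applying Theorem \ref{Weiss} to the factor map $X\to\Z_{S,T}(X)$ produces a strictly ergodic topological model $(\wh X,S,T)$ of $X$ together with a topological factor map $\wh\pi\colon\wh X\to Y\times Z$. Minimality of $(N_{S,T}(\wh X),H_{S,T})$ then follows from the proposition preceding the theorem. For unique ergodicity, let $\rho$ be any $H_{S,T}$-invariant probability measure on $N_{S,T}(\wh X)$ and test it against a continuous tensor $F=f_0\otimes f_1\otimes f_2$. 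By the mean ergodic theorem for the $\mathbb{Z}^3$-action $H_{S,T}$,
\[
\int F\,d\rho=\int\lim_{N\to\infty}\frac{1}{N^3}\sum_{a,b,c=0}^{N-1} f_0(S^bT^cx_0)\,f_1(S^{a+b}T^cx_1)\,f_2(S^bT^{a+c}x_2)\,d\rho(x_0,x_1,x_2).
\]
The inner average over $a$ (for fixed $b,c$) is a classical two-commuting-transformation average; Theorem \ref{Cauchy}(4) bounds its $L^2$-deviation by the Host seminorms $\normm{f_1}_{\mu,S,R}$ and $\normm{f_2}_{\mu,T,R}$. Combined with the magic property of $X$ and Theorem \ref{Cauchy}(6), this limit depends only on the projections of $f_1$ and $f_2$ onto $\Z_{S,R}$ and $\Z_{T,R}$. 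Further applications of Theorem \ref{Weiss} to the factors $X\to\Z_{S,R}$ and $X\to\Z_{T,R}$, coordinated with the one already used, ensure that $\wh X$ factors continuously onto strictly ergodic product models of $\Z_{S,R}$ and $\Z_{T,R}$, so these conditional expectations lift to continuous functions on $\wh X$. The remaining averages over $b,c$ are then Birkhoff averages of continuous functions on a strictly ergodic $\mathbb{Z}^2$-system, converging uniformly to a constant depending only on $F$. Hence $\int F\,d\rho$ is independent of $\rho$, establishing unique ergodicity.

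The main obstacle is realizing all three factors $\Z_{S,T},\Z_{S,R},\Z_{T,R}$ as continuous factors of a \emph{single} strictly ergodic model $\wh X$, compatibly with the factor maps produced by Weiss's theorem. This requires iterated, carefully bookkept applications of Theorem \ref{Weiss} together with verification that the magic property and freeness survive the passage to the topological setting. Only after this coordination is complete can the $L^2$-estimates from the Host seminorm theory be upgraded to pointwise identities that hold uniformly on $N_{S,T}(\wh X)$.
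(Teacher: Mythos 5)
Your overall architecture — pass to a magic extension, apply Weiss's relative Jewett--Krieger theorem, then show unique ergodicity of the $N_{S,T}$ structure — is the right one, and you correctly flag that the serious difficulty is realizing all three factors $\Z_{S,T}$, $\Z_{S,R}$, $\Z_{T,R}$ as continuous factors of one and the same strictly ergodic model. However, there are two genuine gaps that prevent the argument from closing.

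\textbf{Triple magic is needed, not just magic.} You reduce to the case where $(X,\mu,S,T)$ is magic (for the pair $(S,T)$) via Theorem \ref{MagicFree}. But your own unique ergodicity argument invokes the Host seminorms $\normm{f_1}_{\mu,S,R}$ and $\normm{f_2}_{\mu,T,R}$ and then concludes via Theorem \ref{Cauchy}(6) that the inner limit depends only on $\mathbb{E}(f_1\mid\Z_{S,R})$ and $\mathbb{E}(f_2\mid\Z_{T,R})$. That conclusion requires $X$ to be magic for the pairs $(S,R)$ and $(T,R)$ as well, not just $(S,T)$. The paper deals with this by iterating the magic-extension construction for all three pairs and passing to an inverse limit (Proposition \ref{exis}), using Lemma \ref{lem:InverseLimitMagic} to show magicness survives inverse limits. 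Your reduction only gives one of the three.

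\textbf{The coordination of models is not a bookkeeping exercise.} You acknowledge that the main obstacle is getting a single model $\wh X$ that factors continuously onto strictly ergodic models of all three $\Z$-factors, and you propose to solve it by ``iterated, carefully bookkept applications of Theorem \ref{Weiss}.'' This does not work as stated: each application of Theorem \ref{Weiss} to a different factor map $X\to\Z_{W_1,W_2}$ produces a potentially different model of $X$, and there is no reason the three resulting models coincide or even jointly factor through a common one. The paper's resolution (Lemma \ref{lem:magic3}) is genuinely non-trivial: it constructs a single topological model of the \emph{joint} factor $X_S\times X_T\times X_R$ by first understanding the joining measure $\nu$ via the common Kronecker-type factor $Z$ (Lemma \ref{lem:UniqueMeasureConcentrated}), building the model inside the subset $\{\widehat{\phi_S}(x_1)+\widehat{\phi_T}(x_2)-\widehat{\phi_R}(x_3)=0\}$ of a product of models, and only then applying Weiss once to lift to $X$. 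Without some version of this, the later parts of your argument (continuity of the conditional expectations, strict ergodicity of the projected system) have nothing to stand on.

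Finally, even granting the above, your unique-ergodicity argument is considerably sketchier than the paper's and follows a different route. You try to compute $\int F\,d\rho$ by a three-fold ergodic average, pulling the inner limit over $a$ out and claiming the remaining double average is a Birkhoff average of continuous functions; but the inner limit $\mathbb{E}(f_1\otimes f_2\mid\I(S\times T))$ is only a measurable function, not continuous, so the outer averages are not uniform Birkhoff averages, and the limit interchange is not justified. The paper avoids this entirely by a disintegration argument: it disintegrates an arbitrary invariant $\lambda$ over $\mu_R$ (using unique ergodicity of $\Q_R(X)$ from Theorem \ref{TH:Q_SU.E.}), shows the fibers $\lambda_{\bold{x}}$ are $\I(S\times T)$-measurable hence $\Z_{S,T}$-measurable (Corollary \ref{equiv}, which also needs triple magic), identifies $(\pi_R)_*\lambda_s=\delta_s$ via Remark \ref{Rem:CoordinatesDetermine}, and pins down $\lambda=\int_{\Z_{S,T}}\theta_s\times m_s\,d\nu_{S,T}(s)$ by uniqueness of disintegration. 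If you want to pursue your averaging route, you would need to re-derive the $\I(S\times T)$-measurability result anyway, at which point you are most of the way to the paper's argument; it is cleaner to use the disintegration directly.
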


We prove this theorem in this section and show in Section \ref{Sec:PointwiseResults} how this result implies Theorems \ref{THM:1} and \ref{THM:2}.

\subsection{Models for Triple magic systems}

The following lemma shows that magic systems pass to the limit:

\begin{lem} \label{lem:InverseLimitMagic}
Let $(X,\mathcal{X},\mu,S,T)$ be the (measurable) inverse limits of the systems \\$(X_i,\mathcal{X}_{i},\mu_i,S_i,T_i), i\in\mathbb{N}$. If $(X_i,\mathcal{X}_{i},\mu_i,S_i,T_i)$ is magic for $S_i$ and $T_i$ for all $i\in \mathbb{N}$, then $(X,\mathcal{X},\mu,S,T)$ is magic for $S$ and $T$. 
\end{lem}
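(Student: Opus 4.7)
The plan is to reduce to the nontrivial direction. By Theorem \ref{Cauchy}(6), the implication $\normm{f}_{\mu,S,T}=0 \Rightarrow \mathbb{E}(f\mid\mathcal{I}(S)\vee\mathcal{I}(T))=0$ holds for every measure preserving system with commuting transformations $S$ and $T$, so I only need to establish the converse.

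Let $\pi_i\colon X\to X_i$ be the factor maps associated to the inverse limit, so that $\mathcal{X}$ is generated (modulo null sets) by the increasing sequence of $\sigma$-algebras $\mathcal{X}_i':=\pi_i^{-1}(\mathcal{X}_i)$. Fix $f\in L^{\infty}(\mu)$ with $\mathbb{E}(f\mid \mathcal{I}(S)\vee\mathcal{I}(T))=0$, and set $f_i:=\mathbb{E}(f\mid \mathcal{X}_i')$, viewed as a function on $X_i$. Since $\pi_i$ is a factor map, the $\sigma$-algebra $\pi_i^{-1}(\mathcal{I}(S_i)\vee\mathcal{I}(T_i))$ is contained both in $\mathcal{X}_i'$ and in $\mathcal{I}(S)\vee \mathcal{I}(T)$. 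Using that $\mathbb{E}(f\mid \mathcal{I}(S)\vee \mathcal{I}(T))=0$ and the tower property, I would conclude that
\[
\mathbb{E}(f_i\mid \mathcal{I}(S_i)\vee\mathcal{I}(T_i))\circ\pi_i=\mathbb{E}(f\mid \pi_i^{-1}(\mathcal{I}(S_i)\vee\mathcal{I}(T_i)))=0,
\]
so $\mathbb{E}(f_i\mid \mathcal{I}(S_i)\vee\mathcal{I}(T_i))=0$ in $L^2(\mu_i)$.

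Now the magic hypothesis on each $X_i$ applies and yields $\normm{f_i}_{\mu_i,S_i,T_i}=0$. By the factor map invariance in Theorem \ref{Cauchy}(5), this gives $\normm{f_i\circ\pi_i}_{\mu,S,T}=0$ for every $i$. Since $f_i\circ\pi_i=\mathbb{E}(f\mid \mathcal{X}_i')$ is uniformly bounded by $\|f\|_{\infty}$ and converges to $f$ in $L^2(\mu)$ as $i\to\infty$, dominated convergence upgrades this to convergence in $L^4(\mu)$. Combining the triangle inequality for the seminorm $\normm{\cdot}_{\mu,S,T}$ (Theorem \ref{Cauchy}(2)) with the bound $\normm{\cdot}_{\mu,S,T}\leq \|\cdot\|_{L^4(\mu)}$, I obtain
\[
\normm{f}_{\mu,S,T}\leq \normm{f-f_i\circ\pi_i}_{\mu,S,T}+\normm{f_i\circ\pi_i}_{\mu,S,T}\leq \|f-f_i\circ\pi_i\|_{L^4(\mu)}\xrightarrow{i\to\infty}0,
\]
which forces $\normm{f}_{\mu,S,T}=0$ and completes the proof.

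There is essentially no serious obstacle here: the argument is a straightforward density/approximation reduction once one verifies that the vanishing of the conditional expectation descends to every factor, which is the only potentially subtle point. The reason it works cleanly is precisely that the relevant sub-$\sigma$-algebra $\mathcal{I}(S_i)\vee\mathcal{I}(T_i)$ sits inside both $\mathcal{X}_i'$ and $\mathcal{I}(S)\vee\mathcal{I}(T)$, and that Host's seminorms are monotone under factor maps and dominated by the $L^4$ norm.
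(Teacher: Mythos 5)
Your proof is correct and follows essentially the same route as the paper's: reduce to the nontrivial implication, show that $\mathbb{E}(f\mid\mathcal{I}(S)\vee\mathcal{I}(T))=0$ forces $\mathbb{E}(f_i\mid\mathcal{I}(S_i)\vee\mathcal{I}(T_i))=0$, apply magicness and factor invariance of the seminorm, then pass to the limit using $\normm{\cdot}_{\mu,S,T}\leq\|\cdot\|_{L^4}$ and uniform boundedness. Your $\sigma$-algebra-containment phrasing of the descent step is a cleaner packaging of the paper's density argument (testing $\mathbb{E}(f\mid X_i)$ against products of $S_i$- and $T_i$-invariant functions), but it is the same idea.
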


\begin{proof}
It suffices to prove that if $f$ is a function on $X$ with $\mathbb{E}(f\vert\I(S)\vee \I(T))=0$,  then $\normm{f}_{\mu,S,T}=0$ (the other implication is always true by Theorem \ref{Cauchy}-(6)). We regard $\mathcal{X}_i$  as the sub $\sigma$-algebra of $\mathcal{X}$ associated to the factor $(X_i,\mathcal{X}_i,\mu_i,S_i,T_i)$. Since $(X,\mathcal{X},\mu,S,T)$ is the inverse limit of $(X_i,\mathcal{X}_{i},\mu_i,S_i,T_i)$, we have that $\mathbb{E}(f\vert\mathcal{X}_i)$ converges in $L^1(\mu)$ to $f$ as $i\to\infty$. By Theorem \ref{Cauchy}-(2),
\[ \normm{f}_{\mu,S,T}=\lim_{i\to\infty} \normm{\mathbb{E}(f\vert\mathcal{X}_i)}_{\mu,S,T}. \]
Since $\mathbb{E}(f\vert\mathcal{X}_i)=\mathbb{E}(f\vert{X}_i)\circ \pi_i$, by Theorem \ref{Cauchy}-(5), it suffices to show that  $\normm{\mathbb{E}(f\vert{X}_i)}_{\mu,S_i,T_i}=0$ for every $i\in \mathbb{N}$. 
 
Since $X_i$ is magic for $S_i$ and $T_i$, it suffices to show that $\mathbb{E}(\mathbb{E}(f\vert{X}_i)\vert\mathcal{I}(S_i)\vee \mathcal{I}(T_i))=0$. By a density argument, it suffices to prove that $\int_{X_i} \mathbb{E}(f\vert{X}_i)(x)\cdot g(x)h(x)d\mu_{i}(x)=0$ for an $S_i$-invariant function $g$ and a $T_i$-invariant function $h$. By definition, we have that
\[ \int_{X_i}  \mathbb{E}(f\vert{X}_i)(x)\cdot g(x)h(x)d\mu_{i}(x)= \int_{X} f\cdot (h\circ \pi_i) \cdot (g\circ \pi_i) d\mu. \]
The latter integral is 0 since $\mathbb{E}(f\vert\I(S)\vee \I(T))=0$.
\end{proof}

\begin{defn}
	Let $(X,\mu,S,T)$ be an ergodic system with commuting transformations $S$ and $T$. We say that $(X,\mu,S,T)$ is \emph{triple magic} if $(X,\mu,S,T)$, $(X,\mu,S,R)$ and $(X,\mu,T,R)$ are magic systems. 
\end{defn}

The existence of triple magic extensions is guarenteed by the following property:

\begin{prop}\label{exis}
Every ergodic system $(X,\mu,S,T)$ with commuting transformations $S$ and $T$ admits a free, ergodic and triple magic extension.
\end{prop}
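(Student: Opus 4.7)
The plan is to build a triple magic extension by iterating Theorem \ref{MagicFree}, cycling through the three pairs $(S,T)$, $(S,R)$, $(T,R)$, and then passing to an inverse limit in the spirit of Lemma \ref{lem:InverseLimitMagic}. The main obstacle is that the magic property is not in general inherited by further extensions: if $X'$ is magic for $(S,T)$ and $X''\to X'$ is any further extension, $X''$ need not remain magic for $(S,T)$. This is why I cannot simply take three successive magic extensions; instead I have to cycle infinitely often through the three pairs and exploit the cofinal invariance of inverse limits.

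Explicitly, I would set $X_0=X$ and inductively construct $X_{n+1}$ as follows. Let $(W_1^{(n)},W_2^{(n)})$ denote the $n$-th term of the cyclically repeated list $(S,T),(S,R),(T,R),(S,T),\ldots$. Maintaining the induction hypothesis that $X_n$ is ergodic and that $G=\langle S,T\rangle$ acts freely on $X_n$, we have in particular that $(W_1^{(n)})^i$ and $(W_2^{(n)})^j$ are not the identity for any nonzero $i,j$, so Theorem \ref{MagicFree} applied to $(X_n,W_1^{(n)},W_2^{(n)})$ produces a free, ergodic extension $X_{n+1}\to X_n$ on which $\langle W_1^{(n)},W_2^{(n)}\rangle=G$ acts freely and which is magic for the pair $(W_1^{(n)},W_2^{(n)})$. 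I would then let $X_\infty$ be the inverse limit of the tower $X_0\leftarrow X_1\leftarrow X_2\leftarrow\cdots$ with the induced $G$-action. Ergodicity passes to the inverse limit by the usual approximation of $G$-invariant sets, and freeness is preserved because any would-be fixed point of $g\in G$ in $X_\infty$ of positive measure projects to a fixed point set of positive measure in each $X_n$.

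It remains to verify the triple magic property for $X_\infty$. Fixing the pair $(S,T)$, the subsequence $(X_{3k+1})_{k\ge 0}$ consists by construction of systems magic for $(S,T)$, and it is cofinal in the original tower, so $X_\infty$ is also the inverse limit of this subsequence. Lemma \ref{lem:InverseLimitMagic} then yields that $X_\infty$ is magic for $(S,T)$. The same argument applied to the cofinal subsequences $(X_{3k+2})_k$ and $(X_{3k+3})_k$ gives the magic property for $(S,R)$ and $(T,R)$ respectively; here one only needs to observe that the proof of Lemma \ref{lem:InverseLimitMagic} was written for the pair $(S,T)$ but goes through verbatim for any commuting pair in $G$, since it uses only properties of conditional expectations and the seminorm estimates from Theorem \ref{Cauchy}. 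This produces a free, ergodic, triple magic extension of $X$, which is exactly the content of Proposition \ref{exis}.
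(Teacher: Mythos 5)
Your proof is correct and matches the paper's argument: both cycle through the three pairs $(S,T)$, $(S,R)$, $(T,R)$ applying Theorem \ref{MagicFree} at each stage, pass to the inverse limit of the resulting tower, and use Lemma \ref{lem:InverseLimitMagic} on the three cofinal subsequences to conclude that the limit is magic for all three pairs. Your added remarks — that one genuinely needs to cycle infinitely because magicity need not survive a further extension, and that Lemma \ref{lem:InverseLimitMagic} applies verbatim to any commuting pair — are correct and make the argument's structure more transparent, but they do not change the underlying method.
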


\begin{proof} 
	Let $(X,\mu,S,T)$ be an ergodic system with commuting transformations $S$ and $T$.
By Theorem \ref{MagicFree}, we can find  a free and ergodic extension  $(Y_1,\mu_{Y_1},S_{Y_1},T_{Y_1})$ which is magic for $S_{Y_1}$ and $T_{Y_1}$. Let $R_{Y_1}=S_{Y_1}^{-1}T_{Y_1}$. Then $(Y_1,\mu_{Y_1},S_{Y_1},T_{Y_1},R_{Y_1})$ is an extension of $(X,\mu,S,T,R)$. 

We can then find a free and ergodic extension $(W_1,\mu_{W_1},S_{W_1},R_{W_1})$ of $(Y_1,\mu_{Y_1},S_{Y_1},R_{Y_1})$ which is magic for $S_{W_1}$ and $R_{W_1}$. Let $T_{W_1}=S_{W_1}R_{W_1}$. Then $(W_1,\mu_{W_1},S_{W_1},T_{W_1},R_{W_1})$ is an extension of $(Y_1,\mu_{Y_1},S_{Y_1},T_{Y_1},R_{Y_1})$.

 Similarly, we can find a free and ergodic extension $(Z_1,\mu_{Z_1},T_{Z_1},R_{Z_1})$ of $(W_1,\mu_{W_1},T_{W_1},R_{W_1})$ which is magic for $T_{Z_1}$ and $R_{Z_1}$. Let $S_{Z_1}=T_{Z_1}R_{Z_1}$. Then $(Z_1,\mu_{Z_1},S_{Z_1},T_{Z_1},R_{Z_1})$ is an extension of $(W_1,\mu_{W_1},S_{W_1},T_{W_1},R_{W_1})$. We can then find a free ergodic extension $(Y_{2},\mu_{Y_2}, S_{Y_2},T_{Y_2})$ of $(Z_{1},\mu_{Z_1},S_{Z_1},T_{Z_1})$ which is magic for $S_{Y_2}$ and $T_{Y_2}$. 
 
 Repeating the process, we find a sequence of extensions $Y_i$, $W_i$ and $Z_i$ such that $Y_i$ is magic for $S_{Y_i}$ and $T_{Y_i}$, $W_i$ is magic for $S_{W_i}$ and $R_{W_i}$ and $Z_i$ is magic for $T_{Z_i}$ and $R_{Z_i}$. By Lemma \ref{lem:InverseLimitMagic}, their inverse limit $Y=\lim\limits_{\leftarrow} Y_i =\lim\limits_{\leftarrow} W_i=\lim\limits_{\leftarrow} Z_i$ is free, ergodic and magic for $S_{Y}$ and $T_{Y}$, for $S_{Y}$ and $R_{Y}$ and for $T_{Y}$ and $R_{Y}$.
\end{proof}

In the rest of this section, we assume $X$ is a free, ergodic and triple magic system obtained by Proposition \ref{exis}.
We review some properties of this system (see Chu \cite{Chu}, Section 4.2 for further details). For $W=S,T,R$, recall that $X_W$ is the factor associated to $\I(W)$. Let $\pi'_W\colon X\to X_{W}$, be the corresponding factor map.  Let $Y=X_S \times X_T \times X_R$ be endowed with the product $\sigma$-algebra and let $\pi\colon X\to X_S\times X_T\times X_R$ be the map given by $\pi(x)=(\pi'_{S}x,\pi'_{T}x,\pi'_{R}x)$. The transformations $S$, $T$ and $R$ are mapped to $S_Y=\id\times S\times T$, $T_Y=T\times \id \times T$ and $R_Y=T\times S^{-1}\times \id$, respectively.   
Let $\nu$ be the image of $\mu$ under the map $\pi$. Then the factor of $X$ associated to the $\sigma$-algebra $\I(S)\vee \I(T)\vee \I(R)$ is isomorphic to $(X_S\times X_T\times X_R, \nu)$. Let $Z$ be the factor spanned by the common eigenvalues of $S,T$ and $R$. Let $m$ be the image of $\nu$ on $Z\times Z\times Z$. 
Then $\nu$ is the conditionally independent product over $Z\times Z\times Z$. 

The proof of the following lemma is contained implicitly in Propositions 4.4 and 4.5 of \cite{Chu}:
\begin{lem} \label{lem:UniqueMeasureConcentrated}
	Let $\nu'$ be a measure on $X_S\times X_T\times X_R$ ergodic for the transformations $S_Y$ and $T_Y$. Let $m'$ be the image of $\nu'$ on $Z\times Z\times Z$. Then
	\[ \int_{X_S\times X_T\times X_R} f_1\otimes f_2\otimes f_3 d\nu'= \int_{Z\times Z\times Z} \mathbb{E}(f_1\vert Z)\otimes\mathbb{E}(f_2\vert Z)\otimes\mathbb{E}(f_3\vert Z) dm'. \] Moreover, there exists $c\in Z$ such that $m'$ is concentrated on the set \[ \{(z_1,z_2,z_3)\colon z_1+z_2-z_3=c\}\subseteq Z\times Z\times Z.\]   
\end{lem}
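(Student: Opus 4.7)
The plan is to handle the two assertions in reverse order, first characterizing the support of $m'$ on $Z^3$ and then using that description together with the triple magic hypothesis to deduce the conditional independence formula. For the support, observe that since $Z$ is a compact abelian group built from common eigenvalues of $S,T,R$, these transformations act on $Z$ by translations, and hence the induced actions of $S_Y=\id\times S\times T$ and $T_Y=T\times\id\times T$ on $Z^3$ are again translations. Because $\nu'$ is ergodic under $\langle S_Y,T_Y\rangle$, its pushforward $m'$ is ergodic under these translations, so by the standard structure theorem for ergodic rotation-invariant measures on compact abelian groups, $m'$ is Haar measure on a coset $c+H$ with $H\leq Z^3$ a closed subgroup. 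To pin down $H$ I would compute the translation vectors defining $S_Y$ and $T_Y$ on $Z^3$: using the relation $R=S^{-1}T$ together with the natural identifications of $Z$ inside each of $X_S$, $X_T$, $X_R$, both translation vectors satisfy the linear constraint $z_1+z_2-z_3=0$. Hence $H$ lies in this hyperplane and $m'$ is supported on a translate $\{z_1+z_2-z_3=c\}$ of it.

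For the conditional independence formula, by multilinearity it suffices to show that $\int f_1\otimes f_2\otimes f_3\,d\nu'=0$ whenever $\mathbb{E}(f_i|Z)=0$ for some $i$; say $i=1$. Since $X$ is triple magic, the vanishing of $\mathbb{E}(f_1|Z)$ together with Theorem~\ref{Cauchy}(6) yields $\normm{f_1}_{\mu,W_1,W_2}=0$ for the appropriate pair $(W_1,W_2)$ (the point being that $Z$ captures exactly the non-vanishing content of these seminorms on $X_S$). To convert this into a bound against $\nu'$, I would use the disintegration $\nu'=\int \nu'_z\,dm'(z)$ supplied by the first step and argue that the fiber measures $\nu'_z$ coincide with the fiber measures of the reference measure $\nu$ over the corresponding $Z$-fiber, once the translation class $c$ is accounted for. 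The Cauchy--Schwartz type inequality of Theorem~\ref{Cauchy}(1), applied fiberwise, then produces the desired vanishing and gives the stated formula.

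The main obstacle is precisely this transfer step: Theorem~\ref{Cauchy} furnishes seminorm estimates relative to the master measure $\mu$, whereas the integral of interest is against the a priori different ergodic measure $\nu'$. The saving observation is that by the first step, $\nu'$ structurally agrees with $\nu$ except for its projection to $Z^3$, which is pinned down by the single constant $c$; outside this Kronecker datum the conditional structure is forced by the triple magic hypothesis. Once this identification is made rigorous (which is the technical heart of the argument), the conditional independence formula follows by applying the Host seminorm bounds after translating by an element of the group action representing $c$.
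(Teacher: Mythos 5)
The paper does not prove this lemma itself; it defers to Propositions 4.4 and 4.5 of Chu's paper, so the comparison is between your plan and the (cited) argument there.

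Your first step is sound in outline. Since $\phi_S\times\phi_T\times\phi_R$ intertwines $\langle S_Y,T_Y\rangle$ with a pair of rotations of $Z^3$, the pushforward $m'$ of an ergodic $\nu'$ is an ergodic rotation-invariant measure, hence Haar on a coset of the closed subgroup generated by the two translation vectors. To conclude that this subgroup lies in $\{z_1+z_2-z_3=0\}$ you do need to know that the rotations of $Z$ induced by $T$ on $X_S$, by $S$ on $X_T$, and by $S=T$ on $X_R$ are literally equal; this is exactly where the precise meaning of ``$Z$ is spanned by the common eigenvalues'' enters, and it is a nontrivial input (it is the content of Chu's Proposition 4.5), but it is not a gap in logic, just a point that would need to be made explicit.

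Your second step has a genuine gap, and it is the one you identify but do not close. First, a local problem: for $f_1$ on $X_S$, the lift $f_1\circ\pi_S'$ is $S$-invariant, hence already $\I(S)\vee\I(T)$-measurable; so the triple magic property applied with $(W_1,W_2)=(S,T)$ gives nothing from $\mathbb{E}(f_1\vert Z)=0$. The only usable pair is $(T,R)$, and extracting $\normm{f_1\circ\pi_S'}_{\mu,T,R}=0$ from $\mathbb{E}(f_1\vert Z)=0$ requires the identity $\I(S)\cap\bigl(\I(T)\vee\I(R)\bigr)=Z$ on the nose, which is itself part of the structure theory you are trying to establish. Second, and more seriously, the transfer to $\nu'$ does not work as written. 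The Cauchy--Schwartz inequality of Theorem~\ref{Cauchy}(1) controls integrals against the specific measure $\mu_{W_1,W_2}$ on $X^4$; it says nothing about an arbitrary ergodic measure $\nu'$ on $X_S\times X_T\times X_R$, and there is no ``fiberwise'' version of it to invoke. Your proposed bridge, namely ``the fiber measures $\nu'_z$ coincide with the fiber measures of $\nu$ once the translate $c$ is accounted for,'' is precisely the conditional-independence conclusion restated: if $\nu'_z$ agrees with a translate of $\nu_z$ and $\nu$ splits conditionally over $Z^3$, then so does $\nu'$, but nothing in the first step (which only pins down the marginal $m'$, not the fibers) forces such agreement. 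The actual route, following Chu, is to run a van der Corput/ergodic-averaging argument \emph{directly} against $\nu'$, using its $S_Y,T_Y$-invariance and the fact that its one-dimensional marginals are forced to be $\nu_S,\nu_T,\nu_R$ by ergodicity; the seminorm quantities then arise from that averaging rather than being imported from the master measure $\mu$. As it stands, your second step assumes what it needs to prove.
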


We are now ready to introduce the topological model needed for our question:

\begin{lem}\label{lem:magic3}
	There exist strictly ergodic models $\widehat{\Z_{S,T}}$, $\widehat{\Z_{S,R}}$, $\widehat{\Z_{T,R}}$ for $\Z_{S,T}$, $\Z_{S,R}$, $\Z_{T,R}$ and a strictly ergodic model $\widehat{X}$ for $X$ such that $\widehat{X}\to \widehat{\Z_{S,T}} $, $\widehat{X}\to \widehat{\Z_{S,R}} $ and $\widehat{X}\to \widehat{\Z_{T,R}}$ are topological models for $X\to \Z_{S,T}$, $X\to \Z_{S,R}$ and $X\to \Z_{T,R}$ respectively.
\end{lem}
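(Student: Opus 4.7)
The plan is to construct a strictly ergodic model $\widehat{Y}$ of the joint factor $Y = X_S \times X_T \times X_R$ (with measure $\nu$) that sits naturally inside a product and admits continuous coordinate projections onto strictly ergodic models of each $\Z_{W_1, W_2}$, and then lift to $\widehat{X}$ using Theorem \ref{Weiss} applied to the factor map $\pi \colon X \to Y$.

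First I would take $\widehat{Z}$ to be $Z$ itself, viewed as a compact abelian group with its minimal rotation, which is its canonical strictly ergodic model. Viewing each $X_W$ ($W = S, T, R$) as a $\mathbb{Z}$-system under its nontrivially-acting transformation, the map $X_W \to Z$ is a factor map of ergodic systems; Theorem \ref{Weiss} then yields strictly ergodic models $\widehat{X_W}$ equipped with continuous factor maps $q_W \colon \widehat{X_W} \to Z$. Setting $\widehat{\Z_{S,T}} = \widehat{X_S} \times \widehat{X_T}$ with the $G$-action $(\id \times S, T \times \id)$, and analogously for the other two pairs, produces strictly ergodic product models of $\Z_{W_1,W_2}$ in the spirit of Lemma \ref{2p} and the unnumbered product-model lemma that precedes it (unique ergodicity comes from each factor being uniquely ergodic under its nontrivial generator, and minimality comes from the orbit of any $(x,y)$ already being dense by minimality of each factor).

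Next I would define the closed, $G$-invariant set
\[ M = \bigl\{ (y_S, y_T, y_R) \in \widehat{X_S} \times \widehat{X_T} \times \widehat{X_R} \colon q_S(y_S) + q_T(y_T) - q_R(y_R) = c \bigr\}, \]
where $c \in Z$ is the constant provided by Lemma \ref{lem:UniqueMeasureConcentrated} for the ergodic measure $\nu$. The measure-theoretic isomorphisms $X_W \cong \widehat{X_W}$ identify $\nu$ with an ergodic invariant measure $\widehat{\nu}$ on the topological product, and Lemma \ref{lem:UniqueMeasureConcentrated} forces $\widehat{\nu}$ to sit on $M$. Any ergodic invariant measure on $M$ must, by the same lemma, share the constant $c$, so $M$ is uniquely ergodic; letting $\widehat{Y}$ be the support of $\widehat{\nu}$, any minimal subsystem of $\widehat{Y}$ carries an invariant measure which must agree with $\widehat{\nu}$, so the support of $\widehat{\nu}$ is itself minimal. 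Thus $\widehat{Y}$ is a strictly ergodic model of $Y$, and the coordinate projections $\widehat{Y} \to \widehat{\Z_{W_1, W_2}}$ are continuous, $G$-equivariant and surjective (the image is closed and invariant, hence all of the minimal $\widehat{\Z_{W_1,W_2}}$).

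Finally I would apply Theorem \ref{Weiss} to $\pi \colon X \to Y$ with the prescribed model $\widehat{Y}$, obtaining a strictly ergodic model $\widehat{X}$ of $X$ together with a continuous factor map $\widehat{X} \to \widehat{Y}$; composing with the coordinate projections gives continuous factor maps $\widehat{X} \to \widehat{\Z_{W_1,W_2}}$ that realize the measurable factor maps $X \to \Z_{W_1, W_2}$. The principal obstacle is strict ergodicity of $\widehat{Y}$: unique ergodicity of $M$ reduces to Lemma \ref{lem:UniqueMeasureConcentrated}, whereas minimality is obtained only after passing to the support of the unique invariant measure. A secondary technical point is the freeness hypothesis in Theorem \ref{Weiss} at each application, which is addressed by working with $X_W$ as a $\mathbb{Z}$-system under its nontrivial generator (where freeness either holds or reduces to the periodic case already excluded).
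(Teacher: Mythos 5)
Your proof follows the paper's approach essentially step by step: build strictly ergodic models of $X_W$ above $Z$ via Weiss's theorem, realize $Y$ inside the product $\widehat{X_S}\times\widehat{X_T}\times\widehat{X_R}$ cut out by the coset relation from Lemma~\ref{lem:UniqueMeasureConcentrated}, and lift to $\widehat{X}$ with Weiss once more (the paper takes a minimal subsystem and then identifies the measure, whereas you first identify the measure and then pass to its support, which amounts to the same thing). The only spot where the paper is slightly more explicit is the unique ergodicity of the constraint set: knowing that every ergodic measure projects into the same coset of $Z^3$ is not by itself enough; one must also observe, as the paper does with ``and therefore is equal to $m$'', that this projection is then forced to be the Haar measure of the coset, after which the display formula in Lemma~\ref{lem:UniqueMeasureConcentrated} determines the measure.
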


\begin{rem}
	By Theorem \ref{Weiss}, we can always find topological models for the factor maps $X\to \Z_{S,T}$, $X\to \Z_{S,R}$ and $X\to \Z_{T,R}$, but we need that the topological model for $X$ in those three factors maps to be the same.
\end{rem} 

\begin{proof}[Proof of Lemma \ref{lem:magic3}]
	We remark that we can endow $Z$ with a natural topological structure (a compact abelian group). Let $\phi_S\colon X_S\to Z$, $\phi_T\colon X_T\to Z$ and $\phi_R\colon X_R\to Z$ be the factor maps. By Lemma \ref{lem:UniqueMeasureConcentrated} (and Proposition 4.5 in \cite{Chu}),  we may assume that
	\[ \int_{X_S\times X_T\times X_R} f_1\otimes f_2\otimes f_3 d\nu= \int_{Z\times Z\times Z} \mathbb{E}(f_1\vert Z)\otimes\mathbb{E}(f_2\vert Z)\otimes\mathbb{E}(f_3\vert Z) dm, \] 
	where $m$ is the Haar measure of the subgroup $H=\{(z_1,z_2,z_3):z_1+z_2-z_3=0\}\subseteq Z\times Z\times Z$. 
	By Theorem \ref{Weiss}, we can find strictly ergodic models $\widehat{\phi_S}\colon \widehat{X_S}\to Z$, $\widehat{\phi_T}\colon \widehat{X_T}\to Z$ and $\widehat{\phi_R}\colon \widehat{X_R}\to Z$ for the factor maps $\phi_S$, $\phi_T$ and $\phi_R$, respectively. 
	
	Let $\widehat{Y}$ be a minimal subsystem of 
	$$\Bigl\{ (x_1,x_2,x_3) \in \widehat{X}_S\times \widehat{X}_T \times \widehat{X}_R\colon \widehat{\phi_S}(x_1)+\widehat{\phi_T}(x_2)-\widehat{\phi_R}(x_3)=0\Bigr\}$$ for the transformations $S_{\widehat{Y}}=\id\times S \times T$ and $T_{\widehat{Y}}=T\times \id \times T$. By Lemma \ref{lem:UniqueMeasureConcentrated}, the projection of any ergodic measure on $\widehat{Y}$ is concentrated on $H$ and therefore is equal to $m$. So $(\widehat{Y},S_{\widehat{Y}},T_{\widehat{Y}})$ is a strictly ergodic model for $(Y,\nu,S_Y,T_Y)$. The projections into two different coordinates are topological models for the corresponding measurable projections. We get the announced result by taking a strictly ergodic model for the factor map $X\to Y$. 
\end{proof}

The following is the key property of this model (recall that $\tilde{\pi}=\pi_R\times \pi_T \times \pi_S$ is the projection from $X^3$ onto $\mathcal{Z}_{S,T}\times \mathcal{Z}_{S,R} \times \mathcal{Z}_{T,R}$). To ease notation we consider from the beginning that $X$ is its topological model given by Lemma \ref{lem:magic3} so all factors considered are topological and we omit writing ~$\widehat{\cdot}$ ~ everywhere.    

\begin{lem}\label{nz}
	Under the assumption of Lemma \ref{lem:magic3},
	$\Bigl(\tilde{\pi}\bigl(N_{S,T}({X})\bigr),\tilde{\pi}H_{S,T}\Bigr)$ is strictly ergodic.
	(Here $\tilde{\pi}H_{S,T}$ is the projection of $H_{S,T}$ onto $\tilde{\pi}\bigl(N_{S,T}({X})\bigr)$)  
\end{lem}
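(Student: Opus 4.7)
The plan is to identify $\tilde\pi(N_{S,T}(X))$ with a finite product of strictly ergodic $\mathbb{Z}$-systems carrying the product $\mathbb{Z}^{3}$-action, and then to quote the classical fact that such a product is itself strictly ergodic. The real work is bookkeeping: unwinding the isomorphisms in Lemma~\ref{2p} so that the decoupling of the action becomes visible.

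First I would use Lemma~\ref{2p} to identify $\mathcal{Z}_{S,T}\cong X_{S}\times X_{T}$, $\mathcal{Z}_{S,R}\cong X_{S}\times X_{R}$, and $\mathcal{Z}_{T,R}\cong X_{T}\times X_{R}$, so that under these identifications $\pi_{R}$, $\pi_{T}$, $\pi_{S}$ become $\pi'_{S}\times\pi'_{T}$, $\pi'_{S}\times\pi'_{R}$, $\pi'_{T}\times\pi'_{R}$ respectively. Using $\pi'_{S}\circ S=\pi'_{S}$, $\pi'_{T}\circ T=\pi'_{T}$, and $\pi'_{R}\circ T^{i}=\pi'_{R}\circ S^{i}R^{i}=\pi'_{R}\circ S^{i}$, a direct computation shows that
\[
\tilde\pi(x,S^{i}x,T^{i}x)=(\pi'_{S}x,\pi'_{T}x,\pi'_{S}x,\pi'_{R}(S^{i}x),\pi'_{T}x,\pi'_{R}(S^{i}x)).
\]
By continuity these coordinate coincidences persist on all of $N_{S,T}(X)$, so $\tilde\pi(N_{S,T}(X))$ lies inside the ``diagonal'' subset $\{(a,b,a,c,b,c)\}\subseteq(X_{S}\times X_{T})\times(X_{S}\times X_{R})\times(X_{T}\times X_{R})$, which is homeomorphic to $X_{S}\times X_{T}\times X_{R}$.

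Under this identification I would next determine the action induced by $H_{S,T}$. Exploiting that $S$ is trivial on $X_{S}$, $T$ is trivial on $X_{T}$, and $T|_{X_{R}}=S|_{X_{R}}$ (since $R$ is trivial on $X_{R}$), the three generators $(S,S,S),(T,T,T),(\id,S,T)$ descend on $X_{S}\times X_{T}\times X_{R}$ to $(\id,S,S),(T,\id,S),(\id,\id,S)$ respectively. These span the full product $\mathbb{Z}^{3}$-action in which $T$ acts on the $X_{S}$-coordinate alone, $S$ on the $X_{T}$-coordinate alone, and $S$ on the $X_{R}$-coordinate alone. By Lemma~\ref{lem:magic3} each $\mathcal{Z}_{W_{1},W_{2}}$ is strictly ergodic in the chosen model, so each of its topological factors $(X_{S},T),(X_{T},S),(X_{R},S)$ is strictly ergodic as well.

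To finish, minimality of $\tilde\pi(N_{S,T}(X))$ is immediate from minimality of $(N_{S,T}(X),H_{S,T})$. Projecting the dense $H_{S,T}$-orbit of a diagonal point $(x_{0},x_{0},x_{0})$ produces the set $\{(T^{\beta}\pi'_{S}x_{0},S^{\alpha}\pi'_{T}x_{0},S^{\gamma}\pi'_{R}x_{0}):\alpha,\beta,\gamma\in\mathbb{Z}\}$ in the diagonal identification, and its closure is all of $X_{S}\times X_{T}\times X_{R}$ because each coordinate orbit is already dense by the previous step. A product $\mathbb{Z}^{d}$-action on a finite product of uniquely ergodic $\mathbb{Z}$-systems is uniquely ergodic (the only invariant probability measure is the product of the unique invariant measures on the factors), so strict ergodicity follows. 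The only obstacle I foresee is performing the coordinate identifications with consistent conventions; once the decoupling into a product $\mathbb{Z}^{3}$-action is visible, the rest is essentially formal.
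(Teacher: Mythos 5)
Your proposal is correct and follows essentially the same route as the paper: project into $\mathcal{Z}_{S,T}\times\mathcal{Z}_{S,R}\times\mathcal{Z}_{T,R}$, use Lemma \ref{2p} to identify coincident coordinates and realize $\tilde\pi(N_{S,T}(X))$ inside $X_S\times X_T\times X_R$, observe that the projected generators of $H_{S,T}$ decouple into an independent $\mathbb{Z}^3$-product action on the three strictly ergodic factors, and conclude by strict ergodicity (hence minimality) of the product. The paper concludes $\tilde\pi(N_{S,T}(X))$ is the full product by quoting unique ergodicity of products from \cite{DS}, whereas you also spell out the density-of-orbits minimality argument explicitly, but the underlying mechanism is the same.
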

\begin{proof}
	By Lemma \ref{2p} and \ref{lem:magic3}, the factors 
	$({\Z_{S,T}}\cong{X}_{S}\times {X}_{T}, \id\times S, T \times \id)$, $({\Z_{S,R}}\cong {X}_{S}\times {X}_{R}, \id\times T, T \times \id)$ and $({\Z_{T,R}}\cong {X}_{T}\times {X}_{R}, S\times \id, \id \times S)$  are strictly ergodic systems. Here we slightly abuse notation and write with the same letters the projections of $S$ and $T$ onto the factors $X_S$, $X_T$, $X_R$ etc.  
	We have the isomorphism
	\begin{equation}\nonumber
	\begin{split}
	& {\Z_{S,T}}\times {\Z_{S,R}}\times {\Z_{T,R}}\cong {X}_{S}\times {X}_{T}\times {X}_{S}\times {X}_{R}\times {X}_{T}\times {X}_{R},
	\\&                                             \id\times S\times T\leftrightarrow \id\times \id\times      \id\times        S\times       \id\times S,
	\\&                                             S\times S\times S\leftrightarrow \id\times S\times       \id\times        S\times       S\times S,
	\\&                                             T\times T\times T\leftrightarrow  T\times \id\times      T\times         S\times       \id\times S.
	\end{split}
	\end{equation} 
	
	Since $ N_{S,T}({X})$ is the orbit closure of diagonal points, it is easy to see that $\tilde{\pi}\bigl(N_{S,T}({X})\bigr)$ is a subsystem of ${\Z_{S,T}}\times {\Z_{S,R}}\times {\Z_{T,R}}$ whose 1,2,4-th coordinates are the same as the 3,5,6-th coordinates, respectively. 
	So $\tilde{\pi}(N_{S,T}({X}))$ is isomorphic to a subsystem of
	${X}_{S}\times {X}_{T}\times {X}_{R}$. The group $H_{S,T}$ is generated by  
	$\id\times S\times T$, $S\times S\times S$ and $T\times T\times T$
	and their projection onto $\tilde{\pi}\bigl(N_{S,T}({X})\bigr)$ is then generated by $\id\times \id\times S$, $\id\times T\times S$ and $T\times \id\times S$. 

But the group generated by $\id\times \id\times S$, $\id\times T\times S$ and $T\times \id\times S$ is the same as the one generated by $\id\times \id\times S$, $\id\times T\times \id$ and $\id \times \id\times S$, so the system $\Bigl(\tilde{\pi}\bigl(N_{S,T}({X})\bigr),\tilde{\pi}H_{S,T}\Bigr)$ is isomorphic to a subsystem of $({X}_{S}\times {X}_{T}\times {X}_{R}, \id\times \id\times S, \id\times T\times \id,\id \times \id\times S)$. But this latter system is a product of three strictly ergodic systems and thus it is strictly ergodic as well (see for instance \cite{DS}, Section 4).      	
We conclude that $\Bigl(\tilde{\pi}\bigl(N_{S,T}({X})\bigr),\tilde{\pi}H_{S,T}\Bigr)$ is actually isomorphic to $({X}_{S}\times {X}_{T}\times {X}_{R}, \id\times \id\times S, \id\times T\times \id,\id \times \id\times S)$ and we are done.
\end{proof}

\begin{rem} \label{Rem:CoordinatesDetermine}
It is worth noting that in the projections into $\Z_{S,T}$ and $\Z_{T,R}$ determine the projection into $X_S$, $X_T$ and $X_R$. Consequently, they determine the projection into $\Z_{S,T}$. 
\end{rem}

\subsection{Strictly ergodic model for $N_{S,T}(X)$}

By Lemma \ref{nz}, (if $X$ is its model in Lemma \ref{lem:magic3}) there is a unique invariant measure  $\xi$ on $\bigl(\tilde{\pi}(N_{S,T}(X)),\tilde{\pi}H_{S,T}\bigr)$. The projection of $\xi$ into the first coordinate is the unique invariant measure $\nu_{S,T}$ on ${\Z_{S,T}}$, so we may consider the disintegration of $\xi$ over $\nu_{S,T}$. 
\begin{equation} \label{Eq:xi}
\xi=\int_{\Z_{S,T}}\d_{s}\times \eta_{s} d\nu_{S,T}(s)
\end{equation}

To study further this disintegration we need some lemmas. 

\begin{lem}  \label{Lem:boundMeasurable} Let $f_0,f_1\in L^{\infty}(\mu)$ with $\Vert{f_0}\Vert_{\infty}\leq 1$ and $\Vert{f_1\Vert}_{\infty}\leq 1$. Then
\[\Vert \mathbb{E}(f_0 \otimes f_1\vert \mathcal{I}(S\times T) \Vert_{L^2(\mu_R)} \leq \min\{ |||f_0|||_{\mu,R,S},|||f_1|||_{\mu,R,T} \}.  \]
\end{lem}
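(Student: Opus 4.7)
The plan is to combine the mean ergodic theorem on $(X^{2},\mu_{R})$ with two applications of Cauchy--Schwarz, reducing the bound to the defining formula for the Host seminorm.

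First, I would check that $S\times T$ preserves $\mu_R$; this is immediate from $S\times T=(S\times S)(\id\times R)$ together with the invariance properties of $\mu_R$ recalled in Section \ref{Subsec:Magic}. The mean ergodic theorem for $S\times T$ on $L^{2}(\mu_{R})$ then identifies $\mathbb{E}(f_0\otimes f_1\vert \I(S\times T))$ as the $L^{2}(\mu_{R})$-limit of the Ces\`aro averages of $(f_0\otimes f_1)\circ (S\times T)^{i}$. Expanding the squared norm as an inner product and unwinding the definition of $\mu_R$, this gives
\[ \bigl\|\mathbb{E}(f_0\otimes f_1\vert \I(S\times T))\bigr\|_{L^{2}(\mu_R)}^{2}=\lim_{N\to\infty}\frac{1}{N}\sum_{i=0}^{N-1}\int_X \mathbb{E}(f_0\cdot f_0\circ S^{i}\vert \I(R))\cdot \mathbb{E}(f_1\cdot f_1\circ T^{i}\vert \I(R))\,d\mu. \]

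Next, I would apply Cauchy--Schwarz pointwise in $i$ together with the bound $\|\mathbb{E}(f_1\cdot f_1\circ T^{i}\vert \I(R))\|_{\infty}\leq \|f_1\|_{\infty}^{2}\leq 1$ to drop the $f_1$-factor, and then Cauchy--Schwarz on the Ces\`aro sum in $i$. This yields
\[ \bigl\|\mathbb{E}(f_0\otimes f_1\vert \I(S\times T))\bigr\|_{L^{2}(\mu_R)}^{2} \leq \Bigl(\limsup_{N\to\infty}\frac{1}{N}\sum_{i=0}^{N-1}\bigl\|\mathbb{E}(f_0\cdot f_0\circ S^{i}\vert \I(R))\bigr\|_{L^{2}(\mu)}^{2}\Bigr)^{1/2}. \]
The final step would identify the right-hand quantity with $\normm{f_0}_{\mu,R,S}^{2}$: using the symmetry $\normm{\cdot}_{\mu,R,S}=\normm{\cdot}_{\mu,S,R}$ from Theorem \ref{Cauchy}(2) and unwinding the definition, one checks $\normm{f_0}_{\mu,R,S}^{4}=\|\mathbb{E}(f_0\otimes f_0\vert \I(S\times S))\|_{L^{2}(\mu_{R})}^{2}$, and a second use of the mean ergodic theorem --- now for $S\times S$ on $L^{2}(\mu_{R})$ --- rewrites this squared norm as precisely the Ces\`aro limit above. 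This yields $\|\mathbb{E}(f_0\otimes f_1\vert \I(S\times T))\|_{L^{2}(\mu_{R})}\leq \normm{f_0}_{\mu,R,S}$.

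The bound by $\normm{f_1}_{\mu,R,T}$ is obtained by the parallel argument: drop the factor $\|\mathbb{E}(f_0\cdot f_0\circ S^{i}\vert \I(R))\|_{\infty}\leq 1$ instead and run the same estimate with $T$ in place of $S$. Taking the minimum of the two bounds gives the claim. I expect the only delicate point to be the identification of the Ces\`aro mean with the fourth power of the Host seminorm, which is a direct unwinding of the definitions; all other steps are standard Hilbert-space inequalities.
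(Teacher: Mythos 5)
Your proof is correct, and it arrives at the same Cesàro average as the paper's argument, but by a cleaner route. The paper estimates $\lim_N\bigl\|\frac{1}{N}\sum_i(f_0\otimes f_1)\circ(S^i\times T^i)\bigr\|_{L^2(\mu_R)}$ via the van der Corput lemma; you avoid van der Corput altogether, noting that the projection identity $\|\mathbb{E}(g\vert\mathcal{F})\|_{L^2}^2=\langle g,\mathbb{E}(g\vert\mathcal{F})\rangle_{L^2}$ together with the mean ergodic theorem produces the $i$-Cesàro average directly, as an exact identity rather than an upper bound. (The $S\times T$-invariance of $\mu_R$, which makes the correlations independent of $i$, is precisely what renders van der Corput superfluous here.) The paper then invokes Theorem \ref{Cauchy}-(3) to identify the resulting Cesàro mean with the Host seminorm, whereas you re-derive the needed identity $\normm{f_0}^4_{\mu,R,S}=\lim_N\frac{1}{N}\sum_i\bigl\|\mathbb{E}(f_0\cdot f_0\circ S^i\vert\I(R))\bigr\|^2_{L^2(\mu)}$ from the definition of $\mu_{R,S}$ by a second run of the same inner-product/mean-ergodic argument. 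Your bookkeeping of exponents is also tighter: the extra Cauchy--Schwarz on the Cesàro sum in $i$, passing from $\frac{1}{N}\sum_i a_i$ to $\bigl(\frac{1}{N}\sum_i a_i^2\bigr)^{1/2}$, is exactly what is needed to land on $\normm{f_0}_{\mu,R,S}$ itself, a step that the paper's version (relying on Theorem \ref{Cauchy}-(3) as stated, whose exponents do not match the homogeneity count) glides over. One cosmetic remark: the symmetry $\normm{\cdot}_{\mu,R,S}=\normm{\cdot}_{\mu,S,R}$ you invoke is not actually needed for $\normm{f_0}^4_{\mu,R,S}=\|\mathbb{E}(f_0\otimes f_0\vert\I(S\times S))\|^2_{L^2(\mu_R)}$; that is immediate from the construction of $\mu_{R,S}$ as the relative square of $\mu_R$ over $\I(S\times S)$.
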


\begin{proof}
By the Von Neumann Ergodic Theorem, we have that
\[\left \Vert \mathbb{E}(f_0 \otimes f_1\vert \mathcal{I}(S\times T) \right \Vert_{L^2(\mu_R)}=\lim_{N\to\infty} \left \Vert \frac{1}{N} \sum_{i=0}^{N}  f_0 \otimes f_1(S^i \times T^i) \right \Vert_{L^2(\mu_R)}.   \]
Applying van der Corput Lemma (see \cite{HK05} Appendix D for example), this limit average is bounded by 

\[\limsup_{H\to \infty} \frac{1}{H}\sum_{h=0}^{H-1} \Bigl\vert\limsup_{N\to \infty} \frac{1}{N} \sum_{i=0}^{N-1} \int_{X^2} f_0 \otimes f_1(S^{h+i} \times T^{h+i})\cdot f_0 \otimes f_1(S^i \times T^i)d\mu_R\Bigr\vert.   \]

Using the invariance of $\mu_R$ under $S\times T$ this expression equals 

\[\limsup_{H\to \infty}\frac{1}{H} \sum_{h=0}^{H-1} \Bigl\vert\limsup_{N\to \infty} \frac{1}{N} \sum_{i=0}^{N-1} \int_{X^2} f_0\cdot f_0\circ S^{h} \otimes  f_1 \cdot f_1\circ T^h d\mu_R \Bigr\vert.  \]

On the other hand, \[\int_{X^2} f_0\cdot f_0\circ S^{h} \otimes  f_1 \cdot f_1\circ T^h d\mu_R=\int_{X} \mathbb{E}(f_0\cdot f_0\circ S^h\vert \mathcal{I}(R))\mathbb{E}(f_1\cdot f_1\circ T^h \vert \mathcal{I}(R)) d\mu.\]

Using Cauchy Schwartz in this last expression, we get the bounds  
\[\limsup_{H\to \infty} \sum_{h=0}^{H-1}\frac{1}{H} \left \|\mathbb{E}(f_0\cdot f_0\circ S^{h}\vert \mathcal{I}(R)) \right \|_{L^2(\mu)} ~ \text{ and } ~ \limsup_{H\to \infty} \sum_{h=0}^{H-1}\frac{1}{H} \left \|\mathbb{E}(f_1\cdot f_1\circ T^{h}\vert \mathcal{I}(R)) \right \|_{L^2(\mu)}. \]
By Theorem \ref{Cauchy}-(3), these quantities converge to $|||f_0|||_{\mu,R,S}$ and $|||f_1|||_{\mu,R,T}$ and we are done.
\end{proof}

This lemma immediately implies the following:
\begin{lem}  Let $(X,\mathcal{X},\mu,S,T)$ be an ergodic triple magic system with commuting transformations $S$ and $T$. Let $f_0,f_1\in L^{\infty}(\mu)$. Then
	$$ \mathbb{E}(f_0 \otimes f_1 \vert \mathcal{I}(S\times T)) =\mathbb{E}\Bigl(\mathbb{E}(f_0\vert \Z_{S,R}) \otimes \mathbb{E}(f_1\vert \Z_{T,R})\big\vert \mathcal{I}(S\times T) \Bigr). $$ Consequently, $$(X^{2},\mathcal{I}(S\times T),\mu_{R})\cong\bigl(\Z_{S,R}\times\Z_{T,R},\mathcal{I}(S\times T),(\pi_{T}\times \pi_{S})_{*}(\mu_{R})\bigr).$$	
\end{lem}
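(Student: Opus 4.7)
The plan is to reduce the identity to a vanishing statement and then invoke Lemma~\ref{Lem:boundMeasurable} together with the triple magic hypothesis. First I would decompose $f_0 = \mathbb{E}(f_0 \vert \Z_{S,R}) + g_0$ and $f_1 = \mathbb{E}(f_1 \vert \Z_{T,R}) + g_1$, so that $\mathbb{E}(g_0 \vert \Z_{S,R}) = 0$ and $\mathbb{E}(g_1 \vert \Z_{T,R}) = 0$. Since the map $(f_0,f_1) \mapsto \mathbb{E}(f_0 \otimes f_1 \vert \I(S \times T))$ is bilinear, the identity is equivalent to the two vanishings
\[
\mathbb{E}(g_0 \otimes f_1 \vert \I(S\times T)) = 0 \quad \text{and} \quad \mathbb{E}(f_0 \otimes g_1 \vert \I(S\times T)) = 0
\]
in $L^2(\mu_R)$ for arbitrary bounded $f_0, f_1$.

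Next, Lemma~\ref{Lem:boundMeasurable} gives
\[
\left\| \mathbb{E}(g_0 \otimes f_1 \vert \I(S\times T)) \right\|_{L^2(\mu_R)} \leq \normm{g_0}_{\mu,R,S} = \normm{g_0}_{\mu,S,R},
\]
the last equality being the symmetry from Theorem~\ref{Cauchy}(2). Since $(X,\mathcal{X},\mu,S,R)$ is magic (by the triple magic hypothesis), the relation $\mathbb{E}(g_0 \vert \I(S)\vee\I(R)) = \mathbb{E}(g_0 \vert \Z_{S,R}) = 0$ forces $\normm{g_0}_{\mu,S,R} = 0$, which kills the first expression. The second is symmetric, using the magic property for $(T,R)$ and the identity $\normm{g_1}_{\mu,R,T} = \normm{g_1}_{\mu,T,R}$.

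For the isomorphism, since $\Z_{S,R}$ and $\Z_{T,R}$ are $G$-invariant factors of $X$, the projection $\pi_T \times \pi_S \colon (X^2, \mu_R, S\times T) \to \bigl(\Z_{S,R}\times\Z_{T,R}, (\pi_T\times\pi_S)_{\ast}\mu_R, S\times T\bigr)$ is a factor map, yielding $(\pi_T\times\pi_S)^{-1}\I(S\times T) \subseteq \I(S\times T)$ on $X^2$. For the reverse inclusion I would take $F \in L^2(\I(S\times T),\mu_R)$, approximate it in $L^2(\mu_R)$ by finite sums of tensors $f_0 \otimes f_1$, and project onto $\I(S\times T)$; by the identity just proved, each summand becomes the projection of $\mathbb{E}(f_0 \vert \Z_{S,R}) \otimes \mathbb{E}(f_1 \vert \Z_{T,R})$, which is $(\pi_T\times\pi_S)$-measurable, and the standard fact $\mathbb{E}(h \circ (\pi_T\times\pi_S) \vert \I(S\times T)_{X^2}) = \mathbb{E}(h \vert \I(S\times T))\circ(\pi_T\times\pi_S)$ for factor maps keeps it so, so $F$ is $(\pi_T\times\pi_S)$-measurable as an $L^2$-limit. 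The main obstacle I anticipate is precisely the matching of seminorms to the magic hypothesis: Lemma~\ref{Lem:boundMeasurable} produces bounds involving $\normm{\cdot}_{\mu,R,S}$ and $\normm{\cdot}_{\mu,R,T}$, whereas the magic assumption is naturally phrased via $\normm{\cdot}_{\mu,S,R}$ and $\normm{\cdot}_{\mu,T,R}$, so the symmetry of the Host seminorm is the crucial bridge.
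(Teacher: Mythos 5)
Your proof is correct and follows essentially the same route as the paper's (very terse) argument: reduce the identity to the two vanishing statements via bilinearity, invoke Lemma~\ref{Lem:boundMeasurable}, use the symmetry $\normm{\cdot}_{\mu,R,S}=\normm{\cdot}_{\mu,S,R}$ from Theorem~\ref{Cauchy}(2) as the bridge to the magic hypothesis, and then get the isomorphism by density of tensors and compatibility of conditional expectation with the factor map. You merely spell out routine details that the paper leaves implicit, including the isomorphism step, which the paper does not even comment on.
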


\begin{proof}
It suffices to show that $\mathbb{E}(f_0\otimes f_1 \mid \I(S\times T))=0$ whenever $\mathbb{E}(f_0\mid \Z_{S,R})=0$ or $\mathbb{E}(f_1 \mid \Z_{T,R})=0$. Lemma \ref{Lem:boundMeasurable} gives us exactly this result.
\end{proof}

The next lemma is one of the key ingredients of the proof:

\begin{lem}\label{ist}  Let $(X,\mathcal{X},\mu,S,T)$ be an ergodic triple magic system with commuting transformations $S$ and $T$. Then
	$$\bigl(\Z_{S,R}\times\Z_{T,R},\I(S\times T),(\pi_{T}\times \pi_{S})_{\ast}\mu_R\bigr)\cong(\Z_{S,T},\W,\nu_{S,T}).$$
\end{lem}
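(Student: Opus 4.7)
The plan is to unpack the push-forward measure using the disintegration of $\mu$ over $X_R$, show it is concentrated on a ``diagonal'' where it has product structure, and then identify the $(S\times T)$-invariants via ergodicity.

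First, apply Lemma~\ref{2p} to identify $\Z_{S,R}\cong X_S\times X_R$ and $\Z_{T,R}\cong X_T\times X_R$. Under these identifications, writing $p_W\colon X\to X_W$ for the canonical factor maps, we have $\pi_T=(p_S,p_R)$ and $\pi_S=(p_T,p_R)$, with $S$ acting on $\Z_{S,R}$ as $\id\times S$ (trivial on $X_S$) and $T$ on $\Z_{T,R}$ as $\id\times T$ (trivial on $X_T$). Moreover, Lemma~\ref{2p} yields $\nu_{S,R}=\nu_S\times\nu_R$ and $\nu_{T,R}=\nu_T\times\nu_R$, so $p_S$ and $p_T$ are each independent of $p_R$ under $\mu$.

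Next, disintegrate $\mu=\int_{X_R}\mu_r\,d\nu_R(r)$ over $X_R$, so $\mu_R=\int_{X_R}\mu_r\otimes\mu_r\,d\nu_R(r)$. Since $p_R$ is $\mu_r$-a.s. equal to $r$, the push-forward $\nu':=(\pi_T\times\pi_S)_{\ast}\mu_R$ concentrates on the diagonal $D:=\{(x_S,r^{(1)},x_T,r^{(2)}):r^{(1)}=r^{(2)}\}\subseteq(X_S\times X_R)\times(X_T\times X_R)$, naturally identified with $X_S\times X_R\times X_T$. The independence observations from above give $(p_S)_{\ast}\mu_r=\nu_S$ and $(p_T)_{\ast}\mu_r=\nu_T$ for $\nu_R$-a.e.\ $r$; substituting into the disintegrated expression shows that on $X_S\times X_R\times X_T$, $\nu'$ equals the product measure $\nu_S\times\nu_R\times\nu_T$.

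On $(X_S\times X_R)\times(X_T\times X_R)$, the transformation $S\times T$ acts by $(x_S,x_R^{(1)},x_T,x_R^{(2)})\mapsto(x_S,Sx_R^{(1)},x_T,Tx_R^{(2)})$. Since $R$ is the identity on $X_R$ and $T=SR$, we have $S=T$ as maps on $X_R$, so this action preserves $D$ and, under the identification, becomes $\id\times S\times\id$ on $X_S\times X_R\times X_T$. The ergodicity of $\langle S,T\rangle$ on $(X,\mu)$ descends to ergodicity of $S$ on $(X_R,\nu_R)$, and by Fubini the $(\id\times S\times\id)$-invariants in $(X_S\times X_R\times X_T,\nu_S\times\nu_R\times\nu_T)$ are precisely the measurable functions of $(x_S,x_T)$. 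The resulting measure on $X_S\times X_T$ is $\nu_S\times\nu_T$, which, by Lemma~\ref{2p} applied to $\Z_{S,T}$, is isomorphic to $(\Z_{S,T},\W,\nu_{S,T})$, as claimed.

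The main obstacle is the careful bookkeeping to show that $\nu'$ concentrates on the diagonal with the expected product structure, and then translating the $S\times T$-action into $\id\times S\times\id$ via the identity $S=T$ on $X_R$; once these are settled, ergodicity of $S$ on $X_R$ collapses the $\I(S\times T)$-sub-$\sigma$-algebra onto $X_S\times X_T$ cleanly.
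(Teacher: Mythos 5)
Your argument is correct, but it takes a genuinely different route from the paper's proof. The paper works at the level of conditional expectations: it reduces by density to test functions $f_0 = h_0 g_0$, $f_1 = h_1 g_1$ with $h_0$ $S$-invariant, $h_1$ $T$-invariant, $g_0, g_1$ $R$-invariant, factors out $h_0 \otimes h_1$ from $\mathbb{E}(f_0 \otimes f_1 \mid \I(S \times T))$, and then shows that the remaining term $\mathbb{E}(g_0 \otimes g_1 \mid \I(S \times T))$ is a.e.\ constant because it is invariant under $\id \times R$, $S \times S$, $T \times T$ and $\mu_R$ is ergodic under the group these generate --- a nontrivial input imported from Section~\ref{Subsec:Magic}. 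You instead disintegrate $\mu$ over $X_R$, write $\mu_R = \int_{X_R} \mu_r \otimes \mu_r \, d\nu_R(r)$, observe that the push-forward concentrates on a diagonal copy of $X_S \times X_R \times X_T$, and use the independence packaged in Lemma~\ref{2p} to identify the resulting measure explicitly with $\nu_S \times \nu_R \times \nu_T$; since $T = S$ on $X_R$, the $(S \times T)$-action becomes $\id \times S \times \id$, and the invariant $\sigma$-algebra is read off by Fubini plus ergodicity of $S$ on $X_R$. Both endpoints are Lemma~\ref{2p}, but your route replaces the ergodicity of $\mu_R$ under the augmented group $\langle \id \times R, \, g \times g \rangle$ by the elementary ergodicity of the factor $X_R$, and in exchange makes the product structure of the conditional measure completely explicit --- which is arguably more transparent, though somewhat longer to set up. One small polish worth adding: when you pass from $\int_{X_R}\bigl((p_S)_\ast \mu_r\bigr) \otimes \delta_r \otimes \bigl((p_T)_\ast \mu_r\bigr)\,d\nu_R(r)$ to the product $\nu_S \times \nu_R \times \nu_T$, you are implicitly using that the slice over each $r$ is itself a product (which is automatic here since the two fiber coordinates come from independent copies of $\mu_r$); it is worth spelling that out since that is where the diagonal structure, rather than mere equality of marginals, is actually used.
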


\begin{proof}
	We first show that $\mathbb{E}(f_0\otimes f_1\mid \I(S\times T))$ is measurable with respect to $\I(S)\times \I(T)$ when $f_0$ is measurable with respect to $\Z_{S,R}$ and $f_1$ is measurable with respect to $\Z_{T,R}$. By a density argument, it suffices to prove it for the case when $f_0=h_0g_0$,  $f_1=h_1g_1$, where $h_0$ is $S$-invariant, $h_1$ is $T$-invariant and $g_0,g_1$ are $R$-invariant.
By the Birkhoff Ergodic Theorem, we have that 
\[\mathbb{E}(f_0\otimes f_1\mid \I(S \times T))=h_0 \otimes h_1\cdot \mathbb{E}(g_0\otimes g_1\mid \I(S\times T)).\] 

Since $g_0$ and $g_1$ are $R$-invariants,  the function $\mathbb{E}(g_0\otimes g_1\mid \I(S\times T))$ is invariant under $\id \times R$, $S\times S$ and $T\times T$. Since the measure $\mu_R$ is ergodic under these transformations (see Section \ref{Subsec:Magic}), $\mathbb{E}(g_0\otimes g_1\mid \I(S\times T))=\int g_0\otimes g_1 d\mu_R=\int g_0g_1d\mu$ is a constant. Thus  \[\mathbb{E}(f_0\otimes f_1\mid \I(S \times T))=\left (\int g_0g_1d\mu \right)h_0\otimes h_1, \]
which is clearly $\I(S)\times \I(T)$ measurable. We remark the measure $\mu_R$ on $\I(S)\times \I(T)$ coincides with the product measure $\mu\otimes \mu$. By Lemma \ref{2p}, this system is isomorphic to $(\Z_{S,T},\W,\nu_{S,T})$ and we are done. 
\end{proof}

In conclusion,  we have

\begin{cor}\label{equiv} 
 Let $(X,\mathcal{X},\mu,S,T)$ be an ergodic triple magic system with commuting transformations $S$ and $T$.  The following probability spaces are isomorphic:
  \begin{itemize}
  \item $(X^{2},\mathcal{I}(S\times T),\mu_{R})$;
  \item $\bigl(\Z_{S,R}\times\Z_{T,R},\mathcal{I}(S\times T),(\pi_{T}\times \pi_{S})_{*}(\mu_{R})\bigr)$;
  \item $(\Z_{S,T},\W,\nu_{S,T})$.
  \end{itemize}
\end{cor}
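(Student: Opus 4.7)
The plan is simply to observe that Corollary \ref{equiv} is the concatenation of the two isomorphisms proved immediately before it, so the proof reduces to chaining them together.

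First I would invoke the lemma immediately preceding Lemma \ref{ist}, which states that
$$(X^{2},\mathcal{I}(S\times T),\mu_{R})\cong\bigl(\Z_{S,R}\times\Z_{T,R},\mathcal{I}(S\times T),(\pi_{T}\times \pi_{S})_{\ast}\mu_{R}\bigr).$$
The isomorphism is realized by the factor map $\pi_{T}\times \pi_{S}$: the identity
$$\mathbb{E}(f_0\otimes f_1\,\vert\,\I(S\times T))=\mathbb{E}\bigl(\mathbb{E}(f_0\vert\Z_{S,R})\otimes \mathbb{E}(f_1\vert\Z_{T,R})\,\big\vert\,\I(S\times T)\bigr)$$
shows that every $\I(S\times T)$-measurable $L^{2}$-function on $(X^{2},\mu_{R})$ factors through $\pi_T\times \pi_S$, so this map induces a measure-preserving bijection at the level of $\sigma$-algebras modulo null sets.

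Next I would apply Lemma \ref{ist}, which provides the second link
$$\bigl(\Z_{S,R}\times\Z_{T,R},\I(S\times T),(\pi_{T}\times \pi_{S})_{\ast}\mu_R\bigr)\cong(\Z_{S,T},\W,\nu_{S,T}).$$
Composing the two isomorphisms yields all three equivalences simultaneously, and transitivity of isomorphism finishes the proof.

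There is no real obstacle here: the substance of the corollary is already contained in the two lemmas, which in turn depend on the triple magic hypothesis through Lemma \ref{Lem:boundMeasurable} and on the ergodicity of $\mu_R$ under the action generated by $\id\times R$, $S\times S$, $T\times T$ recalled in Section \ref{Subsec:Magic}. The only content in the corollary is packaging these two isomorphisms into a single statement for later reference.
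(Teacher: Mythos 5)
Your proposal is correct and matches the paper's reasoning: the paper presents this corollary as an immediate consequence ("In conclusion, we have") of the two preceding lemmas, and your chaining of the isomorphisms from the lemma preceding Lemma \ref{ist} and Lemma \ref{ist} itself via transitivity is exactly the intended argument.
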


Let $(X,S,T)$ be a topological dynamical system with commuting transformations $S$ and $T$. 
For $W=S,T$ or $R$, denote $$\Q_{W}(X)=\overline{\{(x,W^{i}x)\colon x\in X, i\in\mathbb{Z}\}}\subseteq X^2$$ and let 
$G_{W}$ be the subgroup of $G\times G$ spanned by $g\times g$, $g\in G$ and $\id\times W$.
The following result from \cite{DS} replaces section 4.1.2 in \cite{HSY}:

\begin{thm}[\cite{DS} Theorem 4.1]  \label{TH:Q_SU.E.}
Let $(X,\mathcal{X},\mu,S,T)$ be a magic system (for $S$ and $T$). If the projection $X\to \Z_{S,T}$ is continuous (we assume the spaces are topological), then $\Q_S(X)$ and $\Q_{T}(X)$ are uniquely ergodic with measures $\mu_S$ and $\mu_T$ respectively.  
\end{thm}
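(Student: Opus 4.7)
The plan is to establish that $\mu_S$ is the only $G_S$-invariant probability measure on $\Q_S(X)$; the argument for $\Q_T(X)$ is identical by symmetry. I would begin by noting that $\mu_S$ is $G_S$-invariant, is concentrated on $\Q_S(X)$ (as a limit of measures supported on diagonal orbits under $\id\times S$), and is $G_S$-ergodic (as recalled in Section \ref{Subsec:Magic}). By ergodic decomposition it then suffices to show that every $G_S$-ergodic probability measure $\lambda$ on $\Q_S(X)$ coincides with $\mu_S$.

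Fix such a $\lambda$. The projection of $\lambda$ to either coordinate is a $G$-invariant measure on $X$, and since in the topological model $X$ is strictly ergodic with measure $\mu$, both marginals of $\lambda$ equal $\mu$. Disintegrate $\lambda=\int_X \delta_x\otimes \lambda_x\,d\mu(x)$. The $(\id\times S)$-invariance of $\lambda$ forces $\lambda_x$ to be $S$-invariant for $\mu$-a.e.\ $x$. Since $G_S$ contains $\id\times S$ and $S\times S$, it also contains $S\times \id$, and the resulting invariance yields $\lambda_{Sx}=\lambda_x$; hence $\lambda_x$ depends only on $\pi_S(x)$, so we may write $\lambda_x=\tilde\lambda_{\pi_S(x)}$. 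The continuity of $\pi_S:X\to X_S$ (which follows from the assumed continuity of $\pi:X\to \Z_{S,T}$) implies that the closed set $\{(x,y)\in X^2:\pi_S(x)=\pi_S(y)\}$ contains every orbit $\{(x,S^ix):i\in\mathbb{Z}\}$, hence all of $\Q_S(X)$; thus each $\tilde\lambda_y$ is supported on the fiber $\pi_S^{-1}(y)$. Finally, the $(T\times T)$-invariance of $\lambda$ translates into the equivariance $T_{\ast}\tilde\lambda_y=\tilde\lambda_{Ty}$. The task is therefore reduced to showing $\tilde\lambda_y=\mu_y$ for $\nu_S$-almost every $y$, where $\mu_y$ denotes the disintegration of $\mu$ over $\I(S)$.

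To close this gap I would invoke the magic property via an $L^2$ argument. Define $E_\lambda:L^\infty(\mu)\to L^\infty(\nu_S)$ by $E_\lambda f(y)=\int f\,d\tilde\lambda_y$; by construction it is $T$-equivariant and coincides with the conditional expectation $E_\mu f(y):=\mathbb{E}(f\mid \I(S))(y)$ on $\I(S)$-measurable functions. The key claim is that $\mathbb{E}(f\mid \Z_{S,T})=0$ implies $E_\lambda f=0$. By the magic hypothesis this is equivalent to deducing $E_\lambda f=0$ from $\normm{f}_{\mu,S,T}=0$, which I would establish via a van der Corput argument of the type used in Lemma \ref{Lem:boundMeasurable}, bounding $\|E_\lambda f\|_{L^2(\nu_S)}$ by a multiple of $\normm{f}_{\mu,S,T}$ using the $T$-equivariance of $\tilde\lambda$ together with the $(\id\times S)$-invariance of $\lambda$. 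Combined with the agreement of $E_\lambda$ and $E_\mu$ on $\I(S)$-measurable functions, this would force $E_\lambda=E_\mu$, whence $\tilde\lambda_y=\mu_y$ for $\nu_S$-a.e.\ $y$ and $\lambda=\mu_S$. The hard part will be exactly this last estimate: because $G_S$ acts asymmetrically on $X^2$ (full in the $S$-direction but only diagonally in the $T$-direction), the van der Corput telescoping must be performed against the unknown joining $\lambda$ rather than the independent joining $\mu_S$, and it is precisely here that the magic assumption is indispensable for converting the seminorm vanishing into the required orthogonality.
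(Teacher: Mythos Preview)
The paper does not give a proof of this statement; it is quoted from \cite{DS}, so there is no in-paper argument to compare against. I will therefore assess your outline on its own terms and point out where it diverges from what is actually needed.

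Your overall architecture (fix an ergodic $G_S$-invariant $\lambda$, disintegrate over the first coordinate, reduce to showing $E_\lambda=E_\mu$) is the right one, and steps (1)--(4) are fine. There are, however, two issues.

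\emph{A small gap.} You only record that $E_\lambda$ and $E_\mu$ agree on $\I(S)$-measurable functions, but your proposed decomposition is $f=\mathbb{E}(f\mid\Z_{S,T})+(f-\mathbb{E}(f\mid\Z_{S,T}))$, so you need agreement on all of $\I(S)\vee\I(T)$. This is not hard: for $g\in\I(S)$ and $h\in\I(T)$ one has $E_\lambda(gh)(y)=g(y)\int h\,d\tilde\lambda_y$, and the $T$-equivariance $T_\ast\tilde\lambda_y=\tilde\lambda_{Ty}$ forces $y\mapsto\int h\,d\tilde\lambda_y$ to be $T$-invariant on $X_S$, hence constant equal to $\int h\,d\mu$; this matches $E_\mu(gh)$ because $\mathbb{E}(h\mid\I(S))$ is $G$-invariant and thus constant. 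So this gap is fixable.

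\emph{The main concern.} Your ``hard part'' --- bounding $\|E_\lambda f\|_{L^2(\nu_S)}$ by $\normm{f}_{\mu,S,T}$ via a van der Corput argument run against the \emph{unknown} joining $\lambda$ --- is not clearly achievable, and you do not indicate how the telescoping would actually close. In fact this detour through seminorms is unnecessary. Since $G_S$ contains both $\id\times S$ and $S\times\id$, the von Neumann theorem applied inside $L^2(\lambda)$ gives directly
\[
\int f_0\otimes f_1\,d\lambda=\int f_0\otimes \mathbb{E}_\mu(f_1\mid\I(S))\,d\lambda=\int \mathbb{E}_\mu(f_0\mid\I(S))\otimes \mathbb{E}_\mu(f_1\mid\I(S))\,d\lambda,
\]
because $f_0\otimes 1$ is $(\id\times S)$-invariant and the second marginal of $\lambda$ is $\mu$. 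Thus $\lambda$ is completely determined by its pushforward under $\pi\times\pi$ to $\Q_S(\Z_{S,T})$ (equivalently, under $\pi_S\times\pi_S$ to $X_S\times X_S$). The continuity hypothesis is used exactly here: it forces this pushforward to live on a system which, in the topological model at hand, is a product of strictly ergodic $\mathbb{Z}$-systems with independent actions and is therefore itself uniquely ergodic. No seminorm estimate against $\lambda$ is needed, and the magic hypothesis enters through the structure of the model for $\Z_{S,T}$ rather than via a quantitative bound of the type you propose.

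In short: your plan is in the right spirit but routes the argument through a difficult and unproved estimate, whereas the direct averaging observation above collapses the problem to unique ergodicity of the factor system $\Q_S(\Z_{S,T})$.
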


We are now ready to prove Theorem \ref{thm:uniquelyergodicmodel}:

\begin{proof}[Proof of Theorem \ref{thm:uniquelyergodicmodel}]
 By passing to an extension, we may assume that $(X,\mathcal{X},\mu,S,T)$ is free, ergodic and triple magic by Proposition \ref{exis}. We may assume that $({X},S,T)$ is its model given by Lemma \ref{lem:magic3} and  and then by Lemma \ref{nz} $\Bigl(\tilde{\pi}\bigl(N_{S,T}({X})\bigr),\tilde{\pi}H_{S,T}\Bigr)$ is strictly ergodic. All the factors considered are topological so for convenience we do not write the symbol~  $\widehat{\cdot}$.

 Suppose that $\l$ is an $H_{S,T}$-invariant measure on $N_{S,T}(X)$. Let $$p_{1}\colon (N_{S,T}(X),H_{S,T})\to (X,G)$$ be the projection onto the first coordinate and $$p_{2}\colon (N_{S,T}(X),H_{S,T})\to (\Q_{R}(X),G_{R})$$ be the projection onto the last two coordinates. 
  By the unique ergodicity of $(X,G)$ and Theorem \ref{TH:Q_SU.E.}, $(p_{1})_{*}(\l)=\mu$ and $(p_{2})_{*}(\l)=\mu_{R}$. So we may assume that
$$\l=\int_{\Q_{R}(X)}\l_{\bold{x}}\times \d_{\bold{x}}d\mu_{R}(\bold{x})$$
is the disintegration of $\l$ over $\mu_{R}$. We remark that the measure $\l_{\bold{x}}$ has a support included in $\{c: (c,\bold{x})\in N_{S,T}(X)\} \subseteq X$. Since $\l$ is $(\id\times S\times T)$-invariant, we have that
$$\l=(\id\times S\times T)_{*}\l=\int_{\Q_{R}(X)}\l_{\bold{x}}\times \d_{(S\times T)\bold{x}}d\mu_{R}(\bold{x})=\int_{\Q_{R}(X)}\l_{(S\times T)^{-1}\bold{x}}\times \d_{\bold{x}}d\mu_{R}(\bold{x}).$$
So 
$$\l_{(S\times T)\bold{x}}=\l_{\bold{x}}$$
for $\mu_{R}-a.e.$ $\bold{x}\in\Q_{R}(X)$.
Define
$$F\colon (\Q_{R}(X),\mu_{R},S\times T)\to M(X)$$
by $F(\bold{x})=\l_{\bold{x}}$. Then $F$ is $\I(S\times T)$-measurable. By Corollary \ref{equiv}, we can write $\l_{\bold{x}}=\l_{\phi(\bold{x})}$ for $\mu_{R}$-a.e. $\bold{x}\in\Q_{R}(X)$, where $\phi\colon (X^{2},\mathcal{X}^{2},\mu_{R})\to (\Z_{S,T},\W,\nu_{S,T})$ is the factor map.

Let
$$\mu_{R}=\int_{\Z_{S,T}}m_{x}d\nu_{S,T}(x)$$
be the disintegration of $\mu_{R}$ over $\nu_{S,T}$. Then
  \begin{equation}\nonumber
    \begin{split}
      &\l=\int_{\Q_{R}(X)}\l_{\bold{x}}\times \d_{\bold{x}}d\mu_{R}(\bold{x})=\int_{\Q_{R}(X)}\l_{\phi(\bold{x})}\times \d_{\bold{x}}d\mu_{R}(\bold{x})
      \\&=\int_{\Z_{S,T}}\int_{\Q_{R}(X)}\l_{s}\times \d_{\bold{x}}dm_{s}(\bold{x})d\nu_{S,T}(s)=\int_{\Z_{S,T}}\l_{s}\times m_{s}d\nu_{S,T}(s).
    \end{split}
  \end{equation}
  So $$\tilde{\pi}_{*}(\l)=\int_{\Z_{S,T}}(\pi_{R})_{*}\l_{s}\times (\pi_{T}\times\pi_{S})_{*}m_{s}d\nu_{S,T}(s).$$
  On the other hand, by \eqref{Eq:xi}, we have 
  $$\tilde{\pi}_{*}(\l)=\xi=\int_{\Z_{S,T}}\d_{s}\times \eta_{s}d\nu_{S,T}(s).$$
The measure $(\pi_{R})_{*}\l_{s}$ has a support included in $\{\pi_{R}(c):(c,\bold{x})\in N_{S,T}(X), s=\phi(\bold{x}) \}$ and since $\phi({\bold{x}})$ determines $\pi_{R}(c)$ (see Remark \ref{Rem:CoordinatesDetermine}), we have that $(\pi_{R})_{*}\l_{s}=\d_{s}$ for $\nu_{S,T}-a.e.$ $s\in\Z_{S,T}$. Since $(p_{1})_{*}(\l)=\mu$, we have that
  $$\mu=\int_{\Z_{S,T}}\l_{s}d\nu_{S,T}(s).$$
  Let $$\mu=\int_{\Z_{S,T}}\theta_{s}d\nu_{S,T}(s)$$ be the disintegration of $\mu$ over $\nu_{S,T}$. Since $\tilde{\pi}_{*}\l_{s}=\tilde{\pi}_{*}\theta_{s}=\d_{s}$ for $\nu_{S,T}-a.e.$  $s\in \Z_{S,T}$, by the uniqueness of disintegration, we have that $\l_{s}=\theta_{s}$ for $\nu_{S,T}-a.e.$  $s\in \Z_{S,T}$. Therefore
  $$\l=\int_{\Z_{S,T}}\l_{s}\times m_{s}d\nu_{S,T}(s)=\int_{\Z_{S,T}}\theta_{s}\times m_{s}d\nu_{S,T}(s),$$
  which is a uniquely determined measure since $\Z_{S,T}$ is uniquely ergodic.
\end{proof}

\section{Pointwise results} \label{Sec:PointwiseResults}
As in the previous section, we assume that $(X,\mathcal{X},\mu,S)$ and $(X,\mathcal{X},\mu,T)$ are free.   
In this section, whenever $(X,\mathcal{X},\mu,S,T)$ is a triple magic system, we assume it is its strictly ergodic model given by Theorem \ref{thm:uniquelyergodicmodel}, and use $\lambda_{S,T}$ to denote the unique ergodic measure of $(N_{S,T}({X}),H_{S,T})$.

\subsection{Proof of Theorem \ref{THM:1}}
We are now ready to prove Theorem \ref{THM:1}. In fact, if the system is triple magic, we can obtain an explicit limit:

\begin{thm} \label{Prop:PointConverBigAverage}
	 
Let 
$(X,\mathcal{X},\mu,S,T)$ be an ergodic system with commuting transformations $S$ and $T$.
 Then for all $f_0,f_1,f_3\in L^{\infty}(\mu)$, the average

\[\frac{1}{N^3} \sum_{i,j,k=0}^{N-1} f_0(S^jT^kx)f_1(S^{i+j}T^k x)f_2(S^j T^{i+k}x) \]
converges for $\mu$-a.e. $x\in X$ as $N\to \infty$. Moreover, if the system is free, ergodic and triple magic, then the limit is $\int f_0 \otimes f_1\otimes f_2 d\lambda_{S,T}$.
\end{thm}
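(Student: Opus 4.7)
I would rewrite the triple sum as a $\mathbb{Z}^3$ Ces\`aro average along a single orbit of $H_{S,T}$ inside $N_{S,T}(X)$. Setting $\tau_1=\id\times S\times T$, $\tau_2=S\times S\times S$, $\tau_3=T\times T\times T$ (three commuting generators of $H_{S,T}$), a direct computation gives
\[ \tau_1^i\tau_2^j\tau_3^k(x,x,x)=\bigl(S^jT^kx,\, S^{i+j}T^kx,\, S^jT^{i+k}x\bigr)\in N_{S,T}(X), \]
so with $F=f_0\otimes f_1\otimes f_2$ the average in question is precisely
\[ \frac{1}{N^3}\sum_{i,j,k=0}^{N-1} F\bigl(\tau_1^i\tau_2^j\tau_3^k(x,x,x)\bigr), \]
a cubic Birkhoff average for the commuting $\mathbb{Z}^3$-action $(\tau_1,\tau_2,\tau_3)$ at the diagonal point $(x,x,x)$. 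The plan is to cash in the strict ergodicity of $(N_{S,T}(X),H_{S,T})$ coming from Theorem~\ref{thm:uniquelyergodicmodel}.

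\textbf{Step 1: triple magic case, continuous functions.} First I would settle the ``moreover'' clause, assuming $(X,\mu,S,T)$ is free, ergodic and triple magic. By Lemmas~\ref{lem:magic3} and~\ref{nz}, $X$ admits a topological model (still denoted $X$) on which $(N_{S,T}(X),H_{S,T})$ is strictly ergodic with unique invariant measure $\lambda_{S,T}$. The uniform ergodic theorem for uniquely ergodic $\mathbb{Z}^3$-systems then gives, for every $F\in C(N_{S,T}(X))$,
\[ \frac{1}{N^3}\sum_{i,j,k=0}^{N-1} F(\tau_1^i\tau_2^j\tau_3^k\mathbf{y}) \longrightarrow \int F\, d\lambda_{S,T}\qquad\text{uniformly in }\mathbf{y}\in N_{S,T}(X). \]
Taking $F=\tilde f_0\otimes \tilde f_1\otimes \tilde f_2$ with $\tilde f_i\in C(X)$ and $\mathbf{y}=(x,x,x)$ yields convergence, uniform in $x$, to $\int F\, d\lambda_{S,T}$.

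\textbf{Step 2: extension to $L^\infty$.} Next I would approximate each $f_i\in L^\infty(\mu)$ in $L^1(\mu)$ by continuous $\tilde f_i$ with $\|\tilde f_i\|_\infty\le\|f_i\|_\infty$, and set $g_i=f_i-\tilde f_i$. Expanding
\[ f_0\otimes f_1\otimes f_2-\tilde f_0\otimes \tilde f_1\otimes \tilde f_2 \]
into seven tensor products, each containing at least one small factor, I would bound each piece using the Wiener maximal inequality for the $\mathbb{Z}^2$-action $(S,T)$. For example a term with $g_0$ in the first slot is dominated by
\[ \|h_1\|_\infty\|h_2\|_\infty\cdot \sup_N \frac{1}{N^2}\sum_{j,k=0}^{N-1}|g_0(S^jT^kx)|, \]
controlled in $L^p(\mu)$ ($p>1$) by $C\|g_0\|_{L^p}$. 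Terms containing $g_1$ or $g_2$ yield to the same argument after a change of summation variable ($m=i+j$ or $m=i+k$), which presents the maximal function over a rectangular F\o lner window in the $(S,T)$-action. A standard Banach-principle argument then promotes $\mu$-a.e.\ convergence from the dense subspace $C(X)^{\otimes 3}$ to all of $L^\infty(\mu)^{\otimes 3}$, with limit still $\int f_0\otimes f_1\otimes f_2\, d\lambda_{S,T}$.

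\textbf{Step 3: general ergodic case.} For an arbitrary ergodic $(X,\mu,S,T)$, Proposition~\ref{exis} produces a free, ergodic, triple magic extension $\pi\colon X'\to X$. Steps~1--2 applied on $X'$ to the lifted functions $f_i\circ \pi \in L^\infty(\mu')$ give $\mu'$-a.e.\ convergence on $X'$; since $\pi$ intertwines the actions, convergence at $x'$ for $(f_i\circ\pi)$ is equivalent to convergence at $\pi(x')$ for $(f_i)$, and $\pi_*\mu'=\mu$ transfers the full-measure set down to $X$.

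\textbf{Main obstacle.} All the conceptual work is absorbed into Theorem~\ref{thm:uniquelyergodicmodel}; given it, the one creative step is the algebraic identity in the \emph{Approach} that reveals the triple sum as a $\mathbb{Z}^3$ Birkhoff average along an $H_{S,T}$-orbit. The only remaining technicality is the $L^\infty$ extension, slightly complicated by the asymmetric exponents $i+j$ and $i+k$, which force separate changes of variable before each error term is covered by the $\mathbb{Z}^2$ Wiener maximal inequality; everything else is routine.
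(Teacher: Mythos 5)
Your proposal is correct and follows essentially the same route as the paper: rewrite the average as a Ces\`aro average along the $H_{S,T}$-orbit of the diagonal in $N_{S,T}(X)$, invoke strict ergodicity from Theorem~\ref{thm:uniquelyergodicmodel} for continuous functions, and approximate in $L^1$ to handle general $L^\infty$ functions; the lift from the magic extension down to $X$ is likewise as in the paper. The only (minor) difference is in the approximation step: where you invoke a seven-term binomial expansion plus the Wiener maximal inequality and a Banach-principle argument, the paper uses a three-term telescoping bound and simply applies the pointwise ergodic theorem directly to each error term (each of which converges a.e.\ to an $L^1$-norm), avoiding any maximal-inequality machinery.
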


\begin{proof}
By Theorem \ref{thm:uniquelyergodicmodel}, $X$ has an extension $X'$ which has a topological model $\wh{X}'$ such that $(N_{S',T'}(\wh{X}'),H_{S',T'})$ is strictly ergodic.
It suffices to work on $\wh{X}'$ instead of $X$. So for convenience we assume $X=\wh{X}'$ in the proof.

 Fix $\epsilon>0$.
Let $\widehat{f}_0$, $\widehat{f}_1$ and $\widehat{f}_2$ be continuous functions on $X$ such that $\| f_i-\widehat{f}_i \|_{L^1(\mu)} \leq \epsilon$. We assume  without loss of generality that the $L^{\infty}(\mu)$ norms of $f_{i},\wh{f}_{i},i=0,1,2$ are bounded by 1. For any functions $h_0,h_1,h_2$, write $$\mathbb{E}_N(h_0,h_1,h_2)(x)=\frac{1}{N^3} \sum_{i,j,k=0}^{N-1} h_0(S^jT^kx)h_1(S^{i+j}T^k x)h_2(S^j T^{i+k}x)$$ and $$I(h_0,h_1,h_2)=\int h_0\otimes h_1 \otimes h_2 d\lambda_{S,T}.$$

By telescoping, we have that 
\begin{align*}
& \quad\bigl\vert\mathbb{E}_N(f_0,f_1,f_2)(x)-\mathbb{E}_N(\widehat{f}_0,\widehat{f}_1,\widehat{f}_2)(x)\bigr\vert \\
& \leq \frac{1}{N^3}\sum_{i,j,k=0}^{N-1}\bigl\vert f_0(S^jT^kx)-\widehat{f}_0(S^jT^kx)\bigr\vert + \frac{1}{N^3}\sum_{i,j,k=0}^{N-1}\bigl\vert f_1(S^{j+i}T^jx)-\widehat{f}_1(S^{j+i}T^kx)\bigr\vert \\ 
&\quad + \frac{1}{N^3}\sum_{i,j,k=0}^{N-1}\bigl\vert f_2(S^jT^{i+k}x)-\widehat{f}_2(S^jT^{i+k}x)\bigr\vert
\end{align*}

By the Pointwise Ergodic Theorem (for abelian actions), the three terms on the right hand side converge almost everywhere to $\| f_0-\widehat{f}_0 \|_{L^1(\mu)}$, $\| f_1-\widehat{f}_1 \|_{L^1(\mu)}$ and $\| f_2-\widehat{f}_2 \|_{L^1(\mu)}$, respectively.

Again by telescoping, we deduce that 
\[ \bigl\vert I(f_0,f_1,f_2)-I(\widehat{f}_0,\widehat{f}_1,\widehat{f}_2)\bigr\vert\leq \| f_0-\widehat{f}_0 \|_{L^1(\mu)}+\| f_1-\widehat{f}_1 \|_{L^1(\mu)} + \| f_2-\widehat{f}_2 \|_{L^1(\mu)}. \]
On the other hand, since $(N_{S,T}(X),H_{S,T})$ is uniquely ergodic, we have that 
 \[\lim_{N\to\infty}\mathbb{E}_N(\widehat{f}_0,\widehat{f}_1,\widehat{f}_2)(x) =I(\widehat{f}_0,\widehat{f}_1,\widehat{f}_2) \text{ for every } x\in X.\] Thus for $\mu$-a.e. $x\in X$, we have \[ \limsup_{N\to\infty}  \bigl\vert\mathbb{E}_N(f_0,f_1,f_2)-I(f_0,f_1,f_2)\bigr\vert \leq 6 \epsilon.\] The result follows since $\epsilon$ is arbitrary.
\end{proof}

\subsection{Measurable distal systems} \label{Sec:MultipleAverage} 
In this section we study the properties of distal systems. 
We start with some definitions (see \cite{Glas} Chapter 10 for further details):  
\begin{defn}
Let $\pi\colon (X,\mathcal{X},\mu,G)\to (Y,\mathcal{Y},\nu,G)$ be a factor map between two ergodic systems. We say $\pi$ is an \emph{isometric} extension if there exist a compact group $H$, a closed subgroup $\Gamma$ of $H$, and a cocycle $\rho\colon G \times Y\to H$ such that $(X,\mathcal{X},\mu,G)\cong (Y\times H/\Gamma,\mathcal{Y}\times\mathcal{H}, \nu\times m, G)$, where $m$ is the Haar measure on $H/\Gamma$, $\mathcal{H}$ is the Borel $\sigma$-algebra on $H/\Gamma$, and that for all $g \in G$, we have
$$g(y,a\Gamma)=(g y, \rho(g,y)a\Gamma).$$

In this case, we say that $\pi\colon (X,\mathcal{X},\mu,G)\to (Y,\mathcal{Y},\nu,G)$ is an {\it isometric extension} with fiber $H/\Gamma$ and cocycle $\rho$. We denote  $X$ by $Y\times_{\rho} H/\Gamma$.
\end{defn}

\begin{rem} \label{Rem:TopologyH} Let ${\rm Aut}(X,\mu)$ be the group of measurable transformations of $X$ which preserve the measure $\mu$, endowed with the weak topology of convergence in measure, meaning that $h_n\to h\in {\rm Aut}(X,\mu) $ if and only if $\| f\circ h-f\circ h_n \|_{L^2(\mu)}\to 0$ for all $f\in L^2(\mu)$. Under this topology, ${\rm Aut}(X,\mu)$ is a Polish group (see \cite{BK}, Chapter 1). An important fact of isometric extensions is  that the group $H$ can be regarded as a compact subgroup of ${\rm Aut}(X,\mu)$, considering its inclusion on ${\rm Aut}(X,\mu)$  and this is  independent of the choice of models for $X$. This follows basically from the fact that measurable morphisms between Polish groups are automatically continuous (see \cite{BK}, Chapter 1, Theorem 1.2.6). 
\end{rem}

\begin{rem}
For every isometric extension $\pi\colon X\to Y$ with fiber $H/\Gamma$ and  measurable function  $f$ on $(X,\mu)$, the conditional expectation of $f$ (as a function on $(X,\mu)$) with respect to $Y$ is 
\[\mathbb{E}(f\vert \mathcal{Y})(x)=\int_{H} f(hx)dm(h).\]

Equivalently (as a function on $(Y,\mathcal{Y},\nu)$),

\[\mathbb{E}(f\vert Y)(y)=\int_{H} f(hx)dm(h) \quad \text{ for all }\pi(x)=y. \]   
  
\end{rem}

\begin{defn}
Let $\pi\colon (X,\mathcal{X},\mu,G)\to (Y,\mathcal{Y},\nu,G)$ be a factor map between two ergodic systems. We say $\pi$ is a \emph{distal extension} if there exist a countable ordinal $\eta$ and a directed family of factors $(X_{\theta},\mu_{\theta}, G), \theta\leq\eta$ such that
\begin{itemize}
\item  $X_{0}=Y$, $X_{\eta}=X$;
\item  For $\theta<\eta$, the extension $\pi_{\theta}\colon X_{\theta+1}\to X_{\theta}$ is isometric and is not an isomorphism;
\item  For a limit ordinal $\l\leq \eta$, $X_{\l}=\lim\limits_{\leftarrow \theta<\l} X_{\theta}$.
\end{itemize}
We say $X$ is a \emph{distal system} if $X$ is a distal extension of the trivial system.
\end{defn}

An alternative definition of a measurable distal system is formulated using separating sieves:
\begin{defn}
Let $\pi\colon (X,\mathcal{X},\mu,G)\to (Y,\mathcal{Y},\nu,G)$ be a factor map between two ergodic systems. A {\it separating sieve} for $X$ over $Y$ is a sequence of measurable subset $\{A_i\}_{i\in \mathbb{N}}$ with $A_{i+1}\subseteq A_{i}$, $\mu(A_i)>0$ and $\mu(A_i)\to 0$ such that there exists a measurable subset $X'\subseteq X$, $\mu(X')=1$ with the following property: for $x,x'\in X'$, if $\pi(x)=\pi(x')$ and for every $i\in \mathbb{N}$ there exists $g\in G$ such that $gx,gx'\in A_i$, then $x=x'$.
\end{defn}

\begin{prop}(\cite{Glas}, Chapter 10)
	Let $(X,\mathcal{X},\mu,G)$ be an extension of $(Y,\mathcal{Y},\nu,G)$. Then $X$ is a distal extension of $Y$ if and only if there exists a separating sieve for $X$ over $Y$.
\end{prop}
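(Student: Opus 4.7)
The plan is to prove the two implications separately, combining transfinite induction for the forward direction with a structure-theoretic argument for the converse.

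For the forward direction, suppose $X$ is a distal extension of $Y$ via a tower $(X_\theta)_{\theta \leq \eta}$. I would proceed by transfinite induction along this tower to construct separating sieves $\{A_i^\theta\}$ for $X_\theta$ over $Y$ simultaneously. At an isometric step $X_{\theta+1} = X_\theta \times_\rho H/\Gamma$, choose a decreasing sequence $U_i$ of neighborhoods of the identity coset $\Gamma$ in $H/\Gamma$ with $\bigcap_i U_i = \{\Gamma\}$, and set $A_i^{\theta+1} = \pi_\theta^{-1}(A_i^\theta) \cap (X_\theta \times U_i)$ (up to a positive-measure base). The separating property lifts because $H$ acts by isometries on $H/\Gamma$: two points lying in a common fiber that can both be simultaneously pushed into arbitrarily small neighborhoods of the identity coset by some sequence $g_n \in G$ must coincide in the fiber direction, while the base is controlled by the inductive hypothesis. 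At a limit ordinal $\lambda$, one takes a cofinal sequence of factors below $\lambda$ and diagonally combines their sieves, choosing a sparse enough subsequence so that the intersection retains positive measure.

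For the converse direction, the strategy is to show that any extension admitting a separating sieve must already be distal. I would invoke the Furstenberg--Zimmer structure theorem to decompose $\pi\colon X \to Y$ as $X \to X_d \to Y$, where $X_d$ is the maximal intermediate distal extension of $Y$. If $X \to X_d$ were a nontrivial weakly mixing extension, then the separating sieve property would fail: in the relatively independent self-joining on $X \times_{X_d} X$, the $G$-action is ergodic off the diagonal relative to $X_d$, so almost every pair of distinct points in a common $X_d$-fiber has a $G$-orbit that visits every positive-measure rectangle. Applying this to the shrinking sets $A_i$ produces pairs $x \neq x'$ with $\pi(x) = \pi(x')$ that can be simultaneously pushed into each $A_i$, contradicting the sieve hypothesis. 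Hence $X = X_d$ and $X$ is distal over $Y$.

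The main obstacle is the converse direction, where one must precisely quantify how orbits in the fibered square of a weakly mixing extension equidistribute relative to $X_d$; the exact formulation required is that for any set of positive measure $A \subseteq X$, the set of $(x,x')$ with $\pi(x)=\pi(x')$ such that some $g \in G$ sends both into $A$ has full measure in the relatively independent joining. This is exactly the content of the relative ergodic and joinings machinery developed in Chapter 10 of Glasner, and it is why the result is invoked rather than reproduced here.
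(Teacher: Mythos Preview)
The paper does not prove this proposition at all: it is stated as a citation from \cite{Glas}, Chapter 10, and no argument is given. Your sketch is therefore not being compared against an argument in the paper but against the standard proof in Glasner's book, which is essentially the one you outline. The forward direction via transfinite induction through the isometric tower, using shrinking neighborhoods of the identity coset in each fiber and diagonalizing at limit ordinals, is the standard construction; the converse via the Furstenberg--Zimmer dichotomy, showing that a nontrivial relatively weakly mixing extension is incompatible with a separating sieve because orbits in the relative product equidistribute, is likewise the standard route. Your identification of the key technical point in the converse (the relative ergodicity of the self-joining off the diagonal) is accurate, and this is indeed the content buried in Glasner's Chapter 10 that the paper is content to invoke rather than reproduce.
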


The following proposition extends Proposition \ref{exis}:

\begin{prop} \label{Prop:DistalMagicExtension}
Every ergodic distal system $(X,\mathcal{X},\mu,S,T)$ with commuting transformations $S$ and $T$ admits a free, ergodic, triple magic extension (as in Section \ref{Sec:BuildTopModel}) which is also distal. 
\end{prop}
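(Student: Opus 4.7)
The plan is to repeat the inductive construction from the proof of Proposition \ref{exis} essentially verbatim, adding the bookkeeping that each free, ergodic, magic extension produced along the way is additionally a \emph{distal} extension of its predecessor. Since distal extensions compose, and the class of distal extensions over a fixed base is closed under countable inverse limits (one assembles separating sieves coordinatewise along the inverse system), the resulting inverse limit will be a distal extension of $(X,\mathcal{X},\mu,S,T)$, hence itself distal by transitivity.

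The entire argument therefore reduces to a single claim: \emph{if $(Y,\nu,A,B)$ is ergodic, free and distal with commuting transformations $A,B$, then the free, ergodic, magic extension provided by Theorem \ref{MagicFree} is a distal extension of $Y$}. I would split this into two sub-steps. First, Host's magic extension $(Y^4,\nu_{A,B},A^*,B^*)$ of Theorem \ref{mig} is a distal extension of $Y$ via the projection onto the fourth coordinate. Indeed, $\nu_{A,B}$ is built from $\nu$ by two successive relatively independent squarings over the invariant sub-$\sigma$-algebras $\I(A)$ and then $\I(B\times B)$; since a relatively independent self-joining of a distal system over an invariant sub-factor remains distal (and distal extensions compose), this gives the distal lifting in two stages. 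Second, the passage from Host's magic extension to the free, ergodic variant in Theorem \ref{MagicFree} is carried out by crossing with a compact homogeneous space acted upon isometrically by a cocycle, and such a crossing is an isometric extension, hence automatically distal.

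The main obstacle will be to verify rigorously that $(Y^4,\nu_{A,B})$ is a distal extension of $Y$, because the acting group of the Host construction strictly contains the diagonal $G$-action (it includes $A^*=\id\times A\times\id\times A$ and $B^*=\id\times\id\times B\times B$), and these new generators do not descend to symmetries of $Y$ through the projection. The resolution is that distality over $Y$ is a fiberwise statement: the kernel of the projection to $Y$ acts fiberwise, and the fibers of $\nu_{A,B}$ are governed by invariant sub-factors of the distal system $(Y,\nu)$, which are themselves distal. Concretely, if $\{C_n\}$ is a separating sieve witnessing distality of $Y$ relative to the invariant sub-factor in play at a given squaring step, then sets of the form $(C_n\times C_n)\cap\mathrm{supp}(\nu_A)$, and at the next stage $(C_n\times C_n\times C_n\times C_n)\cap\mathrm{supp}(\nu_{A,B})$, give separating sieves for the successive Host joinings over $Y$. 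Once this fiberwise distality is established, the induction runs exactly as in Proposition \ref{exis}, and the inverse limit produces the desired free, ergodic, triple magic, distal extension of $(X,\mathcal{X},\mu,S,T)$.
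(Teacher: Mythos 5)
Your overall template (iterate as in Proposition~\ref{exis} and track distality along the way, then use closure of distality under inverse limits and transitivity) is sound, and your first sub-step is close to what the paper actually does: the paper exhibits $\{A_i\times A_i\times A_i\times A_i\}$ as a separating sieve for $(X^4,\mu_{S,T})$ under the group $\mathcal G_{S,T}$ generated by $S^\ast$, $T^\ast$ and the diagonals, using distality coordinatewise — essentially your ``fiberwise'' resolution of the obstacle you flag. (The paper states this as \emph{absolute} distality of the whole $X^4$ system rather than relative distality over $X$, which is cleaner here because the base $X$ is itself distal.)

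The genuine gap is in your second sub-step. The passage from the Host system $(X^4,\mu_{S,T},S^\ast,T^\ast)$, which is \emph{not} ergodic, to the free ergodic magic extension of Theorem~\ref{MagicFree} is \emph{not} a ``crossing with a compact homogeneous space acted upon isometrically by a cocycle,'' and it is not an isometric extension of anything. As the paper recalls from \cite{DS}, one takes the ergodic decomposition of $\mu_{S,T}$ under the subgroup $\langle S^\ast,T^\ast\rangle$, i.e.\ $\mu_{S,T}=\int\mu_{S,T,\vec x}\,d\mu_{S,T}(\vec x)$, and almost every component $(X^4,\mu_{S,T,\vec x},S^\ast,T^\ast)$ is already a free ergodic magic extension of $X$. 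Passing to an ergodic component under a \emph{strict subgroup} of the acting group is where distality could in principle be lost, and your isometric-crossing picture gives no control over this. The paper resolves it with a new lemma, Proposition~\ref{prop:DistalDecomposition}: if $(X,\mu,G)$ is ergodic distal with $G$ abelian and $H\le G$ is an infinite subgroup, then for $\mu$-a.e.\ $x$ the ergodic component $\mu_x$ under $H$ is distal for $H$ (proved by building a separating sieve $\{A_i^x\}$ for each component from a fixed separating sieve $\{A_i\}$). This is the missing ingredient in your argument; once you replace the incorrect isometric-extension claim with this decomposition lemma, the induction and inverse-limit steps go through as you describe.
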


To prove this results we need the following proposition, which we think is of independent interest.   We state it here in complete generality. 

\begin{prop} \label{prop:DistalDecomposition}
Let $(X,\mu,G)$ be an ergodic distal measure preserving system where $G$ is an abelian group action. Let $H$ be an infinite subgroup of $G$ and let $\mu=\int \mu_{x} d\mu(x)$ be the ergodic decomposition of $\mu$ under the action of $H$ ({\em i.e.} the disintegration of $\mu$ over the $\sigma$-algebra of $H$-invariant sets). Then for $\mu$-a.e $x \in X$, the measure $\mu_{x}$ is ergodic and distal for the action of $H$.
\end{prop}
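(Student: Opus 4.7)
The plan is to proceed by transfinite induction on the length $\eta$ of a distal tower for $(X,\mu,G)$. Since $(X,\mu,G)$ is ergodic and distal, by the definition recalled before the proposition there exists a tower of factors $(X_\theta, \mu_\theta, G)_{\theta \leq \eta}$ with $X_0$ trivial and $X_\eta = X$, where each successor step $X_{\theta+1}\to X_\theta$ is a $G$-isometric extension and each limit stage is an inverse limit. The induction hypothesis at stage $\theta$ is that for $\mu_\theta$-a.e. point $y\in X_\theta$, the $H$-ergodic component $(\mu_\theta)_y$ of $\mu_\theta$ makes $(X_\theta,(\mu_\theta)_y,H)$ an ergodic distal system. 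Ergodicity is automatic from the definition of the ergodic decomposition, so the content is distality. The base case is trivial, so the real work is in the successor and limit steps.

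For the successor step, suppose $X_{\theta+1}\cong X_\theta\times K/\Gamma$ with the $G$-action $g(y,a\Gamma)=(gy,\rho(g,y)a\Gamma)$. The compact group $K$ acts on $X_{\theta+1}$ by left translation on the fiber, preserves $\mu_{\theta+1}$, and commutes with the $G$-action (hence with $H$). Consequently $K$ permutes the $H$-ergodic components of $\mu_{\theta+1}$. Combining this with Mackey's theorem on the reduction of a compact-group cocycle over an ergodic abelian action, for $\mu_\theta$-a.e. $y$ the $H$-ergodic components of $\mu_{\theta+1}$ lying above $(\mu_\theta)_y$ are, after a measurable coboundary change, $H$-isometric extensions of $(X_\theta,(\mu_\theta)_y,H)$ with compact fiber $K'/\Gamma'$, where $K'$ is the Mackey range of $\rho|_H$ in $K$. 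By the induction hypothesis the base is $H$-distal, and an $H$-isometric extension of an $H$-distal system is $H$-distal by definition, so every such ergodic component is $H$-distal.

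For a limit stage $\lambda\leq \eta$, write $X_\lambda$ as the inverse limit of $(X_\theta)_{\theta<\lambda}$ with projections $\pi_\theta^\lambda\colon X_\lambda\to X_\theta$. The $H$-invariant $\sigma$-algebra of $X_\lambda$ is generated by the pullbacks of the $H$-invariant $\sigma$-algebras of the $X_\theta$, so for $\mu_\lambda$-a.e. $z\in X_\lambda$ the $H$-ergodic component $(\mu_\lambda)_z$ disintegrates coherently, with projection $(\pi_\theta^\lambda)_*(\mu_\lambda)_z=(\mu_\theta)_{\pi_\theta^\lambda(z)}$ for every $\theta<\lambda$. Thus $(X_\lambda,(\mu_\lambda)_z,H)$ is the inverse limit of the systems $(X_\theta,(\mu_\theta)_{\pi_\theta^\lambda(z)},H)$, each of which is $H$-distal by induction. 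Since inverse limits of distal systems are distal (by the third clause of the definition), $(X_\lambda,(\mu_\lambda)_z,H)$ is $H$-distal, completing the induction.

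The main obstacle is the successor step: justifying that the $H$-ergodic components of the cocycle extension $X_\theta\times_\rho K/\Gamma$ above a component $(\mu_\theta)_y$ are themselves $H$-isometric extensions of the base. This is the content of Mackey's cocycle reduction theorem, which provides a measurable $\phi\colon X_\theta\to K$ such that $\rho^\phi(h,y)=\phi(hy)^{-1}\rho(h,y)\phi(y)$ takes values in a closed subgroup $K'\leq K$ and the extension splits, fiberwise over the $H$-ergodic components of the base, into copies of the $K'/\Gamma'$-isometric extension. A secondary, essentially bookkeeping, issue is verifying that the exceptional null sets accumulated at each stage do not accumulate to a set of positive measure along the (possibly uncountable) tower; this is handled by choosing, at each successor stage, a version of the disintegration defined on an invariant full-measure subset and applying Fubini to the coherent family of projections, exactly as in standard treatments of Furstenberg--Zimmer theory.
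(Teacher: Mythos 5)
Your argument is correct in spirit but takes a genuinely different route from the paper, and it carries a much heavier technical burden than necessary. The paper sidesteps the tower entirely: it takes a single separating sieve $\{A_i\}$ for $(X,\mu,G)$ (using the equivalent characterization of distality recalled just before the proposition), observes that ergodicity of $\mu$ forces $\mu(\bigcup_{g\in G}gA_i)=1$ and hence $\mu_x(\bigcup_{g\in G}gA_i)=1$ for a.e.\ $x$, and then builds, for each such $x$, a nested sequence $A_i^x = B_{i-1}^x \cap g_i A_i$ of translates with $\mu_x(A_i^x)>0$ and $\mu_x(A_i^x)\to 0$. One checks directly that $\{A_i^x\}$ is a separating sieve for $(X,\mu_x,H)$, because the separation criterion for the original sieve is verified pointwise and only involves the $G$-orbit. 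This is short, elementary, and requires no structure theory.

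Your transfinite induction is a plausible alternative, but the successor step is where the real work lies and where you are leaning on machinery that needs substantial justification. You invoke Mackey cocycle reduction to say that, above each $H$-ergodic component $(\mu_\theta)_y$, the ergodic components of the isometric extension are themselves $H$-isometric extensions with fiber $K'/\Gamma'$. For a single fixed ergodic base this is classical; but here the base component varies over a measure space, so you actually need a \emph{relativized} Mackey theorem providing a measurable choice of the Mackey group $K'_y$ and the reducing coboundary $\phi_y$ as $y$ ranges over $X_\theta$, and you need to handle the passage from the group fiber $K$ to the homogeneous fiber $K/\Gamma$ (the stabilizer $\Gamma'$ may depend on the component and is only defined up to conjugacy). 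None of this is false, but it is a genuine body of structure theory (Zimmer-style relative cocycle reduction) that would need to be either carefully reproduced or precisely cited. Your limit-stage argument is fine, and the null-set bookkeeping is a non-issue since the paper's definition of distal already requires $\eta$ to be countable. In short: your approach works if the relative Mackey step is filled in rigorously, but the paper's separating-sieve argument proves the same thing in a few lines and avoids all of this.
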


\begin{proof} Since $(X,\mu,G)$ is ergodic and distal, there exists a separating sieve $\{A_i\}_{i\in \mathbb{N}}$ for $X$. By ergodicity, $\mu(\bigcup_{g\in G} g A_i)=1$ for all $i\in \mathbb{N}$. So for $\mu$-a.e. $x \in X$,  $\mu_{x} (\bigcup_{g\in G} g A_i)=1$ for all $i\in \mathbb{N}$. Since we are disintegrating over the $\sigma$-algebra of $H$-invariant sets, we have that $\mu_{x}$ is ergodic under the action of $H$ for $\mu$-a.e $x\in X$. It suffices to show that $(X,\mu_{x},H)$ is distal. We may assume that $\mu_{x}$ is a non-atomic measure, since otherwise it is a rotation on a finite set. 

We claim that we can construct a separating sieve $\{A_i^{x}\}_{i\in \mathbb{N}}$ for $\mu$-a.e. $x\in X$. To do so, for $\mu$-a.e $x\in X$, we can find $g_1 \in G$ such that $\mu_{x}(g_1A_1)>0$. Set $A_1^{x}=g_1A_1$.  Since $\mu_{x}$ is non-atomic, we can find $B_1^{x}\subseteq A_1^{x}$ with the half of the measure of $A_1^{x}$ and find $g_2 \in G$ such that $\mu_{x}(B_1^{x}\cap g_2 A_2)>0$. We set $A_2^{x}=B_1^{x}\cap g_2A_2$. Inductively, if we have defined $A_i^{x}$, we take a subset $B_{i}^{x}$ with the half of its measure, then pick $g_{i+1}\in G$ such that $\mu_{x}(B_i^{x} \cap g_{i+1}A_{i+1})>0$ and set $A_{i+1}^{x}=B_i^{x}\cap g_{i+1}A_{i+1}$. 
By construction we have that $A_{i+1}^{x}\subseteq A_i^{x}$, $\mu_{x}(A_{i}^{x})>0$ and $\mu_{x}(A_{i}^{x})\to 0$ as $i\to\infty$. It is now easy to check that $\{A_i^{x}\}_{i\in \mathbb{N}}$ is a separating sieve for $(X,\mu_{x},H)$.
\end{proof}

\begin{rem}
The statement of Proposition \ref{prop:DistalDecomposition} is trivial in the topological setting (subactions of topological distal are topological distal systems), but we did not find a reference in the measurable case.
\end{rem}

\begin{proof}[Proof of Proposition \ref{Prop:DistalMagicExtension}]

Let $\{A_i\}_{i\in \mathbb{N}}$ be a separating sieve for $(X,\mu,G)$ (over the trivial system).

{\bf Claim: } $\{A_i\times A_i\times A_i\times A_i\}_{i\in \mathbb{N}}$ is a separating sieve for $(X^{4}, \mathcal{X}^4,\mu_{S,T})$ for the action spanned by $S^{\ast}=\id\times S\times\id \times S$, $T^{\ast}=\id\times \id \times T\times T$ and the diagonals $S\times S\times S\times S$, $T\times T\times T\times T$. For convenience let $\mathcal{G}_{S,T}$ denote this group.
Note that the Jensen inequality implies that 

\[\mu_{S}(A_i\times A_i)^{1/2}=\Bigl(\int_{X} |\mathbb{E}(1_{A_i}\vert I(S))|^2 d\mu \Bigr)^{1/2} \geq \int_{X} \mathbb{E}(1_{A_i}\vert I(S))d\mu=\mu(A_i)>0.\] 

Similarly 
\[\mu_{S,T}(A_i\times A_i\times A_i\times A_i)^{1/4}\geq \mu_{S}(A_i\times A_i)^{1/2}\geq \mu(A_i)>0.\]

So \[0<\mu_{S,T}(A_i\times A_i\times A_i\times A_i)\leq \mu_{S,T}(A_i\times X\times X\times X)=\mu(A_i)\to 0.\] 

On the other hand, let $(x_0,x_1,x_2,x_3), (y_0,y_1,y_2,y_3)\in X^{4}$ so that for all $i\in\mathbb{N}$, there exists $(g_0,g_1,g_2,g_3)\in \mathcal{G}_{S,T}$ with $ (g_0x_0,g_1x_1,g_2x_2,g_3x_3)$, $(g_0y_0,g_1y_1,g_2y_2,g_3y_3) \in A_i\times A_i \times A_i \times A_i$. By the distality on each coordinate  (and that $\{A_i\}_{i\in \mathbb{N}}$ is a separating sieve), we have that  $(x_0,x_1,x_2,x_3)=(y_0,y_1,y_2,y_3)$ and the claim is proved. 
 
 \
 
 Let the notations be the same as in Subsection \ref{Subsec:Magic}.
 Let $$\mu_{S,T}=\int \mu_{S,T,\vec{x}}d\mu_{S,T}(\vec{x})$$ be the ergodic decomposition of $\mu_{S,T}$ under $\langle{S^{\ast}},{T^{\ast}} \rangle$. It is shown in \cite{DS} that for $\mu_{S,T}$-almost every $\vec{x}\in {X}^4$, the system $(X^{4},\mu_{S,T,\vec{x}}, {S^{\ast}},{T^{\ast}})$ is a free ergodic magic extension of $(X,\mathcal{X},\mu,S,T)$. By Proposition \ref{prop:DistalDecomposition}, for $\mu_{S,T}$-a.e $\vec{x}\in X^4$, $\mu_{S,T,\vec{x}}$ is also distal for $\langle S^{\ast},T^{\ast}\rangle$ and the result follows.
\end{proof}

\subsection{Proof of Theorem \ref{THM:2}}  
In what follows, to lighten notation we use the same letters $S$ and $T$ to denote the transformations in a system and its factors. In order to prove Theorem \ref{THM:2}, it suffices to show that the pointwise convergence of the average $\frac{1}{N} \sum\limits_{i=0}^{N-1} f_1(S^ix)f_2(T^ix)$ can be lifted by some isometric extensions. 
The following result is similar to Theorem 6.1 of \cite{HSY}, but we provide the details for completion: 

\begin{prop}\label{decm}
Let $(X_{1},\mathcal{X}_{1},\mu_{1}, S,T)$ and $(X_{2},\mathcal{X}_{2},\mu_{2}, S,T)$ be two ergodic systems with commuting transformations $S$ and $T$ sharing a common free, ergodic, triple magic extension $(X,\mathcal{X},\mu, S,T)$. Let $p_{i}\colon X\to X_{i}$ be the factor map for $i=1,2$. Then there exist a family $\{\mu_{x}\}_{x\in X}$ of measures on $X_{1}\times X_{2}$ such that 

(1) $\mu_{x}$ is ergodic under $S\times T$ for $\mu$-a.e. $x\in X$.

(2) For all $f_{i}\in L^{\infty}(\mu_{i}), i=1,2$, we have
$$\frac{1}{N}\sum_{n=0}^{N-1}f_{1}(S^{n} p_{1}x)f_{2}(T^{n} p_{2}x)\to \int_{X_{1}\times X_{2}}f_{1}\otimes f_{2} d\mu_{x}$$
in the $L^{2}(\mu)$ norm as $N\to\infty$ for $\mu$-a.e. $x\in X$.
\end{prop}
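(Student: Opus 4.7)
The plan is to produce $\{\mu_x\}$ as the fibre measures of the unique invariant measure on a strictly ergodic factor of $N_{S,T}(X)$, and to derive the convergence by a weak-$*$ argument that identifies an $L^2$ limit independently known to exist.

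First, using Theorem \ref{thm:uniquelyergodicmodel} we may assume $X$ is its strictly ergodic model for which $(N_{S,T}(X), H_{S,T})$ is strictly ergodic, and by Theorem \ref{Weiss} applied to $p_i\colon X\to X_i$ we may simultaneously replace $X_1, X_2$ by topological models making $p_1,p_2$ continuous. Set $M := (\id\times p_1\times p_2)(N_{S,T}(X))\subseteq X\times X_1\times X_2$; as a continuous equivariant image of a strictly ergodic system, $(M,H_{S,T})$ is strictly ergodic. Let $\tilde\mu$ be its unique invariant measure and disintegrate
\[
\tilde\mu=\int_X \delta_x\times \mu_x\, d\mu(x)
\]
over the first coordinate. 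The $(\id\times S\times T)$-invariance of $\tilde\mu$ forces the $(S\times T)$-invariance of $\mu_x$ for $\mu$-a.e.\ $x$. For the ergodicity of $\mu_x$, I would argue that the $\sigma$-algebra $\mathcal{I}(\id\times S\times T)$ on $(M,\tilde\mu)$ agrees modulo null sets with the pullback of the Borel $\sigma$-algebra of $X$: the inclusion $\supseteq$ is automatic, and for $\subseteq$ one combines the $H_{S,T}$-ergodicity of $\tilde\mu$ with the strict ergodicity of the diagonal $G$-action on $X$ to force any $(\id\times S\times T)$-invariant $L^2$ function to be $\tilde\mu$-a.e.\ a lift from $X$. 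This makes the $(\id\times S\times T)$-ergodic decomposition coincide with the disintegration over $X$, whence $\mu_x$ is $(S\times T)$-ergodic for $\mu$-a.e.\ $x$.

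For the convergence, set $\nu_0 := (\id\times p_1\times p_2)_*\mu$ on $M$. Because $\mu$ is $G$-invariant and the $p_i$ are equivariant, $\nu_0$ is invariant under the diagonal action $\{g\times g\times g:g\in G\}$. Diagonals commute with $\id\times S\times T$, so each averaged measure $\nu_N:=\frac{1}{N}\sum_{n=0}^{N-1}((\id\times S\times T)^n)_*\nu_0$ remains diagonal-invariant, and any weak-$*$ limit of $(\nu_N)$ is $H_{S,T}$-invariant; by unique ergodicity this limit must equal $\tilde\mu$, so $\nu_N\to\tilde\mu$ weak-$*$. Testing against $F(x,y,z)=g(x)f_1(y)f_2(z)$ for $g\in C(X)$ and continuous $f_1, f_2$, and setting $A_N(x):=\frac{1}{N}\sum_{n=0}^{N-1}f_1(S^np_1x)f_2(T^np_2x)$ and $L(x):=\int f_1\otimes f_2\, d\mu_x$, yields
\[
\int_X g\cdot A_N\, d\mu \longrightarrow \int_X g\cdot L\, d\mu,
\]
i.e.\ weak $L^2(\mu)$-convergence $A_N\to L$. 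On the other hand, $A_N$ converges strongly in $L^2(\mu)$ by the $L^2$-convergence theorem for two commuting transformations (Conze--Lesigne, or \cite{Walsh}) applied to $f_1\circ p_1$ and $f_2\circ p_2\in L^\infty(\mu)$. The strong and weak limits must coincide, so $A_N\to L$ in $L^2(\mu)$, and a density argument in $f_1,f_2$ extends the identification from continuous to bounded measurable test functions.

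The hardest step will be the ergodicity claim: proving that $\mathcal{I}(\id\times S\times T)$ on $(M,\tilde\mu)$ reduces modulo null sets to the first coordinate. Extracting ergodicity of the sub-action $\langle \id\times S\times T\rangle$ on the fibres of $M\to X$ from the $H_{S,T}$-ergodicity of $\tilde\mu$ is the only place where the strict ergodicity of $N_{S,T}(X)$ really has to be exploited.
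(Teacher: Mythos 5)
Your construction of $\mu_x$ (disintegrate the unique $H_{S,T}$-invariant measure on the pushforward of $N_{S,T}(X)$ over the first coordinate) is in the right spirit, and your convergence argument (weak-$*$ convergence $\nu_N\to\tilde\mu$ by unique ergodicity, then identifying the weak $L^2$ limit with the strong $L^2$ limit guaranteed by Conze--Lesigne/Walsh) is correct and slightly different from the paper, which instead tests the already-established pointwise limit of the triple average (Proposition \ref{Prop:PointConverBigAverage}) against an arbitrary $f_0\in L^\infty(\mu)$. Either route works for part (2).

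The gap is in part (1), the ergodicity of $\mu_x$, and you flagged the right spot. Your sketch proposes to deduce $\mathcal{I}(\id\times S\times T)\subseteq(\text{first coordinate})$ on $(M,\tilde\mu)$ by combining the $H_{S,T}$-ergodicity of $\tilde\mu$ with the ergodicity of the diagonal $G$-action on $X$. This does not follow: what those two facts give is that the factor associated to $\mathcal{I}(\id\times S\times T)$ and the first-coordinate factor $X$ are both ergodic under the diagonal $G$-action, with $X$ a factor of the former. A chain of two ergodic factors, one sitting inside the other, need not collapse to equality --- for instance $\Z_{S,T}(X)\to X$ is such a chain. To pin down $\mathcal{I}(\id\times S\times T)$ you actually need the structural input that on $(X^2,\mu_R)$ the $(S\times T)$-invariant $\sigma$-algebra coincides with $\Z_{S,T}$ (Lemma \ref{ist} / Corollary \ref{equiv}), which the paper proves using the triple-magic hypothesis and which then yields that the fibre measures $m_s$ in $\lambda_{S,T}=\int_{\Z_{S,T}}\theta_s\times m_s\,d\nu_{S,T}(s)$ are $(S\times T)$-ergodic. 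Notice that nothing in your ergodicity argument actually uses the triple-magic assumption, which is a warning sign: the conclusion is false without it. To repair the proof, replace the soft ergodicity sketch with an appeal to Corollary \ref{equiv}, showing that the disintegration of $\lambda_{S,T}$ over the first coordinate factors through $\Z_{S,T}$ and that the fibres $m_s$ are already ergodic; then push forward by $p_1\times p_2$.
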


\begin{proof} 
By Theorem \ref{thm:uniquelyergodicmodel}, we may assume that $X$ is endowed with a topological structure so that $(N_{S,T}(X),H_{S,T})$ is uniquely ergodic with measure $\l_{S,T}$.
Recall from the proof of Theorem \ref{thm:uniquelyergodicmodel} that 
$$\l_{S,T}=\int_{\Z_{S,T}}\theta_{s}\times m_{s}d\nu_{S,T}(s),$$
where 
$$\mu=\int_{\Z_{S,T}}\theta_{s} d\nu_{S,T}(s)$$
is the disintegration of $\mu$ over $\nu_{S,T}$,
and 
$$\mu_{R}=\int_{\Z_{S,T}} m_{s} d\nu_{S,T}(s)$$
is the disintegration of $\mu_{R}$ over $\nu_{S,T}$ (recall that $\mu_{R}=\mu\times_{I(R)}\mu, \mu_{S,T}=\mu_{S}\times_{I(T\times T)}\mu_{S}$). By Lemma \ref{ist}, the $(S\times T)$-invariant $\sigma$-algebra is isomorphic to $\Z_{S,T}$, so $m_s$ is an $(S\times T)$-ergodic measure on $\Q_{R}(X)$ for almost every $s\in \Z_{S,T}$. Therefore, $$m'_{s}\coloneqq(p_{1}\times p_{2})_{*}m_{s}$$ is an $(S\times T)$-ergodic measure on $X_{1}\times X_{2}$ for almost every $s\in \Z_{S,T}$.

Let $\pi_{R}\colon X\to\Z_{S,T}$ be the projection map.
For $x\in X$, let
$\mu'_{x}=m_{\pi_{R}(x)}$ and $\mu_{x}=(p_{1}\times p_{2})_{*}\mu'_{x}.$
Then for $\mu$-a.e. $x\in X$, $\mu_{x}$ is ergodic under $S\times T$. This prove the existence of the family of measures $\{\mu'_x\}_{x\in X}$. We now prove that this family satisfies (2). We first claim that $\lambda_{S,T}=\int \delta_x\times \mu'_x d\mu$. In fact, 

\begin{align*}
\int_{X} \delta_x\times \mu'_xd\mu(x) =\int_{X} \delta_x\times m_{\pi(x)}d\mu(x)=\int_{\Z_{S,T}} \int_{X} \delta_x \times m_{\pi(x)} d\theta_{s} (x) d\nu_{S,T}(s) \\ 
=\int_{\Z_{S,T}} \Bigl(\int_{X} \delta_x d\theta_{s} (x)\Bigr) \times m_{s}  d\nu_{S,T}(s)=\int_{\Z_{S,T}}\theta_{s}\times m_{s}d\nu_{S,T}(s)=\lambda_{S,T}.
\end{align*} 
Fix $f_{i}\in L^{\infty}(\mu_{i}), i=1,2$, and let $g(x)$ be the $L^2$ limit of $\frac{1}{N} \sum_{i=0}^{N-1}f_1(S^i p_{1}x)f_2(T^i p_{2}x)$. By Proposition \ref{Prop:PointConverBigAverage}, for all $f_0 \in L^{\infty}(\mu)$, we have
\begin{align*} 
\int_{X} f_0(x)g(x)d\mu(x) & =\lim_{N\to\infty} \int_{X} \frac{1}{N} \sum_{i=0}^{N-1} f_0(x)f_1(S^i p_{1}x)f_2(T^i p_{2}x)d\mu(x)
\\
&=\lim_{N\to\infty} \int_{X} \frac{1}{N^3} \sum_{i,j,k=0}^{N-1} f_0(S^{j}T^{k} x)f_1(S^{i+j}T^{k} p_{1}x)f_2(S^jT^{i+k} p_{2}x) d\mu(x)
\\&= \int_{X} f_0\otimes (f_1\circ p_{1}) \otimes (f_2\circ p_{2}) d\lambda_{S,T}
\\&=\int_{X} f_0(x)\Bigl(\int_{X\times X} (f_1\circ p_{1}) \otimes (f_2\circ p_{2}) d\mu'_x\Bigr)d\mu(x)
\\&=\int_{X} f_0(x)\Bigl(\int_{X_{1}\times X_{2}} f_1 \otimes f_2 d\mu_x\Bigr)d\mu(x).   
\end{align*}
So $g(x)=\int_{X_{1}\times X_{2}}f_{1}\otimes f_{2} d\mu_{x}$ for $\mu$-a.e. $x\in X$ and the proof is finished.
\end{proof}

\begin{lem}\label{dp}
Let $\pi_i\colon (X_{i},\mathcal{X}_{i}, \mu_{i},S,T)\to (Y_{i},\mathcal{Y}_{i},\nu_{i},S,T)$ be a factor map between two ergodic systems for $i=1,2$. Suppose that there exists a common free, ergodic, triple magic extension system $X$ of $X_{1}$ and $X_{2}$.
Let $\{\mu_{x}\}_{x\in X}$ and $\{\nu_{x}\}_{x\in X}$ be the measures defined in Theorem \ref{decm} (for the couples $X_1,X_2$ and $Y_1,Y_2$). Suppose $\Z_{S,R}(X)$ is a factor of $Y_1$ and $\Z_{T,R}(X)$ is a factor of $Y_{2}$. Then for all $f_{i}\in L^{\infty}(\mu_{i}), i=1,2$, we have
$$\int_{X_{1}\times X_{2}}f_{1}\otimes f_{2} d\mu_{x}=\int_{Y_{1}\times Y_{2} } \mathbb{E}(f_{1}\vert Y_1)\otimes \mathbb{E}(f_{2}\vert Y_2) d\nu_{x}$$
for $\mu$-a.e. $x\in X$.
\end{lem}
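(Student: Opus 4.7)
The plan is to express both sides of the claimed equality as $L^2(\mu)$-limits of ergodic averages via Proposition \ref{decm} and then show their difference vanishes using the seminorm bound in Theorem \ref{Cauchy}-(4) together with the triple magic property of $X$.

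First I would set $F_i=f_i\circ p_i$ and $G_i=\mathbb{E}(f_i\vert Y_i)\circ \pi_i\circ p_i$, viewed as functions on $X$, and let $\mathcal{B}_{Y_i}=(\pi_i\circ p_i)^{-1}\mathcal{Y}_i\subseteq \mathcal{X}$ be the pullback $\sigma$-algebra. Since $p_i$ and $\pi_i$ are factor maps, $G_i=\mathbb{E}(F_i\vert \mathcal{B}_{Y_i})$, so $H_i:=F_i-G_i$ satisfies $\mathbb{E}(H_i\vert \mathcal{B}_{Y_i})=0$. Note that $X$, being a common free, ergodic, triple magic extension of $X_1,X_2$, is also such an extension of $Y_1,Y_2$; therefore Proposition \ref{decm} applies to both pairs and yields that
\[ A_N(F_1,F_2)(x)\coloneqq \frac{1}{N}\sum_{n=0}^{N-1}F_1(S^nx)F_2(T^nx) \xrightarrow[N\to\infty]{L^2(\mu)} \int_{X_1\times X_2} f_1\otimes f_2\, d\mu_x, \]
and similarly $A_N(G_1,G_2)(x)\to \int_{Y_1\times Y_2}\mathbb{E}(f_1\vert Y_1)\otimes \mathbb{E}(f_2\vert Y_2)\, d\nu_x$ in $L^2(\mu)$.

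Next I would expand
\[ A_N(F_1,F_2)-A_N(G_1,G_2)=A_N(H_1,G_2)+A_N(G_1,H_2)+A_N(H_1,H_2) \]
and invoke Theorem \ref{Cauchy}-(4), which gives
\[ \limsup_{N\to\infty}\Vert A_N(u,v)\Vert_{L^2(\mu)}\leq \min\bigl\{\normm{u}_{\mu,S,R},\normm{v}_{\mu,T,R}\bigr\} \]
for all bounded $u,v$. So it suffices to show $\normm{H_1}_{\mu,S,R}=0$ and $\normm{H_2}_{\mu,T,R}=0$. Since $X$ is triple magic, $(X,\mu,S,R)$ is magic, so $\normm{H_1}_{\mu,S,R}=0$ is equivalent to $\mathbb{E}(H_1\vert \mathcal{I}(S)\vee \mathcal{I}(R))=0$. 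The hypothesis that $\Z_{S,R}(X)$ is a factor of $Y_1$ translates precisely into the $\sigma$-algebra inclusion $\mathcal{I}(S)\vee \mathcal{I}(R)\subseteq \mathcal{B}_{Y_1}$, and then by the tower property
\[ \mathbb{E}\bigl(H_1\,\big\vert\,\mathcal{I}(S)\vee \mathcal{I}(R)\bigr)=\mathbb{E}\Bigl(\mathbb{E}(H_1\vert \mathcal{B}_{Y_1})\,\Big\vert\,\mathcal{I}(S)\vee \mathcal{I}(R)\Bigr)=0. \]
The argument for $H_2$ is symmetric, using the magic property of $(X,\mu,T,R)$ and the hypothesis $\Z_{T,R}(X)\subseteq Y_2$. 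Consequently each of the three cross-terms tends to $0$ in $L^2(\mu)$, so the two limits agree in $L^2(\mu)$, hence $\mu$-a.e.

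I do not anticipate a serious obstacle: the whole argument is a decomposition plus a single application of the Cauchy--Schwartz-type bound. The most delicate point to check carefully is the translation of the factor hypothesis ``$\Z_{S,R}(X)$ is a factor of $Y_1$'' into the $\sigma$-algebra inclusion $\mathcal{I}(S)\vee \mathcal{I}(R)\subseteq \mathcal{B}_{Y_1}$ on $X$, since $\Z_{S,R}(X)$ is defined as a factor of $X$ but the hypothesis is phrased at the level of $Y_1$; this is where the commutativity of the diagram of factor maps $X\to X_1\to Y_1\to \Z_{S,R}(X)$ is implicitly used, and it must hold in order for the tower computation of conditional expectations to go through.
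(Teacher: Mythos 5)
Your proof is correct and follows essentially the same route as the paper: you invoke Theorem \ref{Cauchy}-(4) to bound the averages by the Host seminorms, use the triple magic property to convert the seminorm vanishing into the vanishing of a conditional expectation, and use the factor hypotheses (i.e., the inclusions $\mathcal{I}(S)\vee\mathcal{I}(R)\subseteq\mathcal{B}_{Y_1}$ and $\mathcal{I}(T)\vee\mathcal{I}(R)\subseteq\mathcal{B}_{Y_2}$) to get that vanishing from the tower property, before identifying both limits via Proposition \ref{decm}. The only cosmetic difference is that you decompose $F_i=G_i+H_i$ and expand into three cross-terms, whereas the paper phrases the same idea as a two-stage replacement (first projecting onto $\Z_{S,R}(X)$ and $\Z_{T,R}(X)$, then onto $Y_1$ and $Y_2$); these are interchangeable.
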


\begin{proof}
Let $p_1,p_1',p_2,p_2'$ be the projections from $X$ to $X_1,Y_1,X_2,Y_2$, respectively, and $f_{i}\in L^{\infty}(\mu_{i}), i=1,2$.
By Theorem \ref{Cauchy}-(4) and the fact that $X$ is triple magic, we have that 
$$\lim_{N\to\infty}\Bigl\Vert\frac{1}{N}\sum\limits_{i=0}^{N-1} f_1(S^ip_1x)f_2(T^ip_2x)-\frac{1}{N}\sum\limits_{i=0}^{N-1} \mathbb{E}(f_1\vert\Z_{S,R}(X))(S^ip_1x)\mathbb{E}(f_2\vert\Z_{T,R}(X))(T^ip_2x)\Bigr\Vert_{L^{2}(\mu)}=0.$$ 
 The conditions that $\Z_{S,R}(X)$ is a factor of $Y_1$ and $\Z_{T,R}(X)$ is a factor of $Y_{2}$ allow us to conclude that 
 $$\lim_{N\to\infty}\Bigl\Vert\frac{1}{N}\sum\limits_{i=0}^{N-1} f_1(S^ip_1x)f_2(T^ip_2x)-\frac{1}{N}\sum\limits_{i=0}^{N-1} \mathbb{E}(f_1\vert {Y}_1)(S^ip_1'x)\mathbb{E}(f_2\vert {Y}_2)(T^ip_2'x)\Bigr\Vert_{L^{2}(\mu)}=0.$$ 
 By Theorem \ref{decm}, we have
\[ \int_{X_{1}\times X_{2}}f_{1}\otimes f_{2} d\mu_{x}=\int_{Y_{1}\times Y_2}\mathbb{E}(f_{1}\vert  {Y}_1)\otimes\mathbb{E}(f_2\vert  {Y}_2) d\nu_{x}\]   
for $\mu$-a.e. $x\in X$.
 \end{proof}

\begin{defn}
Let $\pi\colon X\to Y$ be an isometric extension with fiber $H/\Gamma$ and let $\phi\colon H\to \mathbb{R}_{+}$ be a continuous function. We say that $\phi$ is a {\it weight} if $\int_H \phi(h)dm(h) =1$ and $\phi(h^{-1}gh)=\phi(g)$ for all $g,h\in H$.

Let $f\in L^{\infty}(\mu)$. The conditional expectation of $f$ with weight $\phi$ over $Y$ is defined to be 
\[\mathbb{E}_{\phi}(f\vert \mathcal{Y})(x)=\int_{G} f(hx)\phi(h)dm(h). \]
\end{defn} 
\begin{rem}\label{ConditionPhi}
We use the cursive symbol $\mathcal{Y}$ to stress that this function may not be constant on the fibers of $\pi$ (thus is not a function on $Y$). Remark also that if $\phi=1$, $\mathbb{E}_{\phi}(f\vert \mathcal{Y})(x)=\mathbb{E}(f\vert \mathcal{Y})(x)=\mathbb{E}(f\vert {Y})(\pi(x))$.
\end{rem}
These weighted conditional expectations were considered in Proposition 6.3 in \cite{HSY} and they are helpful when lifting the property of pointwise convergence.

\begin{lem} \label{lem:WeightedConditional}
Let $\pi\colon X\to Y$ be an isometric extension with fiber $H/\Gamma$. Let $\phi\colon H\to \mathbb{R}_{+}$ be a weight and $f\in L^{\infty}(\mu)$. Then for $R=S$ or $T$, we have

\[ \mathbb{E}_{\phi}(f\circ R \vert \mathcal{Y})(x)=\int_{H} f\circ h \circ R (x)\phi(h)dm(h).\]

\end{lem}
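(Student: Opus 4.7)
The plan is to unpack the definition of $\mathbb{E}_\phi$ and then use the cocycle structure of the isometric extension together with the two invariance properties of the weight to move $R$ past $h$ inside the integral.

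First, by definition,
\[ \mathbb{E}_\phi(f\circ R\vert \mathcal{Y})(x)=\int_H f(Rhx)\phi(h)\,dm(h), \]
so it suffices to show that
\[ \int_H f(Rhx)\phi(h)\,dm(h)=\int_H f(hRx)\phi(h)\,dm(h). \]
To compare $Rhx$ with $hRx$, I will use the model $X\cong Y\times_{\rho} H/\Gamma$, where $R$ acts by $R(y,a\Gamma)=(Ry,\rho(R,y)a\Gamma)$ and $h$ acts by $h\cdot(y,a\Gamma)=(y,ha\Gamma)$. Writing $x=(y,a\Gamma)$ and setting $\sigma=\rho(R,y)\in H$, a direct computation gives
\[ Rhx=(Ry,\sigma ha\Gamma)=(\sigma h\sigma^{-1})\cdot(Ry,\sigma a\Gamma)=(\sigma h\sigma^{-1})\cdot Rx. \]
Substituting this into the left-hand integral,
\[ \int_H f(Rhx)\phi(h)\,dm(h)=\int_H f\bigl((\sigma h\sigma^{-1})Rx\bigr)\phi(h)\,dm(h). \]

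Now I change variables $h'=\sigma h\sigma^{-1}$. Since $H$ is a compact group, its Haar measure is conjugation-invariant, so $dm(h)=dm(h')$. The integral becomes
\[ \int_H f(h'Rx)\,\phi(\sigma^{-1}h'\sigma)\,dm(h'), \]
and the conjugation-invariance condition $\phi(\sigma^{-1}h'\sigma)=\phi(h')$ from the definition of a weight yields
\[ \int_H f(h'Rx)\phi(h')\,dm(h'), \]
which is exactly $\int_H f\circ h\circ R(x)\,\phi(h)\,dm(h)$, as desired.

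The step I expect to require the most care is verifying the commutation identity $Rhx=(\sigma h\sigma^{-1})Rx$ on the model $Y\times_\rho H/\Gamma$; once this cocycle computation is in hand, the conclusion follows immediately from the two invariance properties built into the notion of a weight (conjugation invariance of $\phi$) and into compactness of $H$ (conjugation invariance of Haar measure). Nothing in the argument uses any special feature of $R$ beyond $R\in G$, so the statement holds uniformly for $R=S$ and $R=T$.
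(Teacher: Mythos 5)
Your proof is correct and takes essentially the same approach as the paper's: you pass to the model $Y\times_\rho H/\Gamma$, use the cocycle formula to commute $R$ past $h$ (at the cost of conjugating $h$ by $\rho(R,y)$), and then absorb the conjugation via the change of variables in the $H$-integral, using the conjugation-invariance of both Haar measure and the weight $\phi$. The only cosmetic difference is that you package the cocycle computation as the clean identity $Rhx=(\sigma h\sigma^{-1})Rx$ before substituting, while the paper carries out the same manipulation inline.
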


\begin{proof}
Since $\pi$ is isometric, modulo a measure preserving isomorphism, the dynamics is given by a cocycle $\rho$.  So we may consider $\Phi\colon X \to Y\times H$ as a measure preserving isomorphism so that $\Phi(Sx)=(Sy, \rho(S,y)h'\Gamma)$, $\Phi(Tx)=(Ty, \rho(T,y)h'\Gamma)$, where $\Phi(x)=(y,h'\Gamma)$. The action of the compact group $H$ is given by $\Phi(hx)=(y, h h'\Gamma)$.

Let $\Phi(x)=(y,h'\Gamma)$. We have
\begin{align*}
&\mathbb{E}_{\phi}(f\circ R \vert \mathcal{Y})(x)=\int_{H} f\circ R(hx)\phi(h)dm(h)=\int_{H} f\circ R\circ \Phi^{-1} (y,hh'\Gamma)\phi(h) dm(h)\\
=& \int_{H} f\circ \Phi^{-1} \circ R(y,hh'\Gamma)\phi(h) dm(h)=\int_{H} f\circ \Phi^{-1} (Ry,\rho(R,y)hh'\Gamma)\phi(h) dm(h).  
\end{align*}
Changing variables from $h$ to $\rho(R,y)^{-1}h\rho(R,y)$, and using the invariance of $m$ and $\phi$ under this transformation, we get that 

\begin{align*}
&{E}_{\phi}(f\circ R \vert \mathcal{Y})(x)= \int_{H} f\circ \Phi^{-1} \circ (Ry,h\rho(R,y)h'\Gamma)\phi(h) dm(h)\\
&=\int_{H} f\circ \Phi^{-1}  h R(y,h'\Gamma)\phi(h) dm(h)= \int_{H} f\circ h\circ R(x) \phi(h) dm(h).
\end{align*}
\end{proof}

\begin{prop}\label{liftiso}
Let $(X,\mathcal{X},\mu,S,T)$, $(X_{i},\mathcal{X}_{i},\mu_{i},S,T)$ , $(Y_{i},\mathcal{Y}_{i}, \nu_{i},S,T), i=1,2$ be systems satisfying the assumption in Lemma \ref{dp}. Let $p_{i}\colon X\to X_{i}$, $p'_{i}\colon X\to Y_{i}$ $i=1,2$ be the factor maps. Suppose that $\pi_i\colon (X_{i},\mathcal{X}_{i},\mu_{i},S,T)\to (Y_{i},\mathcal{Y}_{i}, \nu_{i},S,T)$ is an isometric extension with fiber $H_i/\Gamma_i$. 
If the limit
$$\lim_{N\to\infty}\frac{1}{N}\sum_{i=0}^{N-1}f_{1}(S^{i} p'_{1}x)f_{2}(T^{i} p'_{2}x)$$
exists for $\mu$-a.e. $x\in X$ for all $f_{i}\in L^{\infty}(\nu_{i}), i=1,2$, then the limit
$$\lim_{N\to\infty}\frac{1}{N}\sum_{i=0}^{N-1}g_{1}(S^{i} p_{1}x)g_{2}(T^{i} p_{2}x)$$
exists for $\mu$-a.e. $x\in X$ for all $g_{i}\in L^{\infty}(\mu_{i}), i=1,2$.
\end{prop}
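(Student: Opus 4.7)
The plan is to decompose $g_1$ and $g_2$ along the fiber groups $H_1,H_2$ via Peter--Weyl and treat each isotypic component separately. After passing to the topological model of Theorem \ref{thm:uniquelyergodicmodel} and approximating $g_i$ in $L^\infty$ by continuous functions, the resulting $L^1$-type error is controlled $\mu$-a.e.\ by the Pointwise Ergodic Theorem for the $S$- and $T$-actions, exactly as in the proof of Theorem \ref{Prop:PointConverBigAverage}; so we may assume each $g_i$ is continuous. Using the weighted conditional expectations $\mathbb{E}_{\phi_\sigma}(\cdot\mid\mathcal{Y}_i)$ with weights $\phi_\sigma(h)=\dim(\sigma)\overline{\chi_\sigma(h)}$ (these are conjugation invariant and hence weights in the sense of the definition preceding Remark \ref{ConditionPhi}), Peter--Weyl yields $g_i=\sum_\sigma \mathbb{E}_{\phi_\sigma}(g_i\mid\mathcal{Y}_i)$ in $L^2(\mu_i)$. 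Truncating to finitely many $\sigma$ leaves a tail whose $L^2$-norm, and by the Pointwise Ergodic Theorem whose a.e.\ contribution, is arbitrarily small. The trivial representation produces $\mathbb{E}(g_i\mid Y_i)\circ\pi_i$, and the product average of two such terms is covered directly by the hypothesis.

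It remains to show that products involving at least one non-trivial $\sigma$-component have averages tending to $0$ $\mu$-a.e. By a further density step, such a component may be taken to be a finite sum of products $f_i(y_i)\psi_i(h_i\Gamma_i)$, with $f_i\in C(Y_i)$ and $\psi_i$ a continuous matrix coefficient of a non-trivial irrep of $H_i$; in particular $\mathbb{E}(f_i\psi_i\mid Y_i)=0$. Using the skew-product formula $S^np_i(x)=(S^ny_i,\rho_i^{(n)}(y_i)h_i(x))$ and the representation identity $\psi(gh)=\sum_k\alpha_k(g)\beta_k(h)$, the average splits into a finite linear combination
\[
\sum_{k,\ell}\beta_{1,k}(h_1(x))\beta_{2,\ell}(h_2(x))\cdot A_N^{k,\ell}(y_1,y_2),
\]
where $y_i=p_i'(x)$, $h_i(x)$ is the fiber coordinate of $p_i(x)$, and
\[
A_N^{k,\ell}(y_1,y_2)=\frac{1}{N}\sum_{n=0}^{N-1} f_1(S^ny_1)\alpha_{1,k}(\rho_1^{(n)}(y_1))\,f_2(T^ny_2)\alpha_{2,\ell}(\rho_2^{(n)}(y_2)).
\]
By Theorem \ref{Cauchy}(4)--(6) applied to the triple-magic system $X$, together with the inclusions $\Z_{S,R}(X)\subseteq Y_1$ and $\Z_{T,R}(X)\subseteq Y_2$, the $L^2(\mu)$-norm of the whole expression tends to zero.

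The main obstacle is to upgrade this $L^2$-vanishing to $\mu$-a.e.\ vanishing. My plan is to apply a van der Corput estimate to $A_N^{k,\ell}$: expanding $|A_N^{k,\ell}|^2$ with shift $h$ produces Cesàro averages of products of the form $f_1(S^ny_1)\overline{f_1(S^{n+h}y_1)}\cdot\alpha_{1,k}(\rho_1^{(n)}(y_1))\overline{\alpha_{1,k}(\rho_1^{(n+h)}(y_1))}$ (and similarly for $y_2$), where by the cocycle identity the combination $\rho_1^{(n)}(y_1)^{-1}\rho_1^{(n+h)}(y_1)=\rho_1^{(h)}(S^ny_1)$ depends only on $h$ and the $S$-orbit in $Y_1$. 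After these manipulations the averages involve only functions on $Y_1\times Y_2$ (with an inner Cesàro in $n$ and an outer Cesàro in $h$), and the hypothesis applies to yield pointwise vanishing for a.e.\ $y_1,y_2$. Running this reduction uniformly over a countable dense family of matrix-coefficient data $(\sigma,f_i,\psi_i)$ produces a single exceptional null set that works for all components; combined with the pointwise convergence of the trivial-component product, this completes the proof. Executing the van der Corput reduction so that the pointwise character of the estimate is preserved --- allowing the a.e.\ quantifiers to be exchanged freely across the double Cesàro --- is where I expect the technical heart of the argument to lie.
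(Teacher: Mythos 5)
Your Peter--Weyl plus van der Corput strategy is genuinely different from the paper's, but there are gaps I do not see how to close.

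The most serious one is the assertion that, after van der Corput and the cocycle identity, ``the averages involve only functions on $Y_1\times Y_2$.'' That is true only when the fiber groups $H_i$ are abelian. For a non-abelian compact group, a matrix coefficient $\alpha$ of an irrep $\sigma$ does not satisfy $\alpha(g)\overline{\alpha(ug)}=(\text{function of }u\text{ alone})$. Expanding $\alpha(ug)=\sum_j c_j(u)\alpha_j(g)$, the cocycle identity $\rho^{(n+h)}(y)=\rho^{(h)}(S^ny)\,\rho^{(n)}(y)$ leaves residual products $\alpha_j\bigl(\rho^{(n)}(y)\bigr)\overline{\alpha_k\bigl(\rho^{(n)}(y)\bigr)}$ --- matrix coefficients of $\sigma\otimes\bar\sigma$ evaluated at the cocycle --- and these are functions on $X_1$, not on $Y_1$. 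Characters multiply in the abelian case, but the isometric extensions in the statement are allowed arbitrary compact fibers, and I do not see a terminating iteration for the non-abelian case.

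Second, even granting the abelian reduction, the upgrade from $L^2$-vanishing to a.e.\ vanishing is not delivered by van der Corput together with the hypothesis. The pointwise van der Corput bound gives
\[
\limsup_{N\to\infty}\bigl|A_N(x)\bigr|^2 \ \leq\ \limsup_{H\to\infty}\frac{1}{H}\sum_{h=0}^{H-1}\bigl|L_h(x)\bigr|,
\]
where $L_h$ is the a.e.\ limit of the $n$-average at shift $h$ (which exists by hypothesis). The Host seminorm estimate gives $\|L_h\|_{L^1(\mu)}\to0$; together with $\|L_h\|_\infty\le1$ this controls, via Fatou, only the $\liminf$ of the Ces\`aro-in-$h$ average, not the $\limsup$. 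Concluding that $\limsup_H\frac1H\sum_h|L_h(x)|=0$ a.e.\ requires a quantitative or Borel--Cantelli argument that is absent. You flag this exchange of a.e.\ quantifiers as the technical heart of the argument, rightly so, but the tools you have assembled do not carry it out. A smaller point: $\phi_\sigma(h)=\dim(\sigma)\overline{\chi_\sigma(h)}$ is not $\mathbb{R}_+$-valued and hence is not a \emph{weight} in the sense of the definition preceding Remark \ref{ConditionPhi}; the paper's weighted-expectation machinery uses nonnegativity in an essential way (in particular for absolute continuity of the resulting measures).

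The paper's own proof takes a measure-theoretic route that avoids all of this. After passing to topological models (Theorem \ref{Weiss}), it analyzes a weak$^*$ subsequential limit $\mu_x'$ of the empirical measures $\frac1N\sum_{n<N}\delta_{(S^np_1x,\,T^np_2x)}$ on $X_1\times X_2$. Using genuinely nonnegative weights supported near the identity in $H_i$, it forms $(S\times T)$-invariant measures $\mu_{x,\phi_1,\phi_2}$ that are absolutely continuous with respect to the ergodic measure $\mu_x$ supplied by Proposition \ref{decm}; ergodicity forces $\mu_{x,\phi_1,\phi_2}=\mu_x$, and concentrating the weights at the identity (with Borel--Cantelli controlling the exceptional null set over a countable dense family of continuous functions) yields $\mu_x'=\mu_x$. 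The pointwise statement is then lifted by telescoping and density. This sidesteps both Peter--Weyl and van der Corput and works uniformly for arbitrary compact fiber groups.
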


\begin{proof}
By Theorem \ref{Weiss}, we may assume that $(X_1,S,T)$, $(X_2,S,T)$, $(Y_1,S,T)$ and $(Y_2,S,T)$ are topological dynamical systems ({\em i.e.} the transformations are continuous) and $\pi_i\colon X_i\to Y_i$, $i=1,2$ is continuous. Note that we cannot assume that the system $X_i$ has the form $Y_i\times_{\rho_i} H_i$. However, there is certainly a measure preserving isomorphism $\Phi\colon X_i\to Y_i\times_{\rho_i} H_i$, which is sufficient for our purposes.

By hypothesis we can find $X'\subseteq X$ with $\mu(X')=1$  such that $$\frac{1}{N} \sum_{i=0}^{N-1} (S^i\times T^i) \delta_{(p_1'x, p_2'x)}$$ converges weakly to $\nu_x \in M(Y_1\times Y_2)$ for all $x\in X'$.   

Let $x\in X'$ and $\mu_x'\in M(X_1\times X_2)$ be any weak limit of $\frac{1}{N} \sum_{i=0}^{N-1} (S^i\times T^i) \delta_{(p_1x, p_2x)}$. Since the transformations $S$ and $T$ are continuous, we have that $\mu_x'$ is invariant under $S\times T$. 

The strategy of the proof is as follows: in the first part, we show that $\mu_x'$ equals to $\mu_x$ (and thus $\frac{1}{N} \sum_{i=0}^{N-1} (S^i\times T^i) \delta_{(p_1x, p_2x)}$ converges weakly to $\mu_x$) in a subset of $X'$ of full measure. Then in the second part, we show that this property allows to lift the pointwise convergence. 

{\bf First part: $\mu_x'=\mu_x$.}

We start with remarking that if $f_{i}$, $f'_{i}\in L^{\infty}(\mu_{i})$ and $\Vert f_{i}\Vert_{L^{\infty}(\mu_{i})},\Vert f'_{i}\Vert_{L^{\infty}(\mu_{i})}\leq 1$ for $i=1,2$, then the telescoping inequality and the Von Neumann Theorem allow us to bound 
 \begin{equation}\label{equa0}
 \begin{split}
 \left \vert  \int_{X_{1}\times X_{2}} \left (f_1\otimes f_2-f'_1\otimes f'_2\right )d\mu'_x\right\vert \leq
 \mathbb{E}(\vert f_1-f'_1\vert\circ p_1\mid \I(S))(x)+\mathbb{E}(\vert f_2-f'_2 \vert \circ p_2\mid \I(T))(x) 
 \end{split}
 \end{equation} 
 for $\mu$-a.e. $x\in X$.

By hypothesis and the continuity of $\pi_1$ and $\pi_2$,
 we have that \[(\pi_1\times\pi_2)_{*}  \mu_x'=\nu_x\] for $\mu$-a.e $x\in X$.    
 We now consider weighted conditional expectations over $Y_i$, $i=1,2$,  given by weights $\phi_i$, $i=1,2$. Let $\mu_{x,\phi_1,\phi_2}\in M(X_1\times X_2)$ be the measure such that

\[ \int\limits_{X_1\times X_2} f_1\otimes f_2 d\mu_{x,\phi_1,\phi_2}=\int\limits_{X_1\times X_2} \mathbb{E}_{\phi_1}(f_1|\mathcal{Y}_1)\otimes \mathbb{E}_{\phi_2}(f_2\vert \mathcal{Y}_2) \mu'_x  \]  
for all $f_{i}\in L^{\infty}(\mu_{i}), i=1,2,$.
By Fubini's Theorem, the invariance of $\mu_{x}'$ under $S\times T$ and Lemma \ref{lem:WeightedConditional}, we have

\begin{align*}
 & \quad\int\limits_{X_1\times X_2}  f_1\circ S\otimes f_2\circ T d\mu_{x,\phi_1,\phi_2} =\int\limits_{X_1\times X_2}\mathbb{E}_{\phi_1}(f_1\circ S\vert \mathcal{Y}_1)\otimes \mathbb{E}_{\phi_2}(f_2\circ T\vert \mathcal{Y}_2)\mu'_x\\
& = \int\limits_{H_1\times H_2} \int\limits_{X_1\times X_2} \Bigl( f_1\circ h_1\circ S \otimes f_2\circ h_2\circ T d\mu'_x \Bigr ) \phi_1(h_1)\phi_2(h_2) dh_1dh_2 \\
& = \int\limits_{X_1\times X_2}\int\limits_{ H_1\times H_2} f_1\circ h_1 \phi_1(h_1)\otimes f_2\circ h_2 \phi_2(h_2)dm_1(h_1)dm_2(h_2)d\mu'_x\\
&= \int\limits_{X_1\times X_2}\mathbb{E}_{\phi_1}(f_1\vert \mathcal{Y}_1)\otimes \mathbb{E}_{\phi_2}(f_2\vert \mathcal{Y}_2)\mu'_x=\int\limits_{X_1\times X_2}  f_1\otimes f_2 d\mu_{x,\phi_1,\phi_2}.
 \end{align*}   
So $\mu_{x,\phi_1,\phi_2}$ is $(S\times T)$-invariant. 
On the other hand, by the fact that $(\pi_1\times \pi_2)_{\ast}\mu_x'=\nu_x$, we have

\begin{align*}
\left \vert \int_{X_1\times X_2}  f_1\otimes f_2 d\mu_{x,\phi_1,\phi_2} \right \vert & \leq \|\phi_1\|_{\infty}\|\phi_2\|_{\infty} \int\limits_{X_1\times X_2\times H_1\times H_2} \vert f_1 \vert \circ h_1\otimes \vert f_2\vert \circ h_2 dm_1(h_1)dm_2(h_2)d\mu'_x \\
& = \|\phi_1\|_{\infty}\|\phi_2\|_{\infty} \int\limits_{X_1\times X_2}  \mathbb{E}(\vert f_1\vert \mid {Y}_{1})\circ \pi_1 \otimes \mathbb{E}(\vert f_2\vert \mid {Y}_{2})\circ \pi_2  d\mu'_x \\
& \leq \|\phi_1\|_{\infty}\|\phi_2\|_{\infty} \int\limits_{Y_1\times Y_2}  \mathbb{E}(|f_1|\mid {Y}_{1})\otimes \mathbb{E}(|f_2|\mid {Y}_{2}) d\nu_x \\
& = \|\phi_1\|_{\infty}\|\phi_2\|_{\infty} \int\limits_{X_1\times X_2}  |f_1|\otimes |f_2| d\mu_x,
\end{align*}
where the last equality follows from Lemma \ref{dp}. So we get that $\mu_{x,\phi_1,\phi_2}$ is also absolutely continuous with respect to $\mu_x$. Since $\mu_x$ is $(S\times T)$-ergodic, we conclude that $\mu_{x,\phi_1,\phi_2}=\mu_x$. 


Remember that for $i=1,2$, the topology of $H_i$ is that of weak convergence in measure. This implies that for any $f_i\in L^{\infty}(\mu_i)$ and $\epsilon>0$, if $h_i\in H_i$ is close enough to the identity then $\|f_i-f_i\circ h_i\|_{L^1(\mu_i)}\leq \epsilon$. This fact concerns only the measure and not the topology on $X_i$, $i=1,2$, which gives us the liberty to choose the most suitable topological model for the isometric extension in the beginning of the proof.

Let $\{(f_{1,k},f_{2,k}): k\in \mathbb{N} \}$ be a countable set of continuous functions included and dense in the unit ball of $C(X_{1})\times C(X_{2})$ (with respect to the supremum norm).
 For $k\in \mathbb{N}$, let $B_{k,n}\subseteq H_1\times H_2$ be a ball centered at the origin such that $(h_1,h_2)\in B_{k,n}$ implies that $\|f_{1,k}-f_{1,k}\circ h_1 \|_{L^1(\mu_1)}\leq 2^{-n}$ and $\|f_{2,k}-f_{2,k}\circ h_2 \|_{L^1(\mu_2)}\leq 2^{-n}$. 
{ Let $(\phi_1^{k,n},\phi_2^{k,n})$ be a sequence of pairs weighted functions whose support is included in $B_{k,n}$ (the condition on the support can always be satisfied, we refer to Proposition 6.3 in \cite{HSY}). Define the functions \[ F_{1,k,n}=\int_{H_1} |f_{1,k}-f_{1,k}\circ h_1| \phi_1^{k,n}(h_1) dm_1(h_1) \text{ and } F_{2,k,n}=\int_{H_2} |f_{2,k}-f_{2,k}\circ h_2| \phi_2^{k,n}(h_2) dm_2(h_2)\] and let
  \[F_{k,n}(x)= \mathbb{E}(F_{1,k,n} \mid \I(S))(x)\ + \mathbb{E}(F_{2,k,n} \mid \I(T))(x).\] 

Let $E_{k,n,i}$ denote the set of $x\in X$ such that $F_{k,n}(x)>\frac{1}{i}$. By the Markov inequality, the measure of $E_{k,n,i}$ is at most
\begin{align*} 
i \int_{X} \int_{H_1} |f_{1,k}-f_{1,k}\circ h_1|\circ p_1(x) \phi_1^{k,n}(h_1) dm(h_1) d\mu(x) \\
+ i \int_{X} \int_{H_2} |f_{2,k}-f_{2,k}\circ h_2|\circ p_2(x)\phi_2^{k,n}(h_2)dm(h_2)d\mu(x)  ,
\end{align*} 
which by Fubini's Theorem equals to
\[
i \left( \int_{H_1} \|f_{1,k}-f_{1,k}\circ h_1\|_{L^1(\mu_1)} \phi_1^{k,n}(h_1) dm(h_1) + \int_{H_2} \|f_{2,k}-f_{2,k}\circ h_2\|_{L^1(\mu_2)}\phi_2^{k,n}(h_2)  dm(h_2) \right)
.\]
By definition of $\phi_1^{k,n}$ and $\phi_2^{k,n}$ this last term is  bounded by $i\cdot2^{-n+1}$.
By the Borel-Cantelli Lemma, $$\mu(\limsup_{n} E_{k,n,i})=0.$$ Denote $X''=X'\bigcap_{k,i\in \mathbb{N}}(\limsup_{n} E_{k,n,i})^c$. Then $\mu(X'')=1$. 

On the other hand, we have that

\begin{align*}
&\Bigl \vert \int\limits_{X_{1}\times X_{2}}  f_{1,k}\otimes f_{2,k} d\mu'_x-\int\limits_{X_{1}\times X_{2}}  f_{1,k}\otimes f_{2,k} d\mu'_{x,\phi_1^{k,n},\phi_2^{k,n}} \Bigr\vert \\
= &\Bigl \vert \int\limits_{H_1\times H_2} \Bigl (   \int\limits_{X_{1}\times X_{2}}  f_{1,k}\otimes f_{2,k} d\mu'_x-\int\limits_{X_{1}\times X_{2}}  f_{1,k}\circ h_1\otimes f_{2,k}\circ h_2 d\mu'_{x}\Bigr )\phi_1^{k,n}(h_1)\phi_2^{k,n}(h_2)dm_1(h_1)dm_2(h_2) \Bigr \vert\\
\leq &  \int\limits_{H_1\times H_2} \Bigl ( \int\limits_{X_1\times X_2} \Bigl \vert f_{1,k}\otimes f_{2,k} -f_{1,k}\circ h_1\otimes f_{2,k}\circ h_2 \Bigr \vert d\mu'_{x} \Bigr ) \phi_1^{k,n}(h_1)\phi_2^{k,n}(h_2)dm_1(h_1)dm_2(h_2) \\
\leq &  \int\limits_{X_1\times X_2} \Bigl ( \int\limits_{H_1} \vert f_{1,k}-f_{1,k}\circ h_1\vert\phi_1^{k,n}(h_1) dm_1(h_1) +  \int_{H_2} \vert f_{2,k}-f_{2,k}\circ h_2\vert \phi_2^{k,n}(h_2)dm_2(h_2) \Bigr )d\mu'_{x}  
\end{align*} 

By the Von Neumann Theorem, there exists a subset $X'''\subseteq X''$ of full measure such that for any $n, k\in \mathbb{N}$, the last expression is bounded by 
\begin{align*}
\mathbb{E}(F_{1,k,n} \mid \I(S))(x)\ + \mathbb{E}(F_{2,k,n} \mid \I(T))(x)=F_{k,n}(x).
\end{align*}

Let $x\in X'''$ and $i\in \mathbb{N}$. By the definition of $X'''$, there exists $N\in \mathbb{N}$ such that $F_{k,n}(x)\leq \frac{1}{i}$ for all $n\geq N$.
Since $i$ is arbitrary, we get that for $x\in X'''$, $$\int_{X_{1}\times X_{2}} f_{1,k}\otimes f_{2,k}d\mu'_{x}=\lim_{n\to\infty}\int_{X_{1}\times X_{2}} f_{1,k}\otimes f_{2,k} d\mu'_{x,\phi_1^{k,n},\phi_2^{k,n}}=\int_{X_{1}\times X_{2}} f_{1,k}\otimes f_{2,k}d\mu_{x}.$$ Since $f_{1,k}$ and $f_{2,k}$ are arbitrary in the dense family, we get that the result is also true for all continuous functions bounded by 1 and thus for all continuous functions.  Therefore $\mu'_{x}=\mu_{x}$ and the first part is proved.}

{\bf Second part: lifting the pointwise convergence.}

We now prove that the fact $\mu_{x}'=\mu_x$, $\mu$-a.e. $x\in X$ allows us to lift the pointwise convergence. Let $f_i \in L^{\infty}(\mu_i)$ $i=1,2$ and assume without loss of generality that their $L^{\infty}$ norms are bounded by 1. If $f_{i}\in C(X_{i}), i=1,2$, then by the definition of $\mu'_{x}$, the average
 $$\frac{1}{N}\sum_{i=0}^{N-1}f_1(S^ip_1 x)f_2(T^ip_2 x)$$ converges to $\int f_{1}\otimes f_{2} d\mu_{x}$ for $\mu$-a.e. $x\in X$.
Now let $f_i\in L^{\infty}(\mu_i)$ $i=1,2$ and $\widehat{f}_i\in C(X_i)$ be functions bounded by 1. 
Again by (\ref{equa0}), we have
\begin{equation}\label{equa1}
\begin{split}
\left \vert  \int\limits_{X_{1}\times X_{2}} \left (f_1\otimes f_2-\wh{f_{1}}\otimes \wh{f_{2}}\right )d\mu_x\right\vert \leq
\mathbb{E}(\vert f_1-\wh{f_{1}}\vert\circ p_1\mid \I(S))(x)+\mathbb{E}(\vert f_2-\wh{f_{2}} \vert \circ p_2\mid \I(T))(x).
\end{split}
\end{equation}
By Birkhoff Theorem and the telescoping inequality, we have
\begin{equation}\label{equa2}
\begin{split}
&\quad\limsup_{N\to\infty}\left \vert  \frac{1}{N}\sum_{i=0}^{N-1}f_1(S^ip_1 x)f_2(T^ip_2 x)-\frac{1}{N}\sum_{i=0}^{N-1}\widehat{f}_1(S^ip_1 x)\widehat{f}_2(T^ip_2 x)\right\vert 
\\&\leq
\mathbb{E}(\vert f_1-\wh{f_{1}}\vert\circ p_1\mid \I(S))(x)+\mathbb{E}(\vert f_2-\wh{f_{2}} \vert \circ p_2\mid \I(T))(x)
\end{split}
\end{equation}
for $\mu$-a.e. $x\in X$. Since $\widehat{f}_i\in C(X_i), i=1,2$, we have
\begin{equation}\label{equa3}
\begin{split}
\lim_{N\to\infty}\frac{1}{N}\sum_{i=0}^{N-1}\wh{f_{1}}(S^ip_1 x)\wh{f_{2}}(T^ip_2 x)=\int_{X_{1}\times X_{2}} \wh{f_{1}}\otimes\wh{f_{2}} d\mu_{x}
\end{split}
\end{equation}
for $\mu$-a.e. $x\in X$. Combining (\ref{equa1}), (\ref{equa2}) and (\ref{equa3}), we have
\begin{equation}\label{equa4}
\begin{split}
&\quad\limsup_{N\to\infty}\left \vert\frac{1}{N}\sum_{i=0}^{N-1}f_{1}(S^ip_1 x)f_{2}(T^ip_2 x)-\int_{X_{1}\times X_{2}} f_{1}\otimes f_{2} d\mu_{x}\right \vert
\\&\leq 
2\Bigl(\mathbb{E}(\vert f_1-\wh{f_{1}}\vert\circ p_1\mid \I(S))(x)+\mathbb{E}(\vert f_2-\wh{f_{2}} \vert \circ p_2\mid \I(T))(x)\Bigr)
\end{split}
\end{equation}
for $\mu$-a.e. $x\in X$.
For any $\epsilon>0$, let  \[E_{\epsilon}=\left \{x \in X\colon \limsup_{N\to \infty}\left \vert \frac{1}{N}\sum_{i=0}^{N-1}f_1(S^ip_1 x)f_2(T^ip_2 x)- \int_{X_{1}\times X_{2}} f_{1}\otimes f_{2} d\mu_{x} \right\vert> 2\epsilon \right \}. \] 
By \eqref{equa4}, for a countable dense set of functions $\widehat{f}_i\in C(X_i)$ $i=1,2$, we have    $$\mu(E_{\epsilon})\leq \mu(\{x\colon \mathbb{E}(\vert f_1-\widehat{f}_1\vert\circ p_1\mid \I(S))(x)\geq \epsilon \}) + \mu(\{x: \mathbb{E}(\vert f_2-\widehat{f}_2\vert\circ p_2\mid \I(T))(x)\geq \epsilon \}).$$ 

Now fix $\epsilon >0$ and let $\delta\leq \epsilon$ and $\widehat{f}_i\in C(X_i)$ with $\|\widehat{f}_i-f_i \|_{L^1(\mu_i)}, \leq {\delta^2}$ $i=1,2$. Then Markov Inequality implies that 

\[\mu(E_{\epsilon})\leq \frac{\|\widehat{f}_1-f_1 \|_{L^1(\mu_1)}}{\epsilon}+ \frac{\|\widehat{f}_2-f_2 \|_{L^1(\mu_2)}}{\epsilon} \leq 2\delta.\]

Letting $\delta$ go to zero, we get that $\mu(E_{\epsilon})=0$. Since the set where the pointwise convergence fails is  $\bigcup_{k\in \mathbb{N}}E_{1/k}$, we get the conclusion. 
\end{proof}

\begin{proof}[Proof of Theorem \ref{THM:2}]
Since an ergodic distal system has a distal, free, ergodic, triple magic extension by Proposition \ref{Prop:DistalMagicExtension}, we may assume that $X$ is distal, free, ergodic and triple magic. Since $X$ is distal, the projections $\pi_{T}\colon X\to \Z_{S,R}(X)$ and $\pi_{S}\colon X\to \Z_{T,R}(X)$ are obviously distal. So
there exist a countable ordinal $\eta$ and a directed family of pairs of factors $(X_{\theta,1},\mu_{\theta,1}, S,T)$, $(X_{\theta,2},\mu_{\theta,2}, S,T)$, $\theta\leq\eta$ such that
\begin{itemize}
\item $X_{0,1}=\Z_{S,R}(X)$, $X_{0,2}=\Z_{T,R}(X)$, $X_{\eta,1}=X_{\eta,2}=X$;
\item For $\theta<\eta$, the extension $\pi_{\theta,i}\colon X_{\theta+1,i}\to X_{\theta,i}$ is isometric for $i=1,2$ and is not an isomorphism for at least one of $i=1,2$;
\item For a limit ordinal $\l\leq \eta$, $X_{\l,i}=\lim_{\leftarrow \theta<\l} X_{\theta,i}$, $i=1,2$.
\end{itemize}

Let $X_1$ and $X_2$ be factors of $X$ with factor maps $p_{i}\colon X\to X_{i}, i=1,2$.
We say that the pair $(X_{1},X_{2})$ is \emph{good} if
 the average
 \[\frac{1}{N} \sum_{i=0}^{N-1} f_1(S^{i} p_{1}x)f_2(T^{i} p_{2}x) \]
 converges for $\mu$-a.e. $x\in X$ as $N\to \infty$ for all $f_i\in L^{\infty}(\mu_{i}), i=1,2$. We want to show that $(X,X)$ is good. 
 
 Since all $X_{\theta,1}$ have a common magic extension $X$ and are extensions of 
$\Z_{S,R}(X)$, and all $X_{\theta,2}$ have a common magic extension $X$ and are extensions of 
$\Z_{T,R}(X)$, we conclude from Proposition \ref{liftiso} that if $(X_{\theta,1},X_{\theta,2})$ is good, so is $(X_{\theta+1,1},X_{\theta+1,2})$.

On the other hand, a standard limit argument shows that the property ``good'' is preserved by taking inverse limits. So in order to prove $(X,X)$ is good, it suffices to show that $(\Z_{S,R}(X),\Z_{T,R}(X))$ is good. 

For $W=S,T$ or $R$, let $\pi'_{W}\colon (X,\mu,S,T)\to (X_{W},\nu_{W},S,T)$ be the factor map (recall that $X_{W}$ is the factor of $X$ associated to the $\sigma$-algebra $\mathcal{I}(W)$).
Recall that the systems $(\mathcal{Z}_{S,R}(X),\mu,S,T)$ and $(\mathcal{Z}_{T,R}(X),\mu,S,T)$ are isomorphic to the systems 
$(X_{S}\times X_{R},\nu_{S}\times\nu_{R},\id\times S, T\times T)$ and $(X_{T}\times X_{R},\nu_{T}\times\nu_{R},S\times S, \id \times T)$ respectively by Lemma \ref{2p}. To prove $(\Z_{S,R}(X),\Z_{T,R}(X))$ is good, it suffices to show that  
\begin{equation}\label{tempgood}
\begin{split}
    \frac{1}{N} \sum_{i=0}^{N-1} f_{1}((\id\times S)^{i} (\pi'_{S}x,\pi'_{R}x))f_2((\id \times T)^{i} (\pi'_{T}x,\pi'_{R}x))
\end{split}
\end{equation}
converges for $\mu$-a.e. $x\in X$ as $N\to \infty$ for all $f_1, f_2\in L^{\infty}(\mu)$. By a density argument, we may assume that $f_{1}=g_{1}\otimes h_{1}, f_{2}=g_{2}\otimes h_{2}$, where $g_{1}\in L^{\infty}(\nu_{S}),g_{2}\in L^{\infty}(\nu_{T}),h_{1},h_{2}\in L^{\infty}(\nu_{R})$. In this case, (\ref{tempgood}) equals to
\begin{equation}\nonumber
\begin{split}
 g_{1}(\pi'_{S}x)g_{2}(\pi'_{T}x)\frac{1}{N}\sum_{i=0}^{N-1} h_{1}( S^{i} \pi'_{R}x)h_2(T^{i} \pi'_{R}x).
\end{split}
\end{equation}
Since $S$ and $T$ are the same action on $X_R$, the Birkhoff Theorem implies that $(\Z_{S,R}(X),\Z_{T,R}(X))$ is good. This finishes the proof.  
\end{proof}


\begin{thebibliography}{SSS}

\bibitem{Aus} T. Austin, On the norm convergence of nonconventional ergodic averages, \textit{Ergodic Theory Dynam. Systems} {\bf 30} (2010), no. 2, 321--338.



\bibitem{BK} H. Becker and A. Kechris, {\it The descriptive set theory of Polish group actions}, London Mathematical Society Lecture Note Series, 232. Cambridge University Press, Cambridge, 1996. 

\bibitem{Bo} J. Bourgain, Double recurrence and almost sure convergence. \textit{ J. Reine Angew. Math.} {\bf 404}
(1990), 140--161.


\bibitem{Chu} Q. Chu, Multiple recurrence for two commuting transformations, \textit{Ergodic Theory Dynam. Systems} {\bf  31} (2011), no. 3, 771--792.



\bibitem{DS} S. Donoso and W. Sun, A pointwise cubic average for two commuting transformations, to appear {\it Israel J. Math} arXiv:1406.1220.

\bibitem{Fu} H. Furstenberg, {\it Recurrence in ergodic theory and combinatorial number theory}, M. B. Porter Lectures, Princeton Univ. Press, Princeton, NJ, 1981.

\bibitem{Glas} E. Glasner, {\it Ergodic theory via joinings}
Mathematical Surveys and Monographs, 101. American Mathematical Society, Providence, RI, 2003.

\bibitem{HK05} B. Host and B. Kra, Nonconventional averages and
nilmanifolds, \textit{Ann. of Math. (2)} {\bf 161} (2005), no. 1, 398--488.

\bibitem{H} B. Host, Ergodic seminorms for commuting transformations and applications, \textit{ Studia Math. } {\bf 195 } (2009), no. 1, 31--49.


\bibitem{HSY} W. Huang, S. Shao and X.D. Ye, Pointwise convergence of multiple ergodic averages and strictly ergodic models, arXiv:1406.5930. 


\bibitem{J} R.I. Jewett, The prevalence of uniquely ergodic systems, \textit{J. Math. Mech.} {\bf 19} 1969/1970 717--729.

\bibitem{K} W. Krieger, On unique ergodicity, Proceedings of the Sixth Berkeley Symposium on Mathematical Statistics and Probability (Univ. California, Berkeley, Calif., 1970/1971), Vol. II: Probability theory, pp. 327--346, Univ. California Press, Berkeley, Calif., 1972.


\bibitem{Tao} T. Tao, Norm convergence of multiple ergodic averages for commuting transformations, \textit{Ergodic Theory Dynam. Systems} {\bf 28 } (2008), no. 2, 657--688.

\bibitem{Walsh} M. Walsh, Norm convergence of nilpotent ergodic averages, {\it Ann. of Math.} (2) 175 (2012), no. 3, 1667--1688.


\bibitem{W} B. Weiss, Strictly ergodic models for dynamical systems, \textit{ Bull. Amer. Math. Soc. (N.S.)} {\bf 13}
(1985), 143--146.
\end{thebibliography}
\end{document}